\pdfminorversion=4
\documentclass[10pt,a4paper]{article} 


\usepackage{import}
\usepackage{framed}
\usepackage{color}
\usepackage{a4,amsmath,amssymb,amscd,latexsym} 
\usepackage{amsthm} 
\usepackage{bbm}
\usepackage{epsfig} 
\usepackage{amsfonts}
\usepackage{mathrsfs}
\usepackage{overpic}
\usepackage{subcaption}
\usepackage{paralist}
\usepackage{graphicx}
\usepackage{trfsigns}
\usepackage{url}
\usepackage{stmaryrd}
\usepackage{booktabs}
\usepackage{algorithm, algorithmic}
\usepackage[algo2e, boxed, lined]{algorithm2e} 
\usepackage{hyperref}
\usepackage{cleveref}
\usepackage{scalerel}
\usepackage[export]{adjustbox}
\usepackage{subcaption}
\usepackage{tikz}
\usetikzlibrary{calc,trees,positioning,arrows,chains,shapes.geometric,%
	decorations.pathreplacing,decorations.pathmorphing,shapes,%
	matrix,shapes.symbols,backgrounds,shadows}
\usepackage{tikz-cd}
\usepackage{pgfplots}

\newtheorem{theorem}{Theorem}

\newtheorem{remark}{Remark}


\newcommand\fat[1]{\ThisStyle{\ooalign{%
			\kern.46pt$\SavedStyle#1$\cr\kern.33pt$\SavedStyle#1$\cr%
			\kern.2pt$\SavedStyle#1$\cr$\SavedStyle#1$}}}

\newenvironment{@abssec}[1]{%
	\vspace{.06in}\footnotesize
	\parindent=0in
	\ignorespaces 
	}

\newenvironment{keywords}{\begin{@abssec}{\keywordsname}}{\end{@abssec}}
\newenvironment{AMS}{\begin{@abssec}{AMS subject classification:}}{\end{@abssec}}

\overfullrule=0mm

\setlength{\abovecaptionskip}{1ex}
\setlength{\belowcaptionskip}{1ex}
\setlength{\floatsep}{1ex}
\setlength{\textfloatsep}{1ex}

\title{Simultaneous Shape and Mesh Quality Optimization using Pre-Shape Calculus}

\author{Daniel Luft\thanks{Trier University, Department of Mathematics, 54286 Trier, Germany (\tt luft@uni-trier.de)}
 \and Volker Schulz\thanks{Trier University, Department of Mathematics, 54286 Trier, Germany (\tt volker.schulz@uni-trier.de)}
		} 



\newcommand{\R}{{\mathbb{R}}} 
\newcommand{\Manifold}{M}
\newcommand{\HoldAll}{\mathbb{D}}
\newcommand{\Diffeomorphism}{\rho}

\newcommand{\Shape}{\Gamma}
\newcommand{\ShapeEmbedding}{\varphi}
\newcommand{\VolumeEmbedding}{\Phi}

\newcommand{\Target}{\mathcal{J}}
\newcommand{\TargetPrShp}{\mathfrak{J}}
\newcommand{\TargetShp}{\mathcal{J}}
\newcommand{\PrShpDeriv}{\mathfrak{D}}
\newcommand{\ShpDeriv}{\mathcal{D}}

\newcommand{\TangentSpaceShape}{\mathcal{T}}
\newcommand{\TangentVector}{\tau}
\newcommand{\NormalSpaceShape}{\mathcal{N}}

\newcommand{\diff}{\mathrm{d}}

\newcommand{\ProjectionCanonical}{\pi}

\newcommand{\RieszEnergyExtForce}{q}


\begin{document}
\maketitle

\begin{abstract}
	\noindent
	Computational meshes arising from shape optimization routines commonly suffer from decrease of mesh quality or even destruction of the mesh. 
	In this work, we provide an approach to regularize general shape optimization problems to increase both shape and volume mesh quality.
	For this, we employ pre-shape calculus as established in \cite{luft2020pre}.
	Existence of regularized solutions is guaranteed.
	Further, consistency of modified pre-shape gradient systems is established.
	We present pre-shape gradient system modifications, which permit simultaneous shape optimization with mesh quality improvement.
	Optimal shapes to the original problem are left invariant under regularization.
	The computational burden of our approach is limited, since additional solution of possibly larger (non-)linear systems for regularized shape gradients is not necessary.
	We implement and compare pre-shape gradient regularization approaches for a hard to solve 2D problem.
	As our approach does not depend on the choice of metrics representing shape gradients, we employ and compare several different metrics.
\end{abstract}

\begin{keywords}
	\textbf{Key words:} Shape Optimization, Mesh Quality, Mesh Deformation Method, Shape Calculus
\end{keywords}

\begin{AMS}
	\textbf{AMS subject classifications: }
	49Q10, 65M50, 90C48, 49J27
\end{AMS}

\section{Introduction}
Solutions of PDE constrained optimization problems, in particular problems where the desired control variable is a geometric shape, are only meaningful, if the state variables of the constraint can be calculated with sufficient reliability.
A key component of reliable solutions is given by the quality of the computational mesh. 
It is well-known that poor quality of elements affect the stability, convergence, and accuracy of finite element and other solvers.
This comes from effects such as poorly conditioned stiffness matrices (cf. \cite{shewchuk2002good}).

There are a multitude of strategies for increasing mesh quality, particularly in shape optimization.
The authors of \cite{etling2018first} enhance mesh morphing routines for shape optimization by trying to correct for the inexactness of Hadamard's theorem due to discretization of the problem. 
Correcting degenerate steps requires a restriction of deformation directions based on normal fields of shapes, which leads to a linear system enlarged by additional constraints.
In \cite{haubner2020continuous}, an approach to shape optimization using the method of mappings to guarantee non-degenerate deformations of meshes is presented.
For this, the shape optimization problem is regarded as an optimization in function spaces.
A penalty term for the Jacobian determinants of the deformations is added, which leads to a non-smooth optimality system.
Deformations computed by solving this system have less tendency to degenerate volumes of cells.
These techniques are related to the techniques of our work, however they do not include a mechanism to capture a target mesh volume distribution as is presented here in subsequent sections.
In \cite{onyshkevych2020mesh} metrics for representing shape gradients are modified by adding a non-linear advection term. 
For this, the shape derivative is represented on the shape seen as a boundary mesh, which is then used as a Neumann condition to assemble a system incorporating the non-linear advection term to represent a shape gradient in volume formulation. 
This formulation advects nodes of the volume mesh in order to mitigate element degeneration, but requires solution of a non-linear system to construct the shape gradient.
In \cite{schmidt2014two}, the author applies mesh smoothing inspired by centroidal Voronoi reparameterization to construct a tangential field that corrects for degenerate surface mesh cells. For correcting the volume mesh cell degeneration, a shape gradient representation featuring a non-linear advection term is used.
This is motivated by the eikonal equation, while orthogonality of the correction to shape gradients is ensured by a Neumann condition.
In order to mitigate roughness of gradients and resulting degeneration of meshes, the authors of \cite{schulz2015Steklov} construct shape gradient representations by use of Steklov-Poincar\'e metrics.
As an example they propose the linear elasticity metric, giving a more regular shape gradient representation by solution of a linear system using volume formulations of shape derivatives.
In \cite{herzog2020manifold}, the authors construct a Riemannian metric for the manifold of planar triangular meshes with given fixed connectivity, which makes the space geodesically complete. 
They propose a mesh morphing routine by geodesic flows, using the Hamiltonian formulation of the geodesic equation and solving by a symplectic numerical integrator.
Numerical experiments in \cite{herzog2020manifold} suggest that cell aspect ratios are bounded away from zero and thus avoid mesh degradation during deformations.

Several of the aforementioned approaches require modification of the metrics acting as left hand sides to represent shape gradients.
Often, either non-linear terms are added or systems are enlarged, which increases computational burden significantly.
Further, the mesh quality resulting by these regularized deformations is arbitrary or of rather implicit nature, since no explicit criterion for mesh quality is enforced.

In this work, we want to approach these two aspects differently.
By using pre-shape calculus techniques introduced in \cite{luft2020pre}, we propose regularization techniques for shape gradients with two goals in mind.
First, the required computational burden should be limited.
We achieve this by modifying the right hand sides of shape gradient systems, instead of altering the metric.
This ensures that shape gradient systems neither increase in size nor become non-linear.
Secondly, our regularization explicitly targets mesh qualities defined by the user.
To do so, we enforce surface and volume cell distribution via use of pre-shape derivatives of \emph{pre-shape parameterization tracking problems}.
Non-uniform node distributions according to targets can be beneficial, especially in the context of PDE constrained optimization.

In \cite{friederich2014adaptive} local sensitivities for minimization of the approximation error of linear elliptic second order PDE are derived and used to refine computational meshes.
Also, \cite{cao1999study} studies various monitor functions (targets) for mesh deformation methods in 2D by using elliptic and eigenvalue methods, e.g. to ensure certain coordinate lines of the mesh.
Amongst other examples, this shows possible demand for targeted non-uniform adaptation of computational meshes.
Non-uniform cell distributions are possible with our approach
our pre-shape calculus techniques enable efficient routines solving shape optimization problems, which simultaneously optimize quality of the surface mesh representing the shape and the volume mesh representing the hold-all domain.
This is done in a manner that does not interfere with the original shape optimization problem, leaving optimal and intermediate shapes invariant.
An complementing literature review is found in the introduction of \cite{luft2020pre}.
For this work we build on results of \cite{luft2020pre}, where pre-shape calculus is rigorously established.
The results from \cite{luft2020pre} feature a structure theorem in the style of Hadamard's theorem for shape derivatives, which also shows the relationship of classical shape and pre-shape derivatives.
In particular, shape calculus and pre-shape differentiability results carry over to the pre-shape setting.
Further, the pre-shape parameterization tracking problem is introduced in \cite[Prop. 2, Thrm. 2]{luft2020pre}, where existence of solutions is proved and a closed formula for its pre-shape derivative is derived.

This work is structured in 2 sections.
First, in \cref{Section_TheoryPreShapeRegularization} we establish theoretical results for pre-shape regularization routines for shape and volume mesh quality.
In particular, \cref{SubSection_TangentialMeshQOpt} focuses on the regularization for shape mesh quality, while \cref{SubSection_VolumeMeshQOpt} we take care of volume mesh quality regularization for the hold-all domain.
For both cases, existence results of regularized pre-shape solutions are provided.
Also, regularized pre-shape gradient systems are formulated, and consistency of regularized and unregularized gradients is established.
In \cref{Section_ImplementationPreShapeRegu} we display our techniques for a model problem.
Several different (un-)regularized routines are tested.
As a standard approach, we recommend the linear elasticity metric as found in \cite{SchulzSiebenborn}.
To illustrate the general applicability of our regularization method, we also test the very reasonable $p$-Laplacian metrics as inspired by \cite{hinze2021pLap} to represent gradients.

\section{Theoretical Aspects of Regularization by Pre-Shape Parameterization Tracking}\label{Section_TheoryPreShapeRegularization}
The main goal of this paper is to introduce a regularization approach to shape optimization, which increases mesh quality during optimization at minimal additional computational cost.
To achieve this, we proceed in two steps.
First, in \cref{SubSection_TangentialMeshQOpt}, we analyze the case where quality of the mesh modeling a shape is optimized.
This amounts to increasing quality of the (hyper-)surface shape mesh embedded in the volume hold-all domain.
Then, in \cref{SubSection_VolumeMeshQOpt}, we build on the surface mesh case by also demanding to increase the volume mesh quality of the hold-all domain.
Both approaches need to satisfy two properties.
On the one hand, they must not interfere with the original shape optimization problem, i.e. leave the optimal shape or even intermediate shapes invariant.
On the other hand, to be practically feasible, the mesh quality regularization approach should not increase computational cost.
In particular, no additional solution of linear systems should be necessary if compared to standard shape gradient descent algorithms.
We will achieve both properties for the case of simultaneous shape and volume mesh quality optimization.

If not stated otherwise, we assume $\Manifold \subset \R^{n+1}$ to be an $n$-dimensional, orientable, path-connected and compact  $C^\infty$-manifold without boundary.
In light of \cite{luft2020pre}, we will assume  $\operatorname{Emb}(\Manifold, \R^{n+1})$ to be the space of pre-shapes and $B_e^n := \operatorname{Emb}(\Manifold, \R^{n+1})/\operatorname{Diff}(\Manifold)$ to be the space of shapes.

\subsection{Simultaneous Tangential Mesh Quality and Shape Optimization}\label{SubSection_TangentialMeshQOpt}
In this subsection we formulate a regularized shape optimization problem to track for desired shape mesh quality using pre-shapes calculus.
As mentioned in the introduction, we heavily build on the results of \cite{luft2020pre}.

\subsubsection{Preliminaries for Mesh Quality Optimization using Pre-Shapes}
Throughout \cref{Section_TheoryPreShapeRegularization}, we take a look at a general prototype shape optimization problem 
\begin{equation}\label{Eq_GeneralShapeProblem}
	\underset{\Shape\in B_e^n}{\min}\TargetShp(\Shape).
\end{equation}
Here we only assume that the shape functional $\TargetShp: B_e^n \rightarrow \R$ is first order shape differentiable.

Now we reformulate \cref{Eq_GeneralShapeProblem} in the context of pre-shape optimization by use of the canonical projection $\ProjectionCanonical: \operatorname{Emb}(\Manifold, \R^{n+1}) \rightarrow B_e^n$.
The canonical projection $\ProjectionCanonical$ maps each pre-shape $\ShapeEmbedding\in \operatorname{Emb}(\Manifold, \R^{n+1})$ to an equivalence class $\ProjectionCanonical(\ShapeEmbedding) = \Shape \in B_e^n$, which consists of all pre-shapes mapping onto the same image shape in $\R^{n+1}$.
We remind the reader that we can associate $\ProjectionCanonical(\ShapeEmbedding)$ with the set interpretation $\ShapeEmbedding(\Manifold)$ of the shape $\Shape$.
So $\ProjectionCanonical(\ShapeEmbedding)$ can be interpreted as the set of all parameterizations of a given shape $\Shape\in B_e^n$ in $\R^{n+1}$.
The pre-shape formulation of \cref{Eq_GeneralShapeProblem} takes the form
\begin{equation}\label{Eq_GeneralShapeProblemPrShpExt}
	\underset{\ShapeEmbedding\in \operatorname{Emb}(\Manifold, \R^{n+1})}{\min}(\TargetShp\circ\ProjectionCanonical)(\ShapeEmbedding).
\end{equation}
It is important to notice that the extended target functional of \cref{Eq_GeneralShapeProblemPrShpExt} is pre-shape differentiable in the sense of \cite[Definition 3]{luft2020pre}, since we assumed $\TargetShp$ to be shape differentiable (cf. \cite[Prop. 1]{luft2020pre}).

Next, we need the pre-shape parameterization tracking functional introduced in \cite[Prop. 2]{luft2020pre}.
In the discrete setting, this functional allows us to track for given desired sizes of surface and volume mesh cells.
For this, let us assume functions $g^\Manifold: \Manifold \rightarrow (0,\infty)$ and $f^\Manifold_\ShapeEmbedding: \ShapeEmbedding(\Manifold) \rightarrow (0,\infty)$ to be smooth and fulfill the normalization condition
\begin{equation}\label{Assumption_fgNormalization}
	\int_{\ShapeEmbedding(\Manifold)} f^\Manifold_\ShapeEmbedding(s) \; \diff s
	= 
	\int_\Manifold g^\Manifold(s)\; \diff s  
	\quad \forall \ShapeEmbedding\in \operatorname{Emb}(\Manifold, \R^{n+1}).
\end{equation}
Further, we assume $f$ to have shape functionality (cf. \cite[Definition 2]{luft2020pre}), i.e.
\begin{equation}\label{Assumption_ShapeFunctionality}
	f^\Manifold_{\ShapeEmbedding\circ\Diffeomorphism} = f^\Manifold_{\ShapeEmbedding} \quad \forall \Diffeomorphism\in \operatorname{Diff}(\Manifold).
\end{equation}
Shape functionality for $f^\Manifold$ simply means, that $f^\Manifold_{\varphi_1} = f^\Manifold_{\varphi_2}$ for all pre-shapes $\varphi_1, \varphi_2 \in \ProjectionCanonical(\ShapeEmbedding)$ corresponding to the same shape $\Shape = \ProjectionCanonical(\ShapeEmbedding)$.

Finally, the pre-shape parameterization tracking problem takes the form
\begin{equation}\label{Eq_FinalGeneralParamTrackingProblem}
	\underset{\ShapeEmbedding \in \operatorname{Emb}(\Manifold, \R^{n+1})}{\min} \frac{1}{2}\int_{\ShapeEmbedding(\Manifold)}
	\Big(
	g^\Manifold \circ\ShapeEmbedding^{-1}(s) \cdot \det D^\tau \ShapeEmbedding^{-1}(s)
	-
	f^\Manifold_{\ShapeEmbedding}(s) 
	\Big)^2 \; \diff s =: \TargetPrShp^\tau(\ShapeEmbedding)
\end{equation}
where $D^\tau$ is the covariant derivative.
For each shape $\Shape\in B_e^n$ there exists a pre-shape $\ShapeEmbedding \in \ProjectionCanonical(\ShapeEmbedding)=\Shape$ minimizing  \cref{Eq_FinalGeneralParamTrackingProblem}, which is guaranteed by \cite[Prop. 2]{luft2020pre}.

As we need the pre-shape derivative of the parameterization tracking functional $\TargetPrShp^\tau$ in order to devise our algorithms, we state it in the style of the pre-shape derivative structure theorem \cite[Thrm. 1]{luft2020pre}
\begin{equation}\label{Eq_PreShapeDerSplitting}
	\PrShpDeriv\TargetPrShp^\tau(\ShapeEmbedding)[V] = \langle g_\ShapeEmbedding^\NormalSpaceShape, V \rangle + \langle g_\ShapeEmbedding^\TangentSpaceShape, V \rangle \quad \forall V\in C^\infty(\R^{n+1}, \R^{n+1}),
\end{equation}
with normal/ shape component
\begin{align}\label{Eq_PreShapeDerTrackingNormal}
\begin{split}
	\langle g_\ShapeEmbedding^\NormalSpaceShape, V \rangle 
	= 
	&-\int_{\ShapeEmbedding(\Manifold)} \frac{\operatorname{dim}(\Manifold)}{2}\cdot\Big(\big(g^\Manifold\circ\ShapeEmbedding^{-1}\cdot \det D^\tau \ShapeEmbedding^{-1}\big)^2 - f_{\ShapeEmbedding}^2\Big)
	\cdot\kappa\cdot \langle V, n\rangle 
	\\
	&\qquad\qquad\quad+
	\Big(g^\Manifold\circ\ShapeEmbedding^{-1}\cdot \det D^\tau \ShapeEmbedding^{-1} - f_\ShapeEmbedding\Big)
	\cdot
	\Big(
	\frac{\partial f_\ShapeEmbedding}{\partial n} \cdot \langle V, n \rangle + \ShpDeriv(f_\ShapeEmbedding)[V]
	\Big) \;\diff s 
\end{split}
\end{align}
and tangential/ pre-shape component
\begin{align}\label{Eq_PreShapeDerTrackingTangential}
\begin{split}
	\langle g_\ShapeEmbedding^\TangentSpaceShape, V \rangle 
	= 
	&-\int_{\ShapeEmbedding(\Manifold)} \frac{1}{2}\cdot\Big(\big(g^\Manifold\circ\ShapeEmbedding^{-1}\cdot \det D^\tau \ShapeEmbedding^{-1}\big)^2 - f_{\ShapeEmbedding}^2\Big)
	\cdot\operatorname{div}_\Shape (V - \langle V, n \rangle \cdot n) 
	\\
	&\qquad\qquad\quad+
	\Big(g^\Manifold\circ\ShapeEmbedding^{-1}\cdot \det D^\tau \ShapeEmbedding^{-1} - f_\ShapeEmbedding\Big)
	\cdot
	\nabla_\Shape f_\ShapeEmbedding^T V \;\diff s.
\end{split}
\end{align}
Notice that we signify a duality pairing by $\langle \cdot, \cdot \rangle$, and with $\ShpDeriv(f_\ShapeEmbedding)[V]$ the classical shape derivative of $f_\ShapeEmbedding$ in direction $V$.
For the derivation of the pre-shape derivative $\PrShpDeriv\TargetPrShp^\tau$ by use of pre-shape calculus, we refer the reader to \cite[Thrm. 2, Cor. 2]{luft2020pre}.

\subsubsection{Regularization of Shape Optimization Problems by Shape Parameterization Tracking}
All ingredients necessary are now available to formulate a regularized version of a shape optimization problem.
For $\alpha^\tau>0$, we add the parameterization tracking functional in style of a regularizing term to pre-shape reformulation \cref{Eq_GeneralShapeProblemPrShpExt}
\begin{equation}\label{Eq_GeneralPreShapeProblemTangential}
	\underset{\ShapeEmbedding\in \operatorname{Emb}(\Manifold, \R^{n+1})}{\min}(\TargetShp\circ\ProjectionCanonical)(\ShapeEmbedding) + \alpha^\tau\cdot \TargetPrShp^\tau(\ShapeEmbedding).
\end{equation}
We already found that the pre-shape extension \cref{Eq_GeneralShapeProblemPrShpExt} of our initial shape optimization problem is pre-shape differentiable.
Hence \cref{Eq_GeneralPreShapeProblemTangential} is pre-shape differentiable as well.
This is foundational, since a pre-shape gradient system for the regularized problem \cref{Eq_GeneralPreShapeProblemTangential} can always be formulated.

The pre-shape derivative $\PrShpDeriv\TargetPrShp(\ShapeEmbedding)[V]$ of the parameterization tracking problem (cf. \cref{Eq_PreShapeDerSplitting}) is well-defined for weakly differentiable directions $V\in H^1(\R^{n+1}, \R^{n+1})$.
By assuming the same for the shape derivative $\ShpDeriv\TargetShp$ of the original problem \cref{Eq_GeneralShapeProblem}, we can create a pre-shape gradient system for \cref{Eq_GeneralPreShapeProblemTangential} using a weak formulation with $H^1$-functions.
Given a symmetric, positive-definite bilinear form $a(.,.)$ (e.g. \cite{SchulzSiebenborn}, \cite{hinze2021pLap}), such a system takes the form
\begin{equation}\label{Eq_GradientSystemTangFull}
	a(U^{\TargetShp+\TargetPrShp^\tau}, V) = 	
	\ShpDeriv\TargetShp(\Shape)[V]
	\;+\;
	\alpha^\tau\cdot\langle g_\ShapeEmbedding^\NormalSpaceShape, V \rangle 	
	\;+\;
	\alpha^\tau\cdot\langle g_\ShapeEmbedding^\TangentSpaceShape, V \rangle \quad \forall V\in H^1(\R^{n+1}, \R^{n+1}).
\end{equation}
With $\Shape = \ProjectionCanonical(\ShapeEmbedding)$, the right hand side of \cref{Eq_GradientSystemTangFull} is indeed the full pre-shape gradient of \cref{Eq_GeneralPreShapeProblemTangential}.
This stems from the fact that the pre-shape extension $\TargetShp\circ\ProjectionCanonical$ has shape functionality by construction, which makes the pre-shape derivative $\PrShpDeriv(\TargetShp\circ\ProjectionCanonical)$ equal to the shape derivative $\ShpDeriv\TargetShp$ by application of \cite[Thrm. 1 $(iii)$]{luft2020pre}.

The full pre-shape gradient system \cref{Eq_GradientSystemTangFull} already achieves simultaneous solution of the shape optimization problem and regularization of shape mesh quality.
Namely, it is not required to additionally solve linear systems to create a mesh quality regularized descent direction $U^{\TargetShp+\TargetPrShp^\tau}$, since the original shape gradient system to problem \cref{Eq_GeneralShapeProblem} is modified by adding the two force terms $g^\NormalSpaceShape$ and $g^\TangentSpaceShape$.
A calculation of a descent direction $U^{\TargetShp+\TargetPrShp^\tau} = U^\TargetShp + U^{\TargetPrShp^\tau}$ by combining the shape gradient $U^\TargetShp$ and pre-shape gradient $U^{\TargetPrShp^\tau}$  solving the decoupled systems
\begin{equation}\label{Eq_GradientSystemOriginal}
	a(U^\TargetShp, V) = \ShpDeriv\TargetShp(\Shape)[V] \quad \forall V\in H^1(\R^{n+1}, \R^{n+1})
\end{equation}
and
\begin{equation}\label{Eq_GradientSystemParamTracking}
	a(U^{\TargetPrShp^\tau}, V) = 	
	\alpha^\tau\cdot\langle g_\ShapeEmbedding^\NormalSpaceShape, V \rangle 	
	+
	\alpha^\tau\cdot\langle g_\ShapeEmbedding^\TangentSpaceShape, V \rangle \quad \forall V\in H^1(\R^{n+1}, \R^{n+1})
\end{equation}
is in fact equivalent to solution of \cref{Eq_GradientSystemTangFull}.

However, by using \cref{Eq_GradientSystemTangFull} for gradient based optimization, the first demanded property of leaving the optimal or intermediate shapes invariant is not true in general.
This issue comes from involvement of the shape component $g^\NormalSpaceShape_\ShapeEmbedding$ (cf. \cref{Eq_PreShapeDerTrackingNormal}) of the pre-shape derivative to the parameterization tracking functional $\TargetPrShp^\tau$.
It is acting solely in normal directions, therefore altering shapes by interfering with the shape derivative $\ShpDeriv\TargetShp$ of the original problem.

For this reason, we deviate from the full gradient system \cref{Eq_GradientSystemTangFull} consistent with the regularized problem \cref{Eq_GeneralPreShapeProblemTangential} by using a modified system
\begin{equation}\label{Eq_GradientSystemTangTang}
	a(\tilde{U}, V) = 	
	\ShpDeriv\TargetShp(\Shape)[V]	
	+
	\alpha^\tau\cdot\langle g_\ShapeEmbedding^\TangentSpaceShape, V \rangle \quad \forall V\in H^1(\R^{n+1}, \R^{n+1}).
\end{equation}
We project the pre-shape derivative $\PrShpDeriv\TargetPrShp^\tau$ onto its tangential part, which is realized by simply removing the shape component $g^\NormalSpaceShape$ from the right hand side of the gradient system.
By this we still have numerical feasibility, while also achieving invariance of optimal and intermediate shapes. 
Of course invariance is only true up to discretization errors.
From a traditional shape optimization perspective, this stems from considering \cref{Eq_GradientSystemTangTang} as a shape gradient system with additional force term acting on directions $V$ in the kernel of the classical shape derivative $\ShpDeriv\TargetShp$.
In particular, by Hadamard's theorem or the structure theorem for pre-shape derivatives \cite[Thrm. 1]{luft2020pre}, directions tangential to shapes $\Shape$ are in the kernel of $\ShpDeriv\TargetShp$.
Using the pre-shape setting, we interpret these directions as vector fields corresponding to the fiber components of $\operatorname{Emb}(\Manifold, \R^{n+1})$.

To sum up and justify the use of the pre-shape regularized problem \cref{Eq_GeneralPreShapeProblemTangential} and its modified gradient system \cref{Eq_GradientSystemTangTang}, we provide existence of solutions and consistency of the modified gradients with the regularized problem.
\begin{theorem}[Shape Regularized Problems]\label{Thrm_ReguTang}
	Assume $\Manifold \subset \R^{n+1}$ to be an $n$-dimensional, orientable, path-connected and compact  $C^\infty$-manifold without boundary.
	Let shape optimization problem \cref{Eq_GeneralShapeProblem} be shape differentiable and have a minimizer $\Shape\in B_e^n$.
	For shape parameterization tracking, let us assume functions $g^\Manifold: \Manifold \rightarrow (0,\infty)$ and $f^\Manifold_\ShapeEmbedding: \ShapeEmbedding(\Manifold) \rightarrow (0,\infty)$ to be smooth, fulfill the normalization condition \cref{Assumption_fgNormalization}, and $f$ to have shape functionality \cref{Assumption_ShapeFunctionality}.
	
	Then there exists a $\ShapeEmbedding\in \ProjectionCanonical(\ShapeEmbedding) = \Shape \subset \operatorname{Emb}(\Manifold, \R^{n+1})$ minimizing the shape regularized problem \cref{Eq_GeneralPreShapeProblemTangential}.
	
	The modified pre-shape gradient system \cref{Eq_GradientSystemTangTang} is consistent with the full pre-shape gradient system \cref{Eq_GradientSystemTangFull} and the shape gradient system of the original problem \cref{Eq_GradientSystemOriginal}, in the sense that
	\begin{equation}\label{Eq_GradientSystemConsistency}
		\tilde{U} = 0 \iff U^{\TargetShp+\TargetPrShp^\tau} = 0 \text{ and } U^\TargetShp = 0.
	\end{equation}
	In particular, if $U^{\TargetShp+\TargetPrShp^\tau}=0$ is satisfied, the necessary first order conditions for \cref{Eq_GeneralPreShapeProblemTangential} and \cref{Eq_GeneralShapeProblem} are satisfied as well.
\end{theorem}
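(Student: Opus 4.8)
The plan is to treat the statement as two independent claims --- existence of a fibre-respecting minimizer and the three-way equivalence \cref{Eq_GradientSystemConsistency} --- and to reduce the second to a single tangential lemma. For existence I would first show that the inner minimization of the regularizer over each fibre of $\ProjectionCanonical$ vanishes identically. Writing $h:=g^\Manifold\circ\ShapeEmbedding^{-1}\cdot\det D^\tau\ShapeEmbedding^{-1}$, the function $h$ is, as $\ShapeEmbedding$ ranges over $\ProjectionCanonical^{-1}(\Shape)$, exactly the density with respect to $\diff s$ of the push-forward of $g^\Manifold\,\diff s$ under $\ShapeEmbedding$; by the normalization \cref{Assumption_fgNormalization} it therefore ranges over all smooth positive densities of total mass $\int_\Shape f^\Manifold_\ShapeEmbedding\,\diff s$. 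A prescribed-Jacobian (Dacorogna--Moser) argument, which is the content of \cite[Prop.~2]{luft2020pre}, produces a $\ShapeEmbedding$ with $h=f^\Manifold_\ShapeEmbedding$, i.e. $\TargetPrShp^\tau(\ShapeEmbedding)=0$, so $\min_{\ShapeEmbedding\in\ProjectionCanonical^{-1}(\Shape)}\TargetPrShp^\tau=0$ for every shape $\Shape$. Since $\TargetShp\circ\ProjectionCanonical$ depends only on the shape and $\TargetPrShp^\tau\ge 0$, a perfect-tracking lift $\ShapeEmbedding^\ast$ of the given shape minimizer $\Shape$ satisfies $(\TargetShp\circ\ProjectionCanonical)(\ShapeEmbedding^\ast)+\alpha^\tau\TargetPrShp^\tau(\ShapeEmbedding^\ast)=\TargetShp(\Shape)\le\TargetShp(\ProjectionCanonical(\ShapeEmbedding))\le(\TargetShp\circ\ProjectionCanonical)(\ShapeEmbedding)+\alpha^\tau\TargetPrShp^\tau(\ShapeEmbedding)$ for all $\ShapeEmbedding$; hence $\ShapeEmbedding^\ast$ minimizes \cref{Eq_GeneralPreShapeProblemTangential} and lies over $\Shape$.

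For consistency I would exploit that, $a(\cdot,\cdot)$ being symmetric positive definite, each of the three systems has vanishing solution if and only if its right-hand side vanishes as a functional on $H^1(\R^{n+1},\R^{n+1})$. I then split every right-hand side into a normal and a tangential part: by \cite[Thrm.~1]{luft2020pre} both $\ShpDeriv\TargetShp(\Shape)[V]$ and $\langle g_\ShapeEmbedding^\NormalSpaceShape,V\rangle$ in \cref{Eq_PreShapeDerTrackingNormal} see only $\langle V,\Normal\rangle$, whereas \cref{Eq_PreShapeDerTrackingTangential} shows $\langle g_\ShapeEmbedding^\TangentSpaceShape,V\rangle$ sees only $V-\langle V,\Normal\rangle\Normal$. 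As the normal and tangential traces of $V\in H^1(\R^{n+1},\R^{n+1})$ on $\Shape$ may be prescribed independently, the vanishing of any sum of these functionals decouples into one normal and one tangential equation.

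The key lemma --- and the step I expect to be the main obstacle --- is that the tangential equation $\langle g_\ShapeEmbedding^\TangentSpaceShape,V\rangle=0$ for all $V$ is equivalent to perfect tracking $\delta:=h-f^\Manifold_\ShapeEmbedding\equiv 0$. Integrating the $\operatorname{div}_\Shape$-term in \cref{Eq_PreShapeDerTrackingTangential} by parts on the closed manifold $\Shape$, the delicate point is that the $(h^2-f^2)$- and the $\delta\,\nabla_\Shape f$-contributions combine so that all cross terms cancel and the integrand reduces to a multiple of $\langle h\,\nabla_\Shape\delta,V\rangle$; the tangential equation thus reads $h\,\nabla_\Shape\delta=0$, and since $h>0$ this forces $\nabla_\Shape\delta=0$, i.e. $\delta$ constant. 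The normalization \cref{Assumption_fgNormalization} gives $\int_\Shape\delta\,\diff s=0$ and hence $\delta\equiv 0$; in turn both $(h^2-f^2)$ and $\delta$ vanish, so \cref{Eq_PreShapeDerTrackingNormal} yields $\langle g_\ShapeEmbedding^\NormalSpaceShape,V\rangle=0$ as well. Getting this cancellation exactly right, and confirming that the reparameterization variations exhaust precisely the mean-zero perturbations of $h$ so that no spurious solutions survive, is the genuinely technical part.

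Assembling the pieces yields \cref{Eq_GradientSystemConsistency}. By the decoupling, $\tilde U=0$ is equivalent to $\ShpDeriv\TargetShp(\Shape)\equiv 0$ together with $\langle g_\ShapeEmbedding^\TangentSpaceShape,\cdot\rangle\equiv 0$, i.e. to $\ShpDeriv\TargetShp(\Shape)\equiv 0$ and $\delta\equiv 0$; the first conjunct is exactly $U^\TargetShp=0$, and via the lemma $\delta\equiv 0$ also gives $\langle g_\ShapeEmbedding^\NormalSpaceShape,\cdot\rangle\equiv 0$, whence the right-hand side of \cref{Eq_GradientSystemTangFull} vanishes and $U^{\TargetShp+\TargetPrShp^\tau}=0$. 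Conversely, $U^{\TargetShp+\TargetPrShp^\tau}=0$ decouples into $\ShpDeriv\TargetShp+\langle g_\ShapeEmbedding^\NormalSpaceShape,\cdot\rangle\equiv 0$ and $\langle g_\ShapeEmbedding^\TangentSpaceShape,\cdot\rangle\equiv 0$, so with $U^\TargetShp=0$ (equivalently $\ShpDeriv\TargetShp(\Shape)\equiv 0$) one obtains $\langle g_\ShapeEmbedding^\NormalSpaceShape,\cdot\rangle\equiv 0$ and the right-hand side of \cref{Eq_GradientSystemTangTang} vanishes, i.e. $\tilde U=0$. The final clause is then immediate, since $U^{\TargetShp+\TargetPrShp^\tau}=0$ is precisely the vanishing of the full pre-shape gradient of \cref{Eq_GeneralPreShapeProblemTangential}, whose normal part $\ShpDeriv\TargetShp(\Shape)\equiv 0$ is the first-order optimality condition for \cref{Eq_GeneralShapeProblem}.
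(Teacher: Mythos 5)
Your proposal is correct, and its overall architecture matches the paper's: existence via a perfect-tracking lift of the optimal shape supplied by \cite[Prop.~2]{luft2020pre}, and consistency via decoupling the gradient systems into normal and tangential parts using the structure theorem \cite[Thrm.~1]{luft2020pre}. The genuine difference lies in how you handle the crux of the consistency argument. The paper simply cites the fiber-stationarity characterization \cite[Prop.~3]{luft2020pre}, which asserts that vanishing of $g_\ShapeEmbedding^\TangentSpaceShape$ already implies global minimality of $\TargetPrShp^\tau$ on the fiber and vanishing of $g_\ShapeEmbedding^\NormalSpaceShape$. You instead prove this implication from scratch, and the step you flag as the ``genuinely technical part'' does go through: writing $h:=g^\Manifold\circ\ShapeEmbedding^{-1}\cdot\det D^\tau\ShapeEmbedding^{-1}$ and $\delta:=h-f^\Manifold_\ShapeEmbedding$, integration by parts of the $\operatorname{div}_\Shape$-term on the closed hypersurface together with the pointwise identity $\tfrac12\nabla_\Shape(h^2-f^2)-\delta\,\nabla_\Shape f = h\,\nabla_\Shape\delta$ collapses \cref{Eq_PreShapeDerTrackingTangential} to $\int_{\ShapeEmbedding(\Manifold)} h\,(\nabla_\Shape\delta)^T V\,\diff s$, so positivity of $h$ and path-connectedness of $\Manifold$ force $\delta$ constant, and \cref{Assumption_fgNormalization} forces $\delta\equiv 0$, which kills both summands of \cref{Eq_PreShapeDerTrackingNormal}; the worry about ``exhausting mean-zero perturbations'' is not actually needed for this direction. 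Your route is thus more self-contained (it reproves the relevant special case of \cite[Prop.~3]{luft2020pre} rather than invoking it), at the cost of some surface calculus; the paper's route is shorter but leans on the companion paper. Your existence argument is also slightly more careful than the paper's in that it spells out, via the chain of inequalities, why a zero of $\TargetPrShp^\tau$ in the fiber over the optimal shape minimizes the \emph{sum} and not merely the regularizer.
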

\begin{proof}
	For the existence of solutions to the regularized problem \cref{Eq_GeneralPreShapeProblemTangential}, let us assume there exists a minimizer $\Shape\in B_e^n$ to the original problem \cref{Eq_GeneralShapeProblem}.
	By construction of the shape space $B_e^n$ by equivalence relation there exists a $\tilde{\ShapeEmbedding}\in \operatorname{Emb}(\Manifold, \R^{n+1})$, such that $\Shape = \ProjectionCanonical(\tilde{\ShapeEmbedding})$.
	So the set of pre-shapes $\ProjectionCanonical(\tilde{\ShapeEmbedding})$ acts as a set of candidates for solutions to \cref{Eq_GeneralPreShapeProblemTangential}.
	Since we required shape functionality of $f$ and normalization condition \cref{Assumption_fgNormalization}, the existence and characterization theorem for solutions to \cref{Eq_FinalGeneralParamTrackingProblem} (cf. \cite[Prop. 2]{luft2020pre}) gives existence of a global minimizer for $\TargetPrShp^\tau$ in every fiber of $\operatorname{Emb}(\Manifold, \R^{n+1})$.
	In particular, we can find such a $\ShapeEmbedding$ in $\ProjectionCanonical(\tilde{\ShapeEmbedding})$.
	From the last assertion of \cite[Prop. 2]{luft2020pre} we also get that $\TargetPrShp^\tau(\ShapeEmbedding) = 0$.
	However, since $\TargetPrShp^\tau \geq 0$ due to the quadratic nature of the parameterization tracking functional, we get that $\ShapeEmbedding$ is a solution to the regularized problem \cref{Eq_GeneralPreShapeProblemTangential}.
	
	Next, we prove the non-trivial direction '$\implies$' for  consistency of gradient systems in the sense of \cref{Eq_GradientSystemConsistency}.
	Let us assume we have a pre-shape gradient $\tilde{U}=0$ stemming from the modified system \cref{Eq_GradientSystemTangTang}.
	This immediately gives us 
	\begin{equation}
		\ShpDeriv\TargetShp(\Shape)[V]	
		+
		\alpha^\tau\cdot\langle g_\ShapeEmbedding^\TangentSpaceShape, V \rangle = 0 \quad \forall V\in H^1(\R^{n+1}, \R^{n+1}).
	\end{equation}
	However, due to the structure theorem for pre-shape derivatives \cite[Thrm. 1]{luft2020pre}, we know that the support of shape derivative $\ShpDeriv\TargetShp(\Shape)$ and pure pre-shape component $g_\ShapeEmbedding^\TangentSpaceShape$ are orthogonal.
	Hence we have 
	\begin{equation}\label{Temp1}
		\ShpDeriv\TargetShp(\Shape)[V] = 0 \text{ and } \alpha^\tau\cdot\langle g_\ShapeEmbedding^\TangentSpaceShape, V \rangle = 0 \quad \forall V\in H^1(\R^{n+1}, \R^{n+1}).
	\end{equation}
	This is the first order condition for \cref{Eq_GeneralShapeProblem}, also giving us $U^\TargetShp = 0$.
	
	It remains to show that the first order condition for the regularized problem \cref{Eq_GeneralPreShapeProblemTangential} is satisfied and the complete gradient $U^{\TargetShp+\TargetPrShp^\tau} = 0$.
	Essentially, this is a special case of result \cite[Prop. 3]{luft2020pre}, which characterizes global solutions of \cref{Eq_FinalGeneralParamTrackingProblem} by fiber stationarity.
	From \cref{Temp1} we see that $\ShapeEmbedding\in \operatorname{Emb}(\Manifold, \R^{n+1})$ is a fiber stationary point, since the full pre-shape derivative $\PrShpDeriv\TargetPrShp^\tau(\ShapeEmbedding)$ vanishes for all directions $V$ tangential to $\ShapeEmbedding(\Manifold)$.
	Hence \cite[Prop. 3]{luft2020pre} states that $\ShapeEmbedding$ is already a global minimizer of $\TargetPrShp^\tau$ and satisfies the corresponding necessary first order condition.
	Even more, \cite[Prop. 3]{luft2020pre} gives vanishing of  $g_\ShapeEmbedding^\NormalSpaceShape$.
	Therefore the right hand side of the full gradient system \cref{Eq_GradientSystemTangFull} is zero, giving us a vanishing full gradient $U^{\TargetShp+\TargetPrShp^\tau}$.
	
	Implication '$\Leftarrow$' can be seen by the fiber stationarity characterization as well.
	Since vanishing of the shape gradient $U^\TargetShp$ gives $\ShpDeriv\TargetShp(\Shape) = 0$, the full pre-shape derivative $\PrShpDeriv\TargetPrShp^\tau(\ShapeEmbedding)$ must be zero if the full pre-shape gradient $U^{\TargetShp^\tau} = 0$.
	Hence the fiber stationarity characterization \cite[Prop. 3]{luft2020pre} tells us that both $g_\ShapeEmbedding^\TangentSpaceShape$ and $g_\ShapeEmbedding^\NormalSpaceShape$ vanish for $\ShapeEmbedding$, which proves the other direction of \cref{Eq_GradientSystemConsistency}.	
\end{proof}

With \cref{Thrm_ReguTang} we can rest assured that optimization algorithms using the tangentially regularized gradient $U$ from  \cref{Eq_GradientSystemTangTang} leave stationary points of the original problem invariant.
Also, vanishing of the modified gradient $\tilde{U}$ indicates that we have a stationary shape $\Shape = \ProjectionCanonical(\ShapeEmbedding)$ with parameterization $\ShapeEmbedding$ having desired cell volume allocation $f_\ShapeEmbedding$.
Of course, this is only true up to discretization error.
We also see that the modified gradient system \cref{Eq_GradientSystemTangTang} captures the same information as the shape gradient system \cref{Eq_GradientSystemOriginal} and full pre-shape gradient system \cref{Eq_GradientSystemTangFull} combined.
This might seem counterintuitive at first glance, especially since necessary information is contained in one instead of two gradient systems.
However, by application of pre-shape calculus to derive the fiber stationarity characterization of solutions \cite[Prop. 3]{luft2020pre}, we recognize this circumstance as a consequence of the special structure of regularizing functional $\TargetPrShp^\tau$.
The fact that pre-shape spaces are fiber spaces with locally orthogonal tangential bundles for parameterizations and shapes (cf. \cite[Section 2]{luft2020pre}) is a fundamental prerequisite to this.
We will discuss numerical results comparing optimization using standard shape gradients \cref{Eq_GradientSystemOriginal} and gradients regularized by modified shape parameterization tracking \cref{Eq_GradientSystemTangTang} in \cref{Section_TheoryPreShapeRegularization}.

\begin{remark}[Applicability of Shape Regularization for General Shape Optimization Problems]\label{Remark_ValidityTangentialGenerality}
	It is important to notice, that we did not need a specific structure of the original shape optimization problem \cref{Eq_GeneralShapeProblem}.
	The only assumptions made are shape differentiability and existence of an optimal shape for $\TargetShp$.
	This means the tangential mesh regularization via $\TargetPrShp^\tau$ (cf. \cref{Eq_GeneralPreShapeProblemTangential}) is applicable for almost every meaningful shape optimization problem.
	In particular, PDE-constrained shape optimization problems can be regularized by this approach, which we will discuss in \cref{Section_ImplementationPreShapeRegu}.
	Also, the modified gradient system structure  \cref{Eq_GradientSystemTangTang} stays the same for different shape optimization targets $\TargetShp$. 
	It is solely the shape derivative $\ShpDeriv\TargetShp$ on the right hand side which changes depending on the shape problem target.
	Finally, existence of solutions and consistency of stationarity and first order conditions \cref{Thrm_ReguTang} is given 
\end{remark}

\begin{remark}[Equivalent Bi-Level Formulation of the Regularized Problem]\label{Remark_BiLevelTangential}
	It is also possible to formulate the shape mesh regularization as a nonlinear bilevel problem
	\begin{align}\label{Eq_BilevelTang}
	\begin{split}
		\underset{\ShapeEmbedding\in\operatorname{Emb}(\Manifold, \R^{n+1})}{\min}&\;\TargetPrShp^\tau(\ShapeEmbedding) \\
		\text{ s.t. } 		
		\ProjectionCanonical(\ShapeEmbedding) =&\; \underset{\Shape\in B_e^n}{\operatorname{arg} \min}\;\TargetShp(\Shape).
	\end{split}
	\end{align}
	The upper level pre-shape optimization problem depends only on the lower level shape optimization problem by the latter restricting the set of feasible points $\ShapeEmbedding$.
	So intuitively, solving \cref{Eq_BilevelTang} amounts to solving the lower level problem for a shape $\Shape$, and then to look for an optimal parameterization $\ShapeEmbedding\in \Shape \in B_e^n$ in the fiber corresponding to the optimal shape.
	If a solution to the lower level shape optimization problem exists, a solution $\ShapeEmbedding$ to the upper level problem exists as well (cf. \cite[Prop. 2]{luft2020pre}).
	The lower level problem enforces that the optimal shape $\Shape$ coincides with the fiber $\ProjectionCanonical(\ShapeEmbedding)$.
	
	The bilevel formulation motivates the modified gradient system \cref{Eq_GradientSystemTangTang} in a consistent manner. 
	For this, we can take the perspective of nonlinear bilevel programming as in \cite{savard1994steepest}.
	In finite dimensions, the authors of \cite{savard1994steepest} propose a way to calculate a descent direction by solving a bilevel optimization problem derived by the original problem. 
	We remind the reader that we formulated our systems as gradient systems and not descent directions, hence a change of sign compared to systems for descent directions in \cite{savard1994steepest} has to occur.
	Also notice that the additional constraint $\ProjectionCanonical(\ShapeEmbedding) = \Shape$ for the feasible set of solutions has to be added to formulate our bilevel problem in the exact style of \cite{savard1994steepest}.
	We can proceed with a symbolic calculation following \cite[Ch. 2]{savard1994steepest}, using relations \cite[Thrm. 1 (ii), (iii)]{luft2020pre} for pre-shape derivatives and the fact that $\TargetPrShp^\tau$ does not explicitly depend on $\Shape$ of the subproblem and $\Target$ not explicitly on $\ShapeEmbedding$ of the upper level problem.
	This yields a bilevel problem for the gradient $U$ to \cref{Eq_BilevelTang}
	\begin{align}\label{Eq_BilevelTangGradient}
	\begin{split}
		&\underset{ U \in H^1(\R^{n+1}, \R^{n+1}), \Vert U \Vert \leq 1}{\max} 
		\PrShpDeriv\TargetPrShp^\tau(\ShapeEmbedding)[U] \\
		\text{ s.t. }& U^\NormalSpaceShape = \underset{W \in H^1(\R^{n+1}, \R^{n+1}), \Vert W \Vert \leq 1}{\operatorname{ arg}\max} \ShpDeriv\TargetShp(\Shape)[W],
	\end{split}
	\end{align}
	where $U^\NormalSpaceShape$ is the component of $U$ normal to $\Shape$.
	In \cite[Ch. 3]{savard1994steepest} a descent method is employed for \cref{Eq_BilevelTangGradient} by alternating computation of $W_k$ and $U_k$.
	
	For our situation, a gradient $W_k$ for the lower level problem of \cref{Eq_BilevelTangGradient} corresponds to $U^\TargetShp$ solving the shape gradient system \cref{Eq_GradientSystemOriginal}.
	With this, the lower level constraint fixes the normal component of $U$ to be the shape derivative of the original problem \cref{Eq_GeneralShapeProblem}.
	By decomposing $U = U^\TangentSpaceShape + U^\NormalSpaceShape$ into tangential and normal directions, we see that the fixed normal component makes $\PrShpDeriv\TargetPrShp^\tau(\ShapeEmbedding)[U^\NormalSpaceShape] = 0$ a constant not relevant for the upper level problem.
	This lets us rewrite the system as
	\begin{align}\label{Eq_BilevelTangGradientFinal}
	\begin{split}
	\underset{ U^\TangentSpaceShape \in H^1(\R^{n+1}, \R^{n+1}), \Vert U^\TangentSpaceShape \Vert \leq 1}{\min}& 
	\langle g_\ShapeEmbedding^\TangentSpaceShape, U^\TangentSpaceShape \rangle \\
	\text{ s.t. } U^\NormalSpaceShape =\; U^\TargetShp& \\
	U =\; U^\TangentSpaceShape& + U^\NormalSpaceShape.
	\end{split}
	\end{align}
	We clearly see that the minimization of the tangential component $g^\TangentSpaceShape_\ShapeEmbedding$ in  \cref{Eq_BilevelTangGradientFinal} does not depend on the constraints given below.
	Hence it can be decoupled, so by considering $\alpha^\tau >0$ this leads to a gradient system
	\begin{equation}\label{Eq_GradientSystemTangModifiedSolo}
	a(U^\TangentSpaceShape, V) = 	
	\alpha^\tau\cdot\langle g_\ShapeEmbedding^\TangentSpaceShape, V \rangle \quad \forall V\in H^1(\R^{n+1}, \R^{n+1}).
	\end{equation}
	With the same orthogonality arguments made at \cref{Eq_GradientSystemOriginal} and \cref{Eq_GradientSystemParamTracking}, a separate computation of $U^\TargetShp$ and $U^\tau$ as in the general case in \cite{savard1994steepest} is not necessary in our case.
	The gradient $U = U^\TangentSpaceShape + U^\NormalSpaceShape$ for the bilevel problem \cref{Eq_BilevelTang} can be calculated using a single system, which coincides exactly with the modified gradient $\tilde{U}$ from system \cref{Eq_GradientSystemTangTang}.
	With \cref{Thrm_ReguTang}, this means using the modified pre-shape gradient $\tilde{U}$ as a descent direction in fact solves the bi-level problem \cref{Eq_BilevelTang}, the regularized problem \cref{Eq_GeneralPreShapeProblemTangential} and the original shape problem \cref{Eq_GeneralShapeProblem} at the same time.
\end{remark}

\subsection{Simultaneous Volume Mesh Quality and Shape Optimization}\label{SubSection_VolumeMeshQOpt}
	In this section we introduce the necessary machinery for a regularization strategy of volume meshes representing the hold-all domain $\HoldAll$.
	We will also incorporate our previous results to simultaneously optimize for shape mesh and volume mesh quality. 
	As before, we show a way to calculate modified pre-shape gradients without need for solving additional (non-)linear systems compared to standard shape gradient calculations.
	
	\subsubsection{Suitable Pre-Shape Spaces for Hold-All Mesh Quality Optimization}
	Using pre-shape techniques to increase volume mesh quality is a different task than regularizing the shape mesh as we have seen in \cref{SubSection_TangentialMeshQOpt}.
	Using a pre-shape space $\operatorname{Emb}(\Manifold, \R^{n+1})$ for this endeavor does not make sense, since this space contains all shapes in $\R^{n+1}$ combined with their parameterization.
	For $\operatorname{Emb}(\Manifold, \R^{n+1})$, parameterizations are described via the $n$-dimensional modeling or parameter manifold $\Manifold$.
	We now need a second, different parameter space modeling the hold-all domain.
	
	For this, let $\HoldAll\subset \R^{n+1}$ be a compact, orientable, path-connected, $n+1$-dimensional $C^\infty$-manifold.
	This hold-all domain will replace the unbounded hold-all domain $\R^{n+1}$ of our previous discussion.
	In most numerical cases, the hold-all domain has a non-empty boundary $\partial\HoldAll$.
	Hence we also permit non-empty $\partial\HoldAll$, but demand $C^\infty$-regularity of the boundary.
	We remind the reader that $\HoldAll$ is a compact manifold with boundary, so we have to distinguish from its interior, which we denote by $\operatorname{int}(\HoldAll)$.
	
	\begin{remark}[H\"older Regularity Setting]
		The setting of $C^\infty$-smoothness is taken because it is necessary to have a meaningful definition of shape space $B_e^n$.
		However, results concerning existence of optimal parameterizations $\ShapeEmbedding$ for \cref{Eq_GeneralPreShapeProblemTangential}, its pre-shape derivative formula \cref{Eq_PreShapeDerSplitting},  regularization strategies featuring the modified gradient system \cref{Eq_GradientSystemTangTang} and its consistency \cref{Thrm_ReguTang} all remain valid for the H\"older-regularity case $C^{k,\alpha}$ for $k\geq0$ and $1 > \alpha > 0$ (cf. \cite[Remark 3]{luft2020pre}).
		However, if $\HoldAll$ has $C^{k,\alpha}$-regularity and non-empty boundary, it is necessary to additionally assume $C^{k+3,\alpha}$-regularity for $\partial\HoldAll$.
		If this is given, all previous and following results remain valid.		
	\end{remark}
	A first choice for a pre-shape space to the hold-all domain would be $\operatorname{Emb}_{\partial\HoldAll}(\HoldAll, \HoldAll)$, which is the set of all $C^\infty$-embeddings leaving $\partial\HoldAll$ pointwise invariant, i.e.
	\begin{equation}\label{Eq_PreShapeSpaceNonTrivBoundary}
	\operatorname{Emb}_{\partial\HoldAll}(\HoldAll, \HoldAll):= \{\ShapeEmbedding\in \operatorname{Emb}(\HoldAll, \HoldAll): \ShapeEmbedding(p) = p \quad \forall p\in \partial\HoldAll \}.
	\end{equation}
	Leaving the outer boundary $\partial\HoldAll$ invariant can be beneficial from practical standpoint, since it might be part of an underlying problem formulation, such a boundary conditions of a PDE.
	
	The pre-shape space \cref{Eq_PreShapeSpaceNonTrivBoundary} in fact is the diffeomorphism group $\operatorname{Diff}_{\partial\HoldAll}(\HoldAll)$ leaving the boundary pointwise invariant.
	This is very important to notice, because it means  $\operatorname{Diff}_{\partial\HoldAll}(\HoldAll)$ is a pre-shape space consisting exactly of one fiber.
	The shape of hold-all domain $\HoldAll$ is fixed, and as a consequence its tangent bundle $T\operatorname{Diff}_{\partial\HoldAll}(\HoldAll)$ consists of vector fields with vanishing normal components on $\partial\HoldAll$ (cf. \cite[Thrm. 3.18]{smolentsev2007diffeomorphism}), i.e.
	\begin{equation}\label{Eq_TangentialSpaceVolDiffeo}
	T_\VolumeEmbedding\operatorname{Diff}_{\partial\HoldAll}(\HoldAll)
	\cong  
	C^\infty_{\partial\HoldAll}(\HoldAll,\R^{n+1}) := \{V\in C^\infty(\HoldAll,\R^{n+1}): \operatorname{Tr}_{\vert\partial\HoldAll}(V)=0 \} \quad \forall \VolumeEmbedding \in \operatorname{Diff}_{\partial\HoldAll}(\HoldAll).
	\end{equation}
	Here, $\operatorname{Tr}_{\vert\partial\HoldAll}(V)$ is the trace of $V$ on $\partial\HoldAll$.
	Even further, the structure theorem for pre-shape derivatives \cite[Thrm. 1]{luft2020pre} tells us that a pre-shape differentiable functional $\TargetPrShp^\HoldAll: \operatorname{Diff}_{\partial\HoldAll}(\HoldAll) \rightarrow \R$ always has trivial shape component $g^\NormalSpaceShape = 0$ of $\PrShpDeriv\TargetPrShp$.
	In particular, ordinary shape calculus is not applicable for functionals of this type.
	
	Our task to design mesh regularization strategies, which are numerically feasible and do not interfere with the original shape optimization problem.
	For a given shape $\ShapeEmbedding(\Manifold) \subset \HoldAll$, the latter is not guaranteed when a pre-shape space $\operatorname{Diff}_{\partial\HoldAll}$ is used to model parameterizations of the hold-all domain.
	A diffeomorphism $\VolumeEmbedding\in \operatorname{Diff}_{\partial\HoldAll}$ could change a given intermediate or optimal shape $\ShapeEmbedding(\Manifold)$, i.e. $\VolumeEmbedding\big(\ShapeEmbedding(\Manifold)\big) \neq \ShapeEmbedding(\Manifold)$ in general.
	Therefore we have to enforce invariance of shapes by further restricting the pre-shape space for $\HoldAll$.
	For this, we use the concept of diffeomorphism groups leaving submanifolds invariant (cf. \cite[Ch. 3.6]{smolentsev2007diffeomorphism}).
	With our previous notation \cref{Eq_PreShapeSpaceNonTrivBoundary}, the final pre-shape  space is $\operatorname{Diff}_{\partial\HoldAll\cup\ShapeEmbedding(\Manifold)}(\HoldAll)$, for a given fixed shape $\ShapeEmbedding(\Manifold)\subset\HoldAll$.
	This space is a subgroup of $\operatorname{Diff}_{\partial\HoldAll}(\HoldAll)$ and $\operatorname{Diff}(\HoldAll)$, with a tangential bundle consisting of vector fields vanishing both on $\partial\HoldAll$ and $\ShapeEmbedding(\Manifold)$ (cf. \cite[Thrm. 3.18]{smolentsev2007diffeomorphism}).
	
	\subsubsection{Volume Parameterization Tracking Problem with Invariant Shapes}
	Now we have a pre-shape space suitable to model reparameterizations of the hold-all domain $\HoldAll$, which can leave a given shape $\ShapeEmbedding(\Manifold)$ invariant.
	Next, we formulate an analogue of parameterization tracking problem \cref{Eq_GeneralPreShapeProblemTangential}, which is tracking for the volume parameterization of the hold-all domain.
	Let us fix a $\ShapeEmbedding \in \operatorname{Emb}(\Manifold, \HoldAll)$ and the corresponding shape $\ShapeEmbedding(\Manifold)\subset\HoldAll$.
	The volume parameterization tracking problem is then
	\begin{equation}\label{Eq_GeneralPreShapeProblemVolume}
		\underset{\VolumeEmbedding \in\operatorname{Diff}_{\partial\HoldAll\cup\ShapeEmbedding(\Manifold)}(\HoldAll)}{\min} \frac{1}{2}\int_{\HoldAll}
		\Big(
		g^\HoldAll \circ\VolumeEmbedding^{-1}(s) \cdot \det D \VolumeEmbedding^{-1}(s)
		-
		f^\HoldAll_{\ShapeEmbedding(\Manifold)}(s) 
		\Big)^2 \; \diff x =: \TargetPrShp^\HoldAll(\VolumeEmbedding).
	\end{equation}
	Notice the similarities and differences to shape parameterization tracking problem \cref{Eq_GeneralPreShapeProblemTangential}.
	Both pre-shape functionals track for a parameterization dictated by target $f$, and both feature a function $g$ describing the initial parameterization.
	The volume tracking functional $\TargetPrShp^\HoldAll$ differs from $\TargetPrShp^\tau$ by featuring a volume integral, instead of a surface one.
	Also, the covariant derivative of the Jacobian determinant in $\TargetPrShp^\HoldAll$ is just the regular Jacobian matrix of $\VolumeEmbedding^{-1}$.
	The two most important differences concern their sets of feasible solutions and targets $f$.
	The target $f^\HoldAll_{\ShapeEmbedding(\Manifold)}$ does not depend on the pre-shape $\VolumeEmbedding$ for the hold-all domain, but instead depends on the shape $\ShapeEmbedding(\Manifold)$ which is left to be invariant.
	Having $f^\HoldAll$ depend on the shape of $\HoldAll$ does not make sense, since $\operatorname{Diff}_{\partial\HoldAll\cup\ShapeEmbedding(\Manifold)}(\HoldAll)$ consists only of one fiber, i.e. the shape of $\HoldAll$ remains invariant.
	Hence there is a dependence of both the feasible set of pre-shapes $\operatorname{Diff}_{\partial\HoldAll\cup\ShapeEmbedding(\Manifold)}(\HoldAll)$ and the target $f^\HoldAll_{\ShapeEmbedding(\Manifold)}$ on the shape $\ShapeEmbedding(\Manifold)$, because we desired $\ShapeEmbedding(\Manifold)$ to stay unaltered.
	For this reason, the volume parameterization tracking problem \cref{Eq_GeneralPreShapeProblemVolume} differs from \cref{Eq_GeneralPreShapeProblemTangential} to such an extent, that \cite[Prop. 2]{luft2020pre} does not cover existence of solutions $\VolumeEmbedding\in\operatorname{Diff}_{\partial\HoldAll\cup\ShapeEmbedding(\Manifold)}(\HoldAll)$ for \cref{Eq_GeneralPreShapeProblemVolume}.
	This makes it necessary to formulate a result guaranteeing existence of solutions for \cref{Eq_GeneralPreShapeProblemVolume} under appropriate conditions.
	\begin{theorem}[Existence of Solutions for the Volume Parameterization Tracking Problem]\label{Thrm_ExistenceVolParamTracking}
	Assume $\HoldAll\subset \R^{n+1}$ to a compact, orientable, path-connected, $n+1$-dimensional $C^\infty$-manifold, possibly with boundary.
	Let $\Manifold$ be an $n$-dimensional, orientable,  path-connected and compact $C^\infty$-submanifold of $\HoldAll$ without boundary.
	Fix a $\ShapeEmbedding\in\operatorname{Emb}(\Manifold, \HoldAll)$ generating a shape $\ShapeEmbedding(\Manifold) \subset \HoldAll$.
	Denote by $\HoldAll_\ShapeEmbedding^{\text{in}}$ and $\HoldAll_\ShapeEmbedding^{\text{out}}$ the disjoint inner and outer part of $\HoldAll$ partitioned by $\ShapeEmbedding(\Manifold)$.

	Let $g^\HoldAll: \HoldAll \rightarrow (0,\infty)$ by a $C^\infty$-function and $f^\HoldAll_{\ShapeEmbedding(\Manifold)}: \HoldAll \rightarrow (0,\infty)$ be $C^\infty$-regular on $\HoldAll\setminus\ShapeEmbedding(\Manifold)$. 
	Further assume the normalization conditions 
	\begin{equation}\label{Assumption_TheoremVolfNormed}
	\int_{\HoldAll_\ShapeEmbedding^{\text{in}}} f_{\ShapeEmbedding(\Manifold)}^\HoldAll(x) \; \diff x 
	=
	\int_{\HoldAll_\ShapeEmbedding^{\text{in}}} g^\HoldAll\; \diff x 
	\quad\text{ and }\quad \int_{\HoldAll_\ShapeEmbedding^{\text{out}}} f_{\ShapeEmbedding(\Manifold)}^\HoldAll(x) \; \diff x 
	=
	\int_{\HoldAll_\ShapeEmbedding^{\text{out}}} g^\HoldAll\; \diff x.
	\end{equation}
	
	Then there exists a global $C^\infty$-solution to problem  \cref{Eq_GeneralPreShapeProblemVolume}, i.e. a diffeomorphism $\tilde{\VolumeEmbedding}\in \operatorname{Diff}(\HoldAll)$ satisfying
	\begin{equation}\label{Eq_EquiCharSolVol}
	(g^\HoldAll\circ\tilde{\VolumeEmbedding}^{-1})\cdot\det D \tilde{\VolumeEmbedding}^{-1} =  f_{\ShapeEmbedding(\Manifold)}^\HoldAll \; \text{ on } \HoldAll\setminus\ShapeEmbedding(\Manifold) \quad \text{ and } \quad
	\tilde{\VolumeEmbedding}(p) = p \quad \forall p\in \partial\HoldAll\cup\ShapeEmbedding(\Manifold).
	\end{equation}
	\end{theorem}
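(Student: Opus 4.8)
The equation in \cref{Eq_EquiCharSolVol} is a prescribed-Jacobian (pushforward of volume) problem, so the plan is to solve it by Moser's method. Writing $\diff x$ for the Euclidean volume form on $\HoldAll$ induced by the fixed orientation, the first condition in \cref{Eq_EquiCharSolVol} says precisely that $\tilde{\VolumeEmbedding}$ transports the density $g^\HoldAll$ onto $f^\HoldAll_{\ShapeEmbedding(\Manifold)}$, i.e. $\tilde{\VolumeEmbedding}_*(g^\HoldAll\,\diff x) = f^\HoldAll_{\ShapeEmbedding(\Manifold)}\,\diff x$ on $\HoldAll\setminus\ShapeEmbedding(\Manifold)$. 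Since I will construct $\tilde{\VolumeEmbedding}$ as the identity on $\ShapeEmbedding(\Manifold)$ and connected to the identity, it will preserve the two components $\HoldAll_\ShapeEmbedding^{\text{in}}$, $\HoldAll_\ShapeEmbedding^{\text{out}}$; conversely, it suffices to construct the diffeomorphism separately on each of the two compact $C^\infty$-manifolds-with-boundary $\overline{\HoldAll_\ShapeEmbedding^{\text{in}}}$ (with boundary $\ShapeEmbedding(\Manifold)$) and $\overline{\HoldAll_\ShapeEmbedding^{\text{out}}}$ (with disjoint boundary components $\partial\HoldAll$ and $\ShapeEmbedding(\Manifold)$), in each case demanding the identity on the whole boundary, and then to glue.

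On each piece I would run Moser's trick. Interpolate $\omega_t := \big((1-t)g^\HoldAll + t f^\HoldAll_{\ShapeEmbedding(\Manifold)}\big)\,\diff x$, which is a positive volume form for every $t\in[0,1]$ because $g^\HoldAll, f^\HoldAll_{\ShapeEmbedding(\Manifold)}>0$. I seek a flow $\psi_t$ with $\psi_t^*\omega_t = \omega_0$ and $\psi_0 = \operatorname{id}$; differentiating in $t$ and invoking Cartan's formula $\mathcal{L}_{X_t}\omega_t = \diff(\iota_{X_t}\omega_t)$ reduces this to solving $\diff(\iota_{X_t}\omega_t) = (g^\HoldAll - f^\HoldAll_{\ShapeEmbedding(\Manifold)})\,\diff x$ for the generating field $X_t$. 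Equivalently, setting $Y_t := \rho_t X_t$ with $\rho_t = (1-t)g^\HoldAll + tf^\HoldAll_{\ShapeEmbedding(\Manifold)}$, this is the divergence equation $\operatorname{div} Y_t = g^\HoldAll - f^\HoldAll_{\ShapeEmbedding(\Manifold)}$ with $Y_t = 0$ (equivalently $X_t=0$, since $\rho_t>0$) on the boundary of the piece. The decisive point is that the divergence theorem forces the compatibility condition $\int (g^\HoldAll - f^\HoldAll_{\ShapeEmbedding(\Manifold)}) = 0$ \emph{on each piece}, and these are exactly the two separate normalization conditions \cref{Assumption_TheoremVolfNormed}. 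Under them the Dacorogna--Moser theorem supplies a $C^\infty$ solution $Y_t$, hence $X_t$, vanishing on the boundary; integrating the non-autonomous field from $0$ to $1$ yields a $C^\infty$-diffeomorphism of the piece equal to the identity on its boundary and satisfying the Jacobian equation on the interior.

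Gluing the inner and outer maps then produces $\tilde{\VolumeEmbedding}$ with $\tilde{\VolumeEmbedding}=\operatorname{id}$ on $\partial\HoldAll\cup\ShapeEmbedding(\Manifold)$, which is the asserted \cref{Eq_EquiCharSolVol}. That $\tilde{\VolumeEmbedding}$ is then a global minimizer of \cref{Eq_GeneralPreShapeProblemVolume} follows immediately, since $\TargetPrShp^\HoldAll\geq 0$ by its quadratic form and its integrand vanishes off the null set $\ShapeEmbedding(\Manifold)$, giving $\TargetPrShp^\HoldAll(\tilde{\VolumeEmbedding}) = 0$.

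The main obstacle I expect is the regularity at the interface. Because $f^\HoldAll_{\ShapeEmbedding(\Manifold)}$ is only assumed $C^\infty$ on $\HoldAll\setminus\ShapeEmbedding(\Manifold)$ and may jump across $\ShapeEmbedding(\Manifold)$, the two one-sided maps are built independently and cannot in general be expected to match to infinite order across the interface; the careful part is to argue that the glued map is a genuine diffeomorphism of $\HoldAll$ fixing $\ShapeEmbedding(\Manifold)$, smooth up to the boundary on each closed piece with agreeing tangential jets (both sides restrict to the identity there), rather than globally $C^\infty$ across $\ShapeEmbedding(\Manifold)$ unless additional matching of $f^\HoldAll_{\ShapeEmbedding(\Manifold)}$ is imposed. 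The second technical point is to guarantee that the Dacorogna--Moser solution of the divergence equation vanishes on the \emph{entire} boundary of each piece, with the stated $C^\infty$-regularity up to the (corner-free, since $\partial\HoldAll$ and $\ShapeEmbedding(\Manifold)$ are disjoint) boundary; this is exactly where the two separate mass balances, rather than a single global one, are indispensable.
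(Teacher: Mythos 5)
Your proposal is correct and follows essentially the same route as the paper: both split $\HoldAll$ along $\ShapeEmbedding(\Manifold)$ into the inner and outer pieces supplied by Jordan--Brouwer, solve the prescribed-Jacobian problem on each piece via the (Dacorogna--)Moser machinery using exactly the two separate mass balances \cref{Assumption_TheoremVolfNormed}, and glue the two boundary-fixing diffeomorphisms. The only differences are that the paper delegates the per-piece step to \cite[Prop. 2]{luft2020pre} rather than unpacking the Moser flow explicitly, and your caveat that the glued map is in general only piecewise smooth across $\ShapeEmbedding(\Manifold)$ is a legitimate point that the paper's own proof glosses over when it asserts global $C^\infty$-regularity.
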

	\begin{proof}
		Let the assumptions on $\HoldAll\subset\R^{n+1}$ and $\Manifold\subset\HoldAll$ be true.
		We fix a $\ShapeEmbedding\in\operatorname{Emb}(\Manifold, \HoldAll)$, and see that $\ShapeEmbedding(\Manifold)\subset\HoldAll$ is an $n$-dimensional, orientable, path-connected and compact $C^\infty$-submanifold of $\HoldAll$ as well.
		With $\ShapeEmbedding(\Manifold)$ being a connected and compact hypersurface of $\HoldAll$, the celebrated Jordan-Brouwer separation theorem (cf. \cite[p. 89]{guillemin2010differential}) guarantees existence of open and disjoint inner $\HoldAll_\ShapeEmbedding^{\text{in}}$ and outer $\HoldAll_\ShapeEmbedding^{\text{out}}$ of $\HoldAll$ separated by $\ShapeEmbedding(\Manifold)$.
		Next, let $g^\HoldAll$ and $f_{\ShapeEmbedding(\Manifold)}^\HoldAll$ be as described, in particular satisfying normalization conditions \cref{Assumption_TheoremVolfNormed}.
		With existence of a separated inner and outer, we can decouple the volume tracking problem \cref{Eq_GeneralPreShapeProblemVolume} into two independent subproblems
		\begin{equation}\label{Eq_VolTrackingSubProblems}
			\underset{\VolumeEmbedding_{\text{in}} \in\operatorname{Diff}_{\partial\HoldAll^{\text{in}}}(\overline{\HoldAll^{\text{in}}})}{\min} \TargetPrShp^{\HoldAll^{\text{in}}}(\VolumeEmbedding_{\text{in}})
			 \quad \text{ and } \quad 
 			\underset{\VolumeEmbedding_{\text{out}} \in\operatorname{Diff}_{\partial\HoldAll^{\text{out}}}(\overline{\HoldAll^{\text{out}}})}{\min} \TargetPrShp^{\HoldAll^{\text{out}}}(\VolumeEmbedding_{\text{out}}).
		\end{equation}
		Both problems do not feature invariant submanifolds in the interior anymore, since $\partial\HoldAll^{\text{in}}_\ShapeEmbedding = \ShapeEmbedding(\Manifold)$ and $\partial\HoldAll^{\text{out}}_\ShapeEmbedding = \ShapeEmbedding(\Manifold)\cup\partial\HoldAll$.
		Thus interior and exterior are both $C^\infty$-manifolds with $C^\infty$-boundaries.
		With this, and two required normalization conditions \cref{Assumption_TheoremVolfNormed}, we are in position to apply existence theorem \cite[Prop. 2]{luft2020pre} for both independent subproblems \cref{Eq_VolTrackingSubProblems}.
		This gives us two $C^\infty$-diffeomorphism $\tilde{\VolumeEmbedding}_{\text{in}}\in\operatorname{Diff}_{\partial\HoldAll^{\text{in}}_\ShapeEmbedding}(\overline{\HoldAll^{\text{in}}_\ShapeEmbedding})$ and $\tilde{\VolumeEmbedding}_{\text{out}}\in\operatorname{Diff}_{\partial\HoldAll^{\text{out}}_\ShapeEmbedding}(\overline{\HoldAll^{\text{out}}_\ShapeEmbedding})$ globally solving \cref{Eq_VolTrackingSubProblems}, which in particular satisfy
		\begin{equation}\label{Temp2}
		(g^\HoldAll\circ\tilde{\VolumeEmbedding}_{\text{in}}^{-1})\cdot\det D \tilde{\VolumeEmbedding}_{\text{in}}^{-1} =  f_{\ShapeEmbedding(\Manifold)}^\HoldAll \; \text{on}\;  \HoldAll^{\text{in}}_\ShapeEmbedding 
		\quad \text{and} \quad (g^\HoldAll\circ\tilde{\VolumeEmbedding}_{\text{out}}^{-1})\cdot\det D \tilde{\VolumeEmbedding}_{\text{out}}^{-1} =  f_{\ShapeEmbedding(\Manifold)}^\HoldAll \; \text{on}\;  \HoldAll^{\text{out}}_\ShapeEmbedding.
		\end{equation}
		We define a global solution candidate for \cref{Eq_GeneralPreShapeProblemVolume} by setting $\tilde{\VolumeEmbedding} := \tilde{\VolumeEmbedding}_{\text{in}} + \tilde{\VolumeEmbedding}_{\text{out}}$.
		It is clear that $\tilde{\VolumeEmbedding}$ is a bijection.
		Also, $\tilde{\VolumeEmbedding}$ is the identity on $\partial\HoldAll\cup\ShapeEmbedding(\Manifold)$, which is the second property of \cref{Eq_EquiCharSolVol}. 
		We know that $\tilde{\VolumeEmbedding}_{\text{in}}$ is $C^\infty$ on $\overline{\HoldAll^{\text{in}}_\ShapeEmbedding}$ and $\tilde{\VolumeEmbedding}_{\text{out}}$ is $C^\infty$ on $\overline{\HoldAll^{\text{out}}_\ShapeEmbedding}$.
		With this, and $\tilde{\VolumeEmbedding}_{\text{in}} = \tilde{\VolumeEmbedding}_{\text{out}} = 0$ on $\ShapeEmbedding(\Manifold)$, we get that $\tilde{\VolumeEmbedding}$ has $C^\infty$ on the entire hold-all domain $\HoldAll$.
		In combination, this means $\tilde{\VolumeEmbedding}\in\operatorname{Diff}_{\partial\HoldAll\cup\ShapeEmbedding(\Manifold)}(\HoldAll)$.
		Using \cref{Temp2}, we also get the first assertion of \cref{Eq_EquiCharSolVol}.
		Finally, we can use \cref{Eq_EquiCharSolVol} to see that $\TargetPrShp^\HoldAll(\tilde{\VolumeEmbedding}) = 0$, since $\ShapeEmbedding(\Manifold)$ is a set of measure zero.
		Due to quadratic nature of \cref{Eq_GeneralPreShapeProblemVolume} we have $\TargetPrShp^\HoldAll \geq 0$, which tells us that $\tilde{\VolumeEmbedding}$ is a global solution to the volume parameterization tracking problem.
		Since we did not use any special property of $\ShapeEmbedding$, the argumentation holds for all $\ShapeEmbedding\in\operatorname{Emb}(\Manifold,\HoldAll)$ and concludes the proof.
	\end{proof}
	
	\begin{remark}[Necessity of two Normalization Conditions for $g^\HoldAll$ and $f_{\ShapeEmbedding(\Manifold)}^\HoldAll$]
	For guaranteed existence of optimal solutions $\VolumeEmbedding\in\operatorname{Diff}_{\partial\HoldAll\cup\Shape}(\HoldAll)$ to volume parameterization tracking \cref{Eq_GeneralPreShapeProblemVolume}, it is necessary to assume to separate normalization conditions \cref{Assumption_TheoremVolfNormed}.
	An invariant shape $\ShapeEmbedding(\Manifold)\subset\HoldAll$ acts as a boundary, partitioning the hold-all domain into inner and outer.
	As we require solutions $\VolumeEmbedding$ to leave $\ShapeEmbedding(\Manifold)$ pointwise invariant, the diffeomorphism $\VolumeEmbedding$ is not allowed to transport volume from outside to inside and vice versa.
	Hence, in general, a single normalization condition on the entire hold-all domain $\HoldAll$ of type \cref{Assumption_fgNormalization} is not sufficient for existence of solutions.
	A direct application of Dacorogna and Moser's theorem \cite{dacorogna1990partial} to \cref{Eq_GeneralPreShapeProblemVolume} yields a $\VolumeEmbedding\in \operatorname{Diff}_{\partial\HoldAll}(\HoldAll)$ possibly transporting volume across $\ShapeEmbedding(\Manifold)$, which we have to restrict if $\ShapeEmbedding(\Manifold)$ is left to be invariant.
	As the total inner and outer volume is changing with varying  $\ShapeEmbedding(\Manifold)$, we have to require normalization condition \cref{Assumption_TheoremVolfNormed} for each $\ShapeEmbedding(\Manifold)$ separately.
	For this reason our target $f^\HoldAll_{\ShapeEmbedding(\Manifold)}$ has to depend on $\ShapeEmbedding(\Manifold)$, even though the shape of $\HoldAll$ stays the same.
	This has crucial implications on the design of targets $f^\HoldAll$ in numerical applications, which we will discuss in \cref{SubSubSection_NumericsGandFTargetConstruction}.
	\end{remark}
	\begin{remark}[Generality of Invariant Shapes]
		In existence result \cref{Thrm_ExistenceVolParamTracking}, we have required the invariant shape to be of form $\ShapeEmbedding(\Manifold)$ for some $\ShapeEmbedding\in\operatorname{Emb}(\Manifold, \HoldAll)$.
		This is solely due to the context of optimization problem \cref{Eq_GeneralShapeProblem} using shape spaces $B_e^n$.
		It is absolutely possible to use any compact and connected hypersurface $\Shape\subset\HoldAll$ as an invariant shape for \cref{Eq_GeneralPreShapeProblemVolume}. 
		Existence of  $\VolumeEmbedding\in\operatorname{Diff}_{\partial\HoldAll\cup\Shape}(\HoldAll)$ globally solving the volume parameterization tracking problem is still guaranteed for general $\Shape$.
	\end{remark}
	As we want to regularize gradient descent methods for shape optimization, we need to explicitly specify the pre-shape derivative to the volume parameterization tracking problem \cref{Eq_GeneralPreShapeProblemVolume}.
	However, it is also of theoretical interest, since it serves as an example of a problem with a pre-shape derivative having vanishing shape component.
	This means that neither the formulation of \cref{Eq_GeneralPreShapeProblemVolume} in the context of classical shape optimization, nor the derivation of a derivative using classical shape calculus are possible. 
	Since the form of $\TargetPrShp^\HoldAll$ is similar to $\TargetPrShp^\tau$, we can mimic the steps from \cite[Thrm. 2]{luft2020pre}, which we do not restate for brevity.
	Then, for given $\VolumeEmbedding\in\operatorname{Diff}_{\partial\HoldAll\cup\ShapeEmbedding(\Manifold)}(\HoldAll)$, the pre-shape derivative of $\Target^\HoldAll$ in decomposed form (cf. \cite[Thrm. 1]{luft2020pre}) is given by
	\begin{equation}\label{Eq_PreShapeDerSplittingVol}
	\PrShpDeriv\TargetPrShp^\HoldAll(\VolumeEmbedding)[V] = \langle g_\VolumeEmbedding^\NormalSpaceShape, V \rangle + \langle g_\VolumeEmbedding^\TangentSpaceShape, V \rangle \quad \forall V\in C_{\partial\Manifold\cup\ShapeEmbedding(\Manifold)}^\infty(\HoldAll, \R^{n+1}),
	\end{equation}
	with normal/ shape component
	\begin{align}\label{Eq_PreShapeDerTrackingNormalVol}
	\begin{split}
	\langle g_\VolumeEmbedding^\NormalSpaceShape, V \rangle 
	= 0
	\end{split}
	\end{align}
	and tangential/ pre-shape component
	\begin{align}\label{Eq_PreShapeDerTrackingTangentialVol}
	\begin{split}
	\langle g_\VolumeEmbedding^\TangentSpaceShape, V \rangle 
	= 
	&-\int_{\HoldAll} \frac{1}{2}\cdot\Big(\big(g^\HoldAll\circ\VolumeEmbedding^{-1}\cdot \det D\VolumeEmbedding^{-1}\big)^2 - {f^\HoldAll_{\ShapeEmbedding(\Manifold)}}^2\Big)
	\cdot\operatorname{div}(V) 
	\\
	&\qquad\qquad\quad+
	\Big(g^\HoldAll\circ\VolumeEmbedding^{-1}\cdot \det D \VolumeEmbedding^{-1} - f^\HoldAll_{\ShapeEmbedding(\Manifold)}\Big)
	\cdot
	\PrShpDeriv_m\big(f^\HoldAll_{\ShapeEmbedding(\Manifold)}\big)[V] \;\diff x.
	\end{split}
	\end{align}
	Here, $n$ is the outer unit normal vector field of the invariant submanifold $\ShapeEmbedding(\Manifold)$.
	As previously mentioned, we signify a duality pairing by $\langle \cdot, \cdot \rangle$.
	It is also important to see that the restriction of $\operatorname{Diff}(\HoldAll)$ to $\operatorname{Diff}_{\partial\Manifold\cup\ShapeEmbedding(\Manifold)}(\HoldAll)$ does not influence the form of pre-shape derivative $\PrShpDeriv\TargetPrShp^\HoldAll$.
	Instead, it influences the space of applicable directions $V$ by restricting $C^\infty(\HoldAll, \R^{n+1})$ to $C^\infty_{\partial\Manifold\cup\ShapeEmbedding(\Manifold)}(\HoldAll, \R^{n+1})$.
	This stems from the relationship of tangential bundles of diffeomorphism groups with invariant submanifolds \cref{Eq_TangentialSpaceVolDiffeo}.
	There is also another subtle difference of $\PrShpDeriv\TargetPrShp^\HoldAll$ and $\PrShpDeriv\TargetPrShp^\tau$.
	Namely, the featured pre-shape material derivative $\PrShpDeriv_m\big(f^\HoldAll_{\ShapeEmbedding(\Manifold)}\big)$ depends on the invariant shape $\ShapeEmbedding(\Manifold)\subset\HoldAll$ instead of the volume pre-shape $\VolumeEmbedding$.
		
	It is straight forward to formulate a pre-shape gradient system for $\PrShpDeriv\TargetPrShp^\HoldAll$ in the style of \cref{SubSection_TangentialMeshQOpt} using Sobolev functions.
	For a symmetric, positive-definite bilinear form $a(.,.)$ it takes the form
	\begin{equation}\label{Eq_GradientSystemVolSolo}
		a(U^{\TargetPrShp^\HoldAll}, V) = 	
		\alpha^\HoldAll\cdot\PrShpDeriv\TargetPrShp^\HoldAll(\VolumeEmbedding)[V] 
		\quad \forall V\in H^1_{\partial\HoldAll\cup\ShapeEmbedding(\Manifold)}(\HoldAll, \R^{n+1}).
	\end{equation}
	Notice that the space of test functions enforces vanishing  $U^{\TargetPrShp^\HoldAll}$ on $\ShapeEmbedding(\Manifold)$.
	With a pre-shape gradient $U^{\TargetPrShp^\HoldAll}$, it is possible to optimize for the volume parameterization tracking problem \cref{Eq_GeneralPreShapeProblemVolume} without altering the shape of $\ShapeEmbedding(\Manifold)$.
	
	\subsubsection{Regularization of Shape Optimization Problems by Volume Parameterization Tracking}\label{SubSubSection_VolReguConsistency}
	At this point, we have provided a suitable space $\operatorname{Diff}_{\partial\HoldAll\cup\ShapeEmbedding(\Manifold)}(\HoldAll)$ for pre-shapes representing the parameterization of hold-all domains $\HoldAll$, which leave a given shape $\ShapeEmbedding(\Manifold)\subset\HoldAll$ invariant.
	Also, we are able to guarantee existence for global minimizers to the volume version of parameterization tracking problem \cref{Eq_GeneralPreShapeProblemVolume} for all shapes $\ShapeEmbedding(\Manifold)$.
	In this subsection we are going to incorporate the volume mesh quality regularization simultaneously with shape optimization and shape mesh quality regularization.
	
	To formulate a regularized version of the original shape optimization problem \cref{Eq_GeneralShapeProblem}, we have to keep the different types of pre-shapes involved in mind.
	These pre-shapes $\ShapeEmbedding\in\operatorname{Emb}(\Manifold, \HoldAll)$ and $\VolumeEmbedding\in \operatorname{Diff}_{\partial\HoldAll\cup\ShapeEmbedding(\Manifold)}(\HoldAll)$ correspond to completely different shapes $\ShapeEmbedding(\Manifold)$ and $\HoldAll$.
	This is also illustrated by looking at the pre-shapes as maps
	\begin{equation}
		\ShapeEmbedding: \Manifold \rightarrow \HoldAll \quad \text{and} \quad \VolumeEmbedding: \HoldAll \rightarrow \HoldAll.
	\end{equation}
	For this reason, we cannot simply proceed by adding $\TargetPrShp^\HoldAll$ in style of a regularizer to increase volume mesh quality.
	From the viewpoint of problem formulation, this signifies a main difference in application of shape mesh quality regularization via $\TargetPrShp^\tau$ and volume mesh quality regularization $\TargetPrShp^\HoldAll$.
	To avoid this issue, we formulate the volume and shape mesh regularized shape optimization problem using a bi-level approach.
	We have already seen in \cref{Remark_BiLevelTangential}, that simultaneous shape parameterization tracking and shape optimization can be put into the bi-level framework.
	In fact, this was seen to be equivalent to the added regularizer approach \cref{Eq_GeneralPreShapeProblemTangential} with regard to gradient systems.
	
	Let us consider weights $\alpha^\HoldAll, \alpha^\tau >0$. Then we formulate the simultaneous volume and shape mesh regularization of shape optimization problem \cref{Eq_GeneralShapeProblem} as
	\begin{align}\label{Eq_GeneralPreShapeProblemVolumeTang}
	\begin{split}
		\underset{\VolumeEmbedding\in \operatorname{Diff}_{\partial\HoldAll\cup\ShapeEmbedding(\Manifold)}(\HoldAll)}{\min}&\;\alpha^\HoldAll\cdot\TargetPrShp^\HoldAll(\VolumeEmbedding) \\
		\text{ s.t. } 		
		\ShapeEmbedding =&\; \underset{\ShapeEmbedding\in \operatorname{Emb}(\Manifold, \HoldAll)}{\operatorname{arg} \min}\;\big(\TargetShp\circ \ProjectionCanonical\big)(\ShapeEmbedding) + \alpha^\tau\cdot\TargetPrShp^\tau(\ShapeEmbedding).
	\end{split}
	\end{align}
	Of course, the bi-level problem \cref{Eq_GeneralPreShapeProblemVolumeTang} can be formulated for $\alpha^\HoldAll = 1$ without loss of generality.
	To stay coherent with \cref{SubSection_TangentialMeshQOpt} regarding pre-shape gradient systems, which feature weighted force terms, we prefer to formulate \cref{Eq_GeneralPreShapeProblemVolumeTang} with a factor $\alpha^\HoldAll>0$.
	Notice, that the regularization of shape optimization problem \cref{Eq_GeneralShapeProblem} needs to use its pre-shape extension \cref{Eq_GeneralShapeProblemPrShpExt}.
	This is necessary in order to rigorously apply the pre-shape regularization strategies.
	In contrast to \cref{Eq_GeneralPreShapeProblemTangential} and \cref{Eq_BilevelTang}, the simultaneous volume and shape mesh quality regularized problem \cref{Eq_GeneralPreShapeProblemVolumeTang} is not minimizing for one pre-shape, but for two different pre-shapes $\ShapeEmbedding\in\operatorname{Emb}(\Manifold, \HoldAll)$ and $\VolumeEmbedding\in \operatorname{Diff}_{\partial\HoldAll\cup\ShapeEmbedding(\Manifold)}(\HoldAll)$.
	The lower level problem solves for a pre-shape corresponding to the actual parameterized shape solving the shape mesh regularized optimization problem \cref{Eq_GeneralPreShapeProblemTangential}.
	On the other hand, the upper level problem looks for a pre-shape corresponding to the parameterization of the hold-all domain $\HoldAll$ with specified volume mesh quality.
	The set of feasible solutions $\operatorname{Diff}_{\partial\HoldAll\cup\ShapeEmbedding(\Manifold)}(\HoldAll)$ to the upper level problem depends on the lower level problem, because the optimal shape to the latter is demanded to stay invariant.
	\begin{remark}[Volume Mesh Quality Regularization]\label{Remark_VolReguNoTangProblem}
		It is of course possible to regularize a shape optimization problem \cref{Eq_GeneralShapeProblem} for volume mesh quality only, neglecting shape mesh parameterization tracking.
		In this scenario, the regularized problem takes the bi-level formulation
		\begin{align}\label{Eq_GeneralPreShapeProblemVolumeNoTang}
		\begin{split}
			\underset{\VolumeEmbedding\in \operatorname{Diff}_{\partial\HoldAll\cup\Shape}(\HoldAll)}{\min}&\;\alpha^\HoldAll\cdot\TargetPrShp^\HoldAll(\VolumeEmbedding) \\
			\text{ s.t. } 		
			\Shape =&\; \underset{\Shape\in B_e^n}{\operatorname{arg} \min}\;\TargetShp(\Shape).
		\end{split}
		\end{align}
		Here, it is not necessary to use the pre-shape expansion \cref{Eq_GeneralShapeProblemPrShpExt} of the original shape optimization problem.
	\end{remark}
	In the remainder of this section we propose a regularized gradient system for simultaneous volume- and shape mesh quality optimization during shape optimization, and prove a corresponding existence and consistency result for the fully regularized problem.
	As done in \cref{SubSection_TangentialMeshQOpt}, we change the space of directions from $C^\infty$ to $H^1$, as it is more suitable for numerical application.
	We remind the reader that the pre-shape derivative $\PrShpDeriv\TargetPrShp^\HoldAll(\VolumeEmbedding)[V]$ is defined only for directions $V\in H^1_{\partial\Manifold\cup\ShapeEmbedding(\Manifold)}(\HoldAll, \R^{n+1})$ which vanish on $\partial\Manifold\cup\ShapeEmbedding(\Manifold)$.
	This is inevitable, since a criterion for successful application of volume mesh regularization for shape optimization routines has to leave optimal or intermediate shapes $\ShapeEmbedding(\Manifold)$ invariant.
	If $\PrShpDeriv\TargetPrShp^\HoldAll(\VolumeEmbedding)[V]$ was used for regularizing the gradient in style of an added source term (cf. \cref{Eq_GradientSystemTangTang}) for general directions $V\in H^1(\HoldAll, \R^{n+1})$, it would most certainly alter shapes and interfere with shape optimization. 
	Also, it is not possible to put Dirichlet conditions on $\ShapeEmbedding(\Manifold)$, or to use a restricted space of test function as in \cref{Eq_GradientSystemVolSolo}.
	Doing so would prohibit shape optimization itself.
	Hence we have to modify $\PrShpDeriv\TargetPrShp^\HoldAll$, such that general directions $V\in H^1(\HoldAll, \R^{n+1})$ are applicable as test functions, while the shape at hand is preserved.
	To resolve this problem, we introduce a projection
	\begin{equation}\label{Eq_ProjectionZeroBoundGeneral}
		\operatorname{Pr}_{H^1_{\partial\Manifold\cup\ShapeEmbedding(\Manifold)}}: H^1 \rightarrow  H^1_{\partial\Manifold\cup\ShapeEmbedding(\Manifold)},
	\end{equation}
	which is demanded to be the identity on $H^1_{\partial\Manifold\cup\ShapeEmbedding(\Manifold)}$.
	We leave the operator projecting a given direction $V\in H^1$ onto $H^1_{\partial\Manifold\cup\ShapeEmbedding(\Manifold)}$ general.
	Suitable options include the projection via solution of a least squares problem
	\begin{equation}
		\operatorname{Pr}_{H^1_{\partial\Manifold\cup\ShapeEmbedding(\Manifold)}}(V) := \underset{W \in H^1_{\partial\Manifold\cup\ShapeEmbedding(\Manifold)}}{\operatorname{arg} \min} \frac{1}{2}\Vert W - V \Vert^2_{H^1}.
	\end{equation}
	In practice, it is feasible to construct a projection \cref{Eq_ProjectionZeroBoundGeneral} by using a finite element representation of $V$ and setting coefficients of basis functions on the discretization of $\ShapeEmbedding(\Manifold)$ to zero.
	With this projection operator, we can extend the volume tracking pre-shape derivative $\PrShpDeriv\TargetPrShp^\HoldAll$ to the space $H^1_{\partial\Manifold\cup\ShapeEmbedding(\Manifold)}$ by using 
	\begin{equation}
		\PrShpDeriv\TargetPrShp^\HoldAll(\VolumeEmbedding)[\operatorname{Pr}_{H^1_{\partial\Manifold\cup\ShapeEmbedding(\Manifold)}}(\cdot)]: H^1(\HoldAll, \R^{n+1}) \rightarrow \R,\; V \rightarrow \PrShpDeriv\TargetPrShp^\HoldAll(\VolumeEmbedding)[\operatorname{Pr}_{H^1_{\partial\Manifold\cup\ShapeEmbedding(\Manifold)}}(V)],
	\end{equation}
	with $\PrShpDeriv\TargetPrShp^\HoldAll$ as in \cref{Eq_PreShapeDerSplittingVol}.
	
	Now we can formulate the fully regularized pre-shape gradient system for simultaneous volume-, shape mesh quality and shape optimization.
	We motivate the combined gradient system with the same formal calculations as in the bi-level formulation for shape quality regularization \cref{Remark_BiLevelTangential}, which are inspired by \cite{savard1994steepest}.
	Given a symmetric, positive-definite bilinear form $a(.,.)$, the gradient system takes the form
	\begin{equation}\label{Eq_GradientSystemTangVol}
		a(U, V) = 	
		\ShpDeriv\TargetShp(\Shape)[V]
		\;+\;
		\alpha^\tau\cdot\langle g_\ShapeEmbedding^\TangentSpaceShape, V \rangle 
		\;+\;
		\alpha^\HoldAll\cdot \PrShpDeriv\TargetPrShp^\HoldAll(\VolumeEmbedding)[\operatorname{Pr}_{H^1_{\partial\Manifold\cup\ShapeEmbedding(\Manifold)}}(V)]
		\quad \forall V\in H^1(\HoldAll, \R^{n+1}),
	\end{equation}
	where $g_\ShapeEmbedding^\TangentSpaceShape$ is the tangential component of shape regularization \cref{Eq_PreShapeDerTrackingTangential} and $\ShpDeriv\TargetShp(\Shape)$ is the shape derivative of the original shape objective with $\Shape=\ProjectionCanonical(\ShapeEmbedding)$.
	Notice that the fully regularized pre-shape gradient system \cref{Eq_GradientSystemTangVol} looks similar to the shape gradient system \cref{Eq_GradientSystemOriginal} of the original problem, differing only by two added force terms on the right hand side.
	These force terms can be thought of as regularizing terms to the original shape gradient.
	In practice, this means simultaneous volume and shape mesh quality improvement for shape optimization amounts to adding two terms on the right hand side of the gradient system.
	Hence they can also be viewed as a (pre-)shape gradient regularization by added force terms.
	
	\begin{theorem}[Volume and Shape Regularized Problems]\label{Thrm_ReguVolTang}
		Let shape optimization problem \cref{Eq_GeneralShapeProblem} be shape differentiable and have a minimizer $\Shape\in B_e^n$.
		For shape and volume parameterization tracking, let the assumptions of both \cref{Thrm_ReguTang} and \cref{Thrm_ExistenceVolParamTracking} be true.
		
		Then there exists a $\ShapeEmbedding\in \ProjectionCanonical(\ShapeEmbedding) = \Shape \subset \operatorname{Emb}(\Manifold, \HoldAll)$ and a  $\VolumeEmbedding\in\operatorname{Diff}_{\partial\HoldAll\cup\ShapeEmbedding(\Manifold)}(\HoldAll)$  
		minimizing the volume and shape regularized bi-level problem \cref{Eq_GeneralPreShapeProblemVolumeTang}.
		
		The fully regularized pre-shape gradient $U$ from system \cref{Eq_GradientSystemTangVol} is consistent with the modified shape regularized gradient $\tilde{U}$ from system \cref{Eq_GradientSystemTangTang} and volume tracking pre-shape gradient $U^{\TargetPrShp^\HoldAll}$ from system \cref{Eq_GradientSystemTangFull}, in the sense that
		\begin{equation}\label{Eq_GradientSystemConsistencyVolTang}
		U = 0 \iff \tilde{U} = 0 \text{ and } U^{\TargetPrShp^\HoldAll} = 0.
		\end{equation}
		In particular, if $U=0$ is satisfied, the necessary first order conditions for volume tracking  \cref{Eq_GeneralPreShapeProblemVolume}, shape tracking  \cref{Eq_GeneralPreShapeProblemTangential} and the original problem \cref{Eq_GeneralShapeProblem} are all satisfied simultaneously.
	\end{theorem}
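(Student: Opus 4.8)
For existence, the plan is to exploit the nested structure of the bi-level problem \cref{Eq_GeneralPreShapeProblemVolumeTang} and defer entirely to the two preceding theorems. Since the original problem \cref{Eq_GeneralShapeProblem} admits a minimizer $\Shape\in B_e^n$ by assumption, \cref{Thrm_ReguTang} supplies a pre-shape $\ShapeEmbedding\in\ProjectionCanonical(\ShapeEmbedding)=\Shape$ minimizing the lower level problem, which is exactly the shape regularized problem \cref{Eq_GeneralPreShapeProblemTangential}. Fixing this $\ShapeEmbedding$ together with its shape $\ShapeEmbedding(\Manifold)$, the upper level problem reduces to the volume parameterization tracking problem \cref{Eq_GeneralPreShapeProblemVolume}, for which \cref{Thrm_ExistenceVolParamTracking} furnishes a $\VolumeEmbedding\in\operatorname{Diff}_{\partial\HoldAll\cup\ShapeEmbedding(\Manifold)}(\HoldAll)$ attaining $\TargetPrShp^\HoldAll(\VolumeEmbedding)=0$. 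Because $\TargetPrShp^\HoldAll\geq 0$ by its quadratic form, this $\VolumeEmbedding$ is a global minimizer of the upper level, and hence the pair $(\ShapeEmbedding,\VolumeEmbedding)$ solves \cref{Eq_GeneralPreShapeProblemVolumeTang}.

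For the consistency statement \cref{Eq_GradientSystemConsistencyVolTang}, the key is to decouple the three force terms on the right hand side of \cref{Eq_GradientSystemTangVol} by their supports. The implication \enquote{$\Leftarrow$} is a direct substitution: if $\tilde U=0$ then $\ShpDeriv\TargetShp(\Shape)[V]+\alpha^\tau\langle g_\ShapeEmbedding^\TangentSpaceShape,V\rangle=0$ for all $V$, while $U^{\TargetPrShp^\HoldAll}=0$ from the volume tracking system \cref{Eq_GradientSystemVolSolo} gives $\PrShpDeriv\TargetPrShp^\HoldAll(\VolumeEmbedding)[W]=0$ for every $W\in H^1_{\partial\Manifold\cup\ShapeEmbedding(\Manifold)}$; since $\operatorname{Pr}_{H^1_{\partial\Manifold\cup\ShapeEmbedding(\Manifold)}}(V)$ always lands in this subspace, the full right hand side of \cref{Eq_GradientSystemTangVol} vanishes for all $V$, and positive-definiteness of $a(\cdot,\cdot)$ forces $U=0$. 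For the non-trivial direction \enquote{$\implies$}, I would first test only with $V\in H^1_{\partial\Manifold\cup\ShapeEmbedding(\Manifold)}$. By the pre-shape structure theorem \cite[Thrm. 1]{luft2020pre}, the shape derivative $\ShpDeriv\TargetShp(\Shape)$ depends only on the normal component of $V$ along $\ShapeEmbedding(\Manifold)$, and the tangential term $g_\ShapeEmbedding^\TangentSpaceShape$ is an integral over $\ShapeEmbedding(\Manifold)$ (cf. \cref{Eq_PreShapeDerTrackingTangential}); both annihilate whenever $V$ vanishes on $\ShapeEmbedding(\Manifold)$. Since $\operatorname{Pr}_{H^1_{\partial\Manifold\cup\ShapeEmbedding(\Manifold)}}$ acts as the identity on such $V$, the condition $U=0$ collapses to $\alpha^\HoldAll\,\PrShpDeriv\TargetPrShp^\HoldAll(\VolumeEmbedding)[V]=0$ on $H^1_{\partial\Manifold\cup\ShapeEmbedding(\Manifold)}$, which is the defining equation of \cref{Eq_GradientSystemVolSolo} and yields $U^{\TargetPrShp^\HoldAll}=0$.

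With the volume term now known to vanish for every projected argument, the full equation for arbitrary $V$ reduces to $\ShpDeriv\TargetShp(\Shape)[V]+\alpha^\tau\langle g_\ShapeEmbedding^\TangentSpaceShape,V\rangle=0$, which is precisely the right hand side defining $\tilde U$ in \cref{Eq_GradientSystemTangTang}, so $\tilde U=0$. The \enquote{in particular} claim then follows by chaining: feeding $\tilde U=0$ into \cref{Thrm_ReguTang} recovers the first order conditions for the shape tracking problem \cref{Eq_GeneralPreShapeProblemTangential} and the original problem \cref{Eq_GeneralShapeProblem}, while $U^{\TargetPrShp^\HoldAll}=0$ is itself the first order condition for volume tracking \cref{Eq_GeneralPreShapeProblemVolume}.

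The main obstacle I anticipate is making the support-based decoupling fully rigorous. One must verify that restricting to directions vanishing on $\ShapeEmbedding(\Manifold)$ genuinely annihilates both $\ShpDeriv\TargetShp$ and $g_\ShapeEmbedding^\TangentSpaceShape$ at once, i.e. that the orthogonality of shape and pre-shape supports from \cite[Thrm. 1]{luft2020pre} is compatible with the interior-hypersurface trace used to define $H^1_{\partial\Manifold\cup\ShapeEmbedding(\Manifold)}$, and that the projection operator \cref{Eq_ProjectionZeroBoundGeneral} is indeed the identity on this subspace with range exactly equal to it. These properties are what guarantee that no part of the volume force $\PrShpDeriv\TargetPrShp^\HoldAll$ leaks into the surface directions carrying $\ShpDeriv\TargetShp$ and $g_\ShapeEmbedding^\TangentSpaceShape$, and conversely that the surface terms contribute nothing when $V$ is supported off $\ShapeEmbedding(\Manifold)$; this mutual orthogonality is the mechanism that lets a single system \cref{Eq_GradientSystemTangVol} encode the stationarity of all three subproblems simultaneously.
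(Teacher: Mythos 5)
Your proposal is correct and follows essentially the same route as the paper's own proof: existence by chaining \cref{Thrm_ReguTang} for the lower level with \cref{Thrm_ExistenceVolParamTracking} for the upper level, and consistency by decoupling the three force terms via their supports, testing first with directions in $H^1_{\partial\Manifold\cup\ShapeEmbedding(\Manifold)}$ to isolate the volume term and then concluding $\tilde{U}=0$ from the remaining surface terms. The only difference is presentational: you flag the rigor of the support orthogonality and the projection's identity/range properties as a potential obstacle, whereas the paper takes these directly from the structure theorem \cite[Thrm. 1]{luft2020pre} and the definition of $\operatorname{Pr}_{H^1_{\partial\Manifold\cup\ShapeEmbedding(\Manifold)}}$ without further comment.
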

	\begin{proof}
		For the proof, we need to guarantee existence of solutions to \cref{Eq_GeneralPreShapeProblemVolumeTang}, consistency of gradients \cref{Eq_GradientSystemConsistencyVolTang} and conclude the last assertion concerning necessary first order conditions.
		
		First, let the assumptions of \cref{Thrm_ReguVolTang} be given.
		This includes all assumptions made on $\Manifold$, $\HoldAll$ and functions $g^\Manifold, g^\HoldAll, f_\ShapeEmbedding, f^\HoldAll_{\ShapeEmbedding(\Manifold)}$ summarized in \cref{Thrm_ReguTang} and \cref{Thrm_ExistenceVolParamTracking}.
		Fix a solution $\Shape\in B_e^n$ to the original problem \cref{Eq_GeneralShapeProblem}.
		With the theorem for shape regularized problems \cref{Thrm_ReguTang}, we have guaranteed existence of a solution $\ShapeEmbedding\in\ProjectionCanonical(\ShapeEmbedding)=\Shape\subset\operatorname{Emb}(\Manifold, \HoldAll)$ to the lower level problem of \cref{Eq_GeneralPreShapeProblemVolumeTang}, which coincides with the shape regularized problem \cref{Eq_GeneralPreShapeProblemTangential}.
		Let us select such a solution $\ShapeEmbedding\in\operatorname{Emb}(\Manifold, \HoldAll)$.
		This fixes the set of solution candidates $\operatorname{Diff}_{\partial\HoldAll\cup\ShapeEmbedding(\Manifold)}(\HoldAll)$.
		The existence theorem for volume tracking with invariant shapes \cref{Thrm_ExistenceVolParamTracking} gives existence of a pre-shape $\VolumeEmbedding\in\operatorname{Diff}_{\partial\HoldAll\cup\ShapeEmbedding(\Manifold)}(\HoldAll)$ solving the upper level problem of \cref{Eq_GeneralPreShapeProblemVolumeTang} while leaving $\ShapeEmbedding(\Manifold)$ invariant.
		This proves existence of solutions to the volume and shape regularized bi-level problem \cref{Eq_GeneralPreShapeProblemVolumeTang}.
		
		For consistency of pre-shape gradients \cref{Eq_GradientSystemConsistencyVolTang}, we first prove '$\Rightarrow$' by assuming $U=0$.
		The right hand side of the volume and shape regularized gradient system \cref{Eq_GradientSystemTangVol} consists of three added functionals $\ShpDeriv\TargetShp(\Shape)$, 
		$g_\ShapeEmbedding^\TangentSpaceShape$ and  $\PrShpDeriv\TargetPrShp^\HoldAll(\VolumeEmbedding)[\operatorname{Pr}_{H^1_{\partial\Manifold\cup\ShapeEmbedding(\Manifold)}}(\cdot)]$.
		Due to $U=0$, the full right hand side of \cref{Eq_GradientSystemConsistencyVolTang} vanishes in particular for all directions $V\in H^1_{\partial\Manifold\cup\ShapeEmbedding(\Manifold)}(\HoldAll, \R^{n+1})\subset H^1(\HoldAll, \R^{n+1})$.
		Our functionals $\ShpDeriv\TargetShp(\Shape)$ and 
		$g_\ShapeEmbedding^\TangentSpaceShape$ are only supported on directions $V$ not vanishing on $\ShapeEmbedding(\Manifold)$ due to the structure theorem for pre-shape derivatives \cite[Thrm. 1]{luft2020pre} and their underlying pre-shape space being $\operatorname{Emb}(\Manifold, \HoldAll)$.
		This implies vanishing of $\PrShpDeriv\TargetPrShp^\HoldAll(\VolumeEmbedding)[\operatorname{Pr}_{H^1_{\partial\Manifold\cup\ShapeEmbedding(\Manifold)}}(\cdot)]$ for all $V\in H^1_{\partial\Manifold\cup\ShapeEmbedding(\Manifold)}(\HoldAll, \R^{n+1})$, which in turn gives a vanishing right hand side for \cref{Eq_GradientSystemVolSolo} and $U^{\TargetPrShp^\HoldAll}=0$.
		This also means that individually considered  $\PrShpDeriv\TargetPrShp^\HoldAll(\VolumeEmbedding)[\operatorname{Pr}_{H^1_{\partial\Manifold\cup\ShapeEmbedding(\Manifold)}}(\cdot)]$ vanishes for all directions $V\in H^1(\HoldAll, \R^{n+1})$.
		Thus the remaining part $\ShpDeriv\TargetShp(\Shape)+ \alpha^\tau\cdot \langle g_\ShapeEmbedding^\TangentSpaceShape, \cdot \rangle$ vanishes for all $V\in H^1(\HoldAll, \R^{n+1})$ as well, which immediately gives us $\tilde{U}=0$ by \cref{Eq_GradientSystemTangTang}.
		
		For '$\Leftarrow$', let us assume $\tilde{U}=U^{\TargetPrShp^\HoldAll}=0$.
		Since $\tilde{U}$ vanishes, we see from \cref{Eq_GradientSystemTangTang} that $\ShpDeriv\TargetShp(\Shape)+ \alpha^\tau\cdot \langle g_\ShapeEmbedding^\TangentSpaceShape, \cdot \rangle$ has to vanish too for all $V\in H^1(\HoldAll, \R^{n+1})$.
		And as $\PrShpDeriv\TargetPrShp^\HoldAll$ vanishes for all $V\in H^1_{\partial\Manifold\cup\ShapeEmbedding(\Manifold)}$,  considering the projection operator gives that $\PrShpDeriv\TargetPrShp^\HoldAll(\VolumeEmbedding)[\operatorname{Pr}_{H^1_{\partial\Manifold\cup\ShapeEmbedding(\Manifold)}}(\cdot)]$ vanishes for all $V\in H^1(\HoldAll, \R^{n+1})$.
		Together, this means the complete right hand side of \cref{Eq_GradientSystemTangVol} vanishes, which gives us $U=0$ and establishes consistency \cref{Eq_GradientSystemConsistencyVolTang}.
		
		The last assertion concerning necessary optimality conditions for volume tracking  \cref{Eq_GeneralPreShapeProblemVolume}, shape tracking  \cref{Eq_GeneralPreShapeProblemTangential} and the original problem \cref{Eq_GeneralShapeProblem} are a consequence of consistency \cref{Eq_GradientSystemConsistencyVolTang}.
		If $U=0$, we immediately get $U^{\TargetPrShp^\HoldAll}=0$, which implies the necessary first order condition for volume tracking via \cref{Eq_GradientSystemVolSolo}.
		Consistency \cref{Eq_GradientSystemConsistencyVolTang} and vanishing $U=0$ also give $\tilde{U}=0$.
		The last part of main theorem \cref{Thrm_ReguTang} for shape regularized problems then tells us that necessary first order conditions for shape tracking  \cref{Eq_GeneralPreShapeProblemTangential} and the original problem \cref{Eq_GeneralShapeProblem} are satisfied as well.
	\end{proof}
	
	The theorem for volume and shape regularized problems \cref{Thrm_ReguVolTang} is of great importance, since it guarantees the existence of solutions to the regularized problem \cref{Eq_GeneralPreShapeProblemVolumeTang} for a given shape optimization problem.
	It also tells us, that the shape $\Shape$ solving the original problem \cref{Eq_GeneralShapeProblem} remains unchanged by the volume and shape regularization.
	This is due to two invariance properties. 
	For one, it stems from guaranteed existence of a minimizing pre-shape $\ShapeEmbedding$ in the fiber $\ProjectionCanonical(\ShapeEmbedding)$ corresponding to the optimal shape $\Shape$.
	And secondly, the optimal pre-shape $\VolumeEmbedding$ representing the parameterization of the hold-all domain $\HoldAll$ comes from $\operatorname{Diff}_{\partial\HoldAll\cup\ShapeEmbedding(\Manifold)}(\HoldAll)$, which means it leaves the optimal shape $\ShapeEmbedding(\Manifold)$ pointwise invariant.
	Furthermore \cref{Thrm_ReguVolTang} justifies the use of pre-shape gradient system \cref{Eq_GradientSystemTangVol} modified by force terms for volume and shape regularization.
	This gives a practical and straight forward applicability of volume and shape regularization strategies in shape optimization problems.

	\begin{remark}[Numerical Feasibility]
		Our second criterion for a good regularization strategy also holds.	
		Calculation of a regularized gradient via \cref{Eq_GradientSystemTangVol} is numerically feasible, since it does not require additional solves of (non-)linear systems if compared to the standard shape gradient system \cref{Eq_GradientSystemOriginal}.
		In fact, the volume and shape regularized pre-shape gradient is a combination of three gradients
		\begin{equation}
			U = U^\TargetShp + U^\TangentSpaceShape + U^{\TargetPrShp^\HoldAll},
		\end{equation}
		coming from the original problem \cref{Eq_GradientSystemOriginal}, modified shape tracking \cref{Eq_GradientSystemTangModifiedSolo} and volume tracking \cref{Eq_GradientSystemVolSolo}.
		Instead of solving three systems separately, our approach permits a combined solution of only one linear system with the exact same size of the gradient system \cref{Eq_GradientSystemOriginal} to the original problem.
		Together with invariance of the optimal shape, both criteria for a satisfactory mesh regularization technique are achieved.
	\end{remark}
	\begin{remark}[Applicability of Volume and Shape Regularization for General Shape Optimization Problems]\label{Remark_ValidityVolTangentialGenerality}
		We want to remind the reader, that there is no need for the original shape optimization problem \cref{Eq_GeneralShapeProblem} to have a specific structure.
		It solely needs to be shape differentiable and to have a solution in order to successfully apply volume and shape regularization.
		The exact same assertions made in \cref{Remark_ValidityTangentialGenerality} for shape regularization are also true for simultaneous volume and shape regularization. 
	\end{remark}
	\begin{remark}[Volume Regularization without Shape Regularization]
		As mentioned in \cref{Remark_VolReguNoTangProblem}, it is of course possible to apply volume mesh regularization without shape regularization.
		A result similar to \cref{Thrm_ReguVolTang} can be formulated for the volume regularized problem \cref{Eq_GeneralPreShapeProblemVolumeNoTang} by following analogous arguments.
		In particular, the volume regularized pre-shape gradient system takes the form 
		\begin{equation}
			a(U^{\TargetShp+\TargetPrShp^\HoldAll}, V) = 	
			\ShpDeriv\TargetShp(\Shape)[V]
			\;+\;
			\alpha^\HoldAll\cdot \PrShpDeriv\TargetPrShp^\HoldAll(\VolumeEmbedding)[\operatorname{Pr}_{H^1_{\partial\Manifold\cup\ShapeEmbedding(\Manifold)}}(V)]
			\quad \forall V\in H^1(\HoldAll, \R^{n+1}).
		\end{equation}
		Also, the according consistency of gradients is given by 
		\begin{equation}
			U^{\TargetShp+\TargetPrShp^\HoldAll}=0 \iff U^{\TargetShp}=0 \text{ and } U^{\TargetPrShp^\HoldAll}=0,
		\end{equation}
		for pre-shape gradients $U^{\TargetShp}$ of the original problem \cref{Eq_GradientSystemOriginal} and $U^{\TargetPrShp^\HoldAll}$ volume tracking problem \cref{Eq_GradientSystemVolSolo}.
		The necessary first order conditions for \cref{Eq_GeneralShapeProblem} and \cref{Eq_GeneralPreShapeProblemVolume} if $U^{\TargetShp+\TargetPrShp^\HoldAll}=0$.
	\end{remark}
\section{Implementation of Methods}\label{Section_ImplementationPreShapeRegu}
The theoretical results of shape and volume regularization for shape optimization problems given in \cref{SubSection_TangentialMeshQOpt} and \cref{SubSection_VolumeMeshQOpt} was given in an abstract setting, where the objects involved remained general.
In this section, we give an example which displays the regularization approaches for mesh quality in practice.
The abstract systems and functionals will be stated explicitly, so that the user can apply regularization by referencing the exemplary problem as a guideline.
In \cref{SubSubSection_NumericsModelProblem} we elaborate the process of regularizing a model problem.
We also propose an additional modification for simultaneous shape and volume regularization, which allows for movement of the boundary of the hold-all domain $\partial\HoldAll$ to increase mesh quality.
Thereafter, we present numerical results \cref{SubSubSection_NumericsResults}, comparing several (un-)regularized optimization approaches.
To be more specific, we test two bilinear forms and four regularizations of gradients for a standard gradient descent algorithm with a backtracking line search.
The two bilinear forms are given by the linear elasticity as found in \cite{SchulzSiebenborn} and the $p$-Laplacian inspired by \cite{hinze2021pLap} and studies found in \cite{hinze2021W1Infty}.
Different gradients tested will be the unregularized, the shape regularized, the shape and volume regularized, and the shape and volume regularized one with varying outer boundary.

\subsection{Model Problem and Application of Pre-Shape Mesh Quality Regularization}\label{SubSubSection_NumericsModelProblem}
\subsubsection{Model Problem Formulation and Regularization}
In this section we formulate a model problem to test our pre-shape regularization strategies.
For this, we choose a tracking type shape optimization problem in two dimensions, constrained by a Poisson equation with varying source term.
To highlight the difference of shape and pre-shape calculus techniques, we formulate and test the model problem in two ways.
First, we use the classical shape space framework. 
The second reformulation uses the pre-shape setting, where pre-shape parameterization tracking regularizers can be added. 

To start, we set the model manifold for shapes and the hold-all domain to
\begin{equation}
	\Manifold = S^{0.35}_{(0.5, 0.5)} \text{ and } \HoldAll = [0, 1]\times[0, 2.35].
\end{equation}
The model manifold $\Manifold\subset\HoldAll$ is a sphere with radius $0.35$ centered in $(0.5, 0.5)$, consisting of $63$ surface nodes and edges.
It is embedded in the hold-all domain $\HoldAll$, which is given by a rectangle $[0,1]\times[0, 2.35]$ with non-trivial boundary $\partial\HoldAll$.
The hold-all domain $\HoldAll$ consists of $1402$
nodes and has $741$ volume cells.
They are illustrated in \cref{Fig_NumStartTargetBump3}.
This problem is hard because solution requires a large deformation at a single local region of the initial shape.
Since the mesh is locally refined near the shape, nearby cells are especially prone to degeneration by large deformations.

Notice that the manifold $\Manifold$ acts as an initial shape for the optimization routines to start. 
This approach is always applicable, i.e. the manifold $\Manifold$ for the pre-shape space $\operatorname{Emb}(\Manifold, \HoldAll)$ can always be picked as the initial shape.
With this, the corresponding starting pre-shape is the identity $\operatorname{id}_\Manifold$ of $\Manifold$.

For the shape optimization problem, we employ a piecewise constant source term varying dependent on the shape
\begin{align}\label{Eq_PoissonSourceTerm}
r_{\ShapeEmbedding(\Manifold)}(x) = \begin{cases}
r_1\in \R \; &\text{ for } x\in \overline{\HoldAll^\text{out}}  \\ 
r_2\in \R \; &\text{ for } x\in \HoldAll^\text{in}. 
\end{cases}
\end{align}
A perimeter regularization with parameter $\nu >0$ is added as well.
Combining this, the optimization problem takes the form
\begin{align}\label{Eq_NumericalPoissonTrClassical}
\begin{split}
\underset{\Shape\in B_e^n}{\min}\; \frac{1}{2}\int_{\HoldAll}\vert y &- \bar{y} \vert^2 \; \diff x + \nu \int_{\Shape}1\; \diff s \\
\text{s.t. }-\Delta y &= r_\Shape \quad \text{in } \,\operatorname{int}(\HoldAll) \\
y &= 0 \quad \;\text{ on } \partial\HoldAll.
\end{split}
\end{align}
To calculate the target $\bar{y}\in H^1(\HoldAll)$ of the shape problem, we use the source term \cref{Eq_PoissonSourceTerm} and solve the Poisson problem for the target shape pictured in \cref{Fig_NumStartTargetBump3}.
Problem \cref{Eq_NumericalPoissonTrClassical} is formulated using the classical shape space approach, since the control variable $\Shape$ is stemming from the shape space $B_e^n$, and represents \cref{Eq_GeneralShapeProblem} from the theoretical \cref{Section_TheoryPreShapeRegularization}.

Next, we reformulate \cref{Eq_NumericalPoissonTrClassical} using pre-shapes, while we also add the regularizing term $\TargetPrShp^\tau$ for shape mesh quality with parameter $\alpha^\tau > 0$ 
\begin{align}\label{Eq_Numerics_ProblemTangReg}
\begin{split}
\underset{\ShapeEmbedding\in \operatorname{Emb}(\Manifold, \HoldAll)}{\min}\; \frac{1}{2}\int_{\HoldAll}\vert y &- \bar{y} \vert^2 \; \diff x + \nu \int_{\ShapeEmbedding(\Manifold)}1\; \diff s + \frac{\alpha^\tau}{2}\int_{\ShapeEmbedding(\Manifold)}
\Big(
g^\Manifold \circ\ShapeEmbedding^{-1}(s) \cdot \det D^\tau \ShapeEmbedding^{-1}(s)
-
f_{\ShapeEmbedding}(s) 
\Big)^2 \; \diff s  \\
\text{s.t. }-\Delta y &= r_{\ShapeEmbedding(\Manifold)} \quad \text{in } \,\HoldAll \\
y &= 0 \qquad \;\ \text{ on } \partial\HoldAll.
\end{split}
\end{align}
We remind the reader, that the regularizer can only be added in the pre-shape context, since it is not shape differentiable. 

Technically, the combined volume and shape mesh quality regularized problem is given by formulating a bi-level problem with volume regularizing objective $\TargetPrShp^\HoldAll$ as the upper level problem and lower level problem \cref{Eq_Numerics_ProblemTangReg}, i.e.
\begin{align}\label{Eq_Numerics_ProblemVolTangReg}
\begin{split}
\underset{\VolumeEmbedding\in \operatorname{Diff}_{\partial\HoldAll\cup\ShapeEmbedding(\Manifold)}(\HoldAll)}{\min}\; \frac{\alpha^\HoldAll}{2}&\int_{\HoldAll}
\Big(
g^\HoldAll \circ\VolumeEmbedding^{-1}(x) \cdot \det D \VolumeEmbedding^{-1}(x)
-
f^\HoldAll_{\ShapeEmbedding(\Manifold)}(x) 
\Big)^2 \; \diff x\\
\text{s.t. } \ShapeEmbedding = \underset{\ShapeEmbedding\in \operatorname{Emb}(\Manifold, \HoldAll)}{\operatorname{arg} \min}&\; \frac{1}{2}\int_{\HoldAll}\vert y - \bar{y} \vert^2 \; \diff x + \nu \int_{\ShapeEmbedding(\Manifold)}1\; \diff s \\
&+ \frac{\alpha^\tau}{2}\int_{\ShapeEmbedding(\Manifold)}
\Big(
g^\Manifold \circ\ShapeEmbedding^{-1}(s) \cdot \det D^\tau \ShapeEmbedding^{-1}(s)
-
f_{\ShapeEmbedding}(s) 
\Big)^2 \; \diff s  \\
\text{s.t. }-\Delta y &= r_{\ShapeEmbedding(\Manifold)} \quad \text{in } \,\HoldAll \\
y &= 0 \qquad \;\ \text{ on } \partial\HoldAll.
\end{split}
\end{align}
We remind the reader that, despite its intimidating form, bi-level problem \cref{Eq_Numerics_ProblemVolTangReg} has guaranteed existence of solutions by \cref{Thrm_ReguVolTang}.
The same is true for the shape regularized problem \cref{Eq_Numerics_ProblemTangReg} by \cref{Thrm_ReguTang}.

\subsubsection{Constructing Initial and Target Node Densities $f$ and $g$}\label{SubSubSection_NumericsGandFTargetConstruction}
To explicitly construct the regularizing terms, we need initial node densities $g^\Manifold \in H^1(\Manifold, (0,\infty))$ of $\Manifold$ and $g^\HoldAll \in H^1(\HoldAll, (0,\infty))$ of $\HoldAll$.
Also, we need to specify target node densities $f_\ShapeEmbedding$ and $f^\HoldAll_{\ShapeEmbedding(\Manifold)}$, which describe the cell volume structure of optimal meshes representing $\ShapeEmbedding(\Manifold)$ and $\HoldAll$.

The approach used in this work is to represent the initial point distributions $g^\Manifold$ and $g^\HoldAll$ by using a continuous Galerkin Ansatz with linear elements similar to \cite[Ch. 3]{luft2020pre}. 
Degrees of freedom are situated at the mesh vertices and set to the average of inverses of surrounding cell volumes, i.e.
\begin{equation}\label{Eq_Numerics_gEstimation}
g(p) = \frac{1}{\vert \mathcal{C}\vert}\cdot\sum_{C\in \mathcal{C}}\frac{1}{\operatorname{vol}(C)}.
\end{equation}
In the shape case $g=g^\Manifold$, 
a vertex $p$ is part of the initial discretized shape $\Manifold$ and $\mathcal{C}$ is the set of its neighboring cells $C$ in $\Manifold$.
For $1$-dimensional $\Manifold$ cells $C$ correspond to edges, for $2$-dimensional $\Manifold$ to faces.
In the volume mesh case $g=g^\HoldAll$, $p$ is a vertex of the initial discretized hold-all domain  $\HoldAll$ and $\mathcal{C}$ is the set of its neighboring volume cells $C$ in $\HoldAll$.

Next, we specify a way to construct target parameterizations $f_\ShapeEmbedding$ and $f^\HoldAll_{\ShapeEmbedding(\Manifold)}$, together with their pre-shape material derivatives.
We define a target for shape parameterization tracking $f_\ShapeEmbedding$ by a global target field $\RieszEnergyExtForce: \HoldAll \rightarrow (0,\infty)$.
In order to satisfy normalization condition \cref{Assumption_fgNormalization}, which is necessary for existence of solutions and stable algorithms, a normalization is included.
This gives 
\begin{equation}\label{Eq_Numerics_FTargetShape}
f_{\ShapeEmbedding} 
=
\frac{\int_{\Manifold}g^{\Manifold}\;\diff s}{\int_{\ShapeEmbedding(\Manifold)}\RieszEnergyExtForce_{\vert\ShapeEmbedding(\Manifold)}\;\diff s}\cdot \RieszEnergyExtForce_{\vert\ShapeEmbedding(\Manifold)}.
\end{equation}
With this construction, the targeted parameterization of $\ShapeEmbedding(\Manifold)$ depends on its location and shape in $\HoldAll$, as $\RieszEnergyExtForce: \HoldAll \rightarrow (0,\infty)$ is allowed to vary on the whole domain.

The according material derivative is derived in \cite{luft2020pre} and has closed form 
\begin{equation}\label{Eq_Numerics_MaterialDerivFShape}
	\PrShpDeriv_m(f_\ShapeEmbedding)[V] = 
	-\frac{\int_{\Manifold}g^{\Manifold}\;\diff s}{\big(\int_{\ShapeEmbedding(\Manifold)}\RieszEnergyExtForce\;\diff s\big)^2}
	\cdot\RieszEnergyExtForce\cdot
	\int_{\ShapeEmbedding(\Manifold)} \frac{\partial \RieszEnergyExtForce}{\partial n}\cdot\langle V, n \rangle \;\diff s
	+
	\frac{\int_{\Manifold}g^{\Manifold}\;\diff s}{\int_{\ShapeEmbedding(\Manifold)}\RieszEnergyExtForce\;\diff s} 
	\cdot
	\nabla \RieszEnergyExtForce^T V.
\end{equation}
Notice that \cref{Eq_Numerics_MaterialDerivFShape} includes both normal and tangential components.
However, only its tangential component is needed if regularized gradient systems \cref{Eq_GradientSystemTangTang} and \cref{Eq_GradientSystemTangVol} are applied.
We will write down explicit right hand sides to gradient systems for our exemplary problem in \cref{SubSubSection_NumericsGradientSystems}.

For a volume target $f^\HoldAll_{\ShapeEmbedding(\Manifold)}$, we have to satisfy the different normalization condition \cref{Assumption_TheoremVolfNormed} to guarantee existence of solutions.
We propose to use a field $q^\HoldAll: \HoldAll \rightarrow (0,\infty)$ defined on the hold-all domain.
Then, an according target can be defined as
\begin{equation}\label{Eq_Numerics_FTargetVol}
	f^\HoldAll_{\ShapeEmbedding(\Manifold)} = 
	\begin{cases}
		\frac{\int_{\HoldAll^{\text{in}}_\ShapeEmbedding}g^{\HoldAll}\;\diff x}{\int_{\HoldAll^{\text{in}}_\ShapeEmbedding}\RieszEnergyExtForce^\HoldAll\;\diff x}\cdot \RieszEnergyExtForce^\HoldAll 
		\quad \;\,\text{ for } x \in \HoldAll^{\text{in}}_\ShapeEmbedding\\
		\frac{\int_{\HoldAll^{\text{out}}_\ShapeEmbedding}g^{\HoldAll}\;\diff x}{\int_{\HoldAll^{\text{out}}_\ShapeEmbedding}\RieszEnergyExtForce^\HoldAll\;\diff x}\cdot \RieszEnergyExtForce^\HoldAll 
		\quad \text{ for } x \in \overline{\HoldAll^{\text{out}}_\ShapeEmbedding}.
	\end{cases}
\end{equation}
This is different to the construction of targets  $f_\ShapeEmbedding$ for embedded shapes, since the function $f^\HoldAll_{\ShapeEmbedding(\Manifold)}$ does only change in order to guarantee normalization condition \cref{Assumption_TheoremVolfNormed}.
It cannot vary due to the change of shape of $\HoldAll$, which remains fixed.
This stays in contrast to the situation for $\ShapeEmbedding(\Manifold)\subset\HoldAll$, which can change its position in $\HoldAll$.
Also notice that $f^\HoldAll_{\ShapeEmbedding(\Manifold)}$ as defined in \cref{Eq_Numerics_FTargetVol} can be non-continuous on the shape $\ShapeEmbedding(\Manifold)$.
However, in existence and consistency results \cref{Thrm_ExistenceVolParamTracking} and \cref{Thrm_ReguVolTang} we have not demanded continuity or smoothness of $f^\HoldAll_{\ShapeEmbedding(\Manifold)}$ on the entire domain $\HoldAll$.
Smoothness of $f^\HoldAll_{\ShapeEmbedding(\Manifold)}$ is only demanded for the inner $\HoldAll^{\text{in}}_\ShapeEmbedding$ and outside $\HoldAll^{\text{out}}_\ShapeEmbedding$ partitioned by $\ShapeEmbedding(\Manifold)$.

Now we derive the pre-shape material derivative $\PrShpDeriv_m(f^\HoldAll_{\ShapeEmbedding(\Manifold)})[V]$ for directions $H^1_{\partial\HoldAll}$. 
These directions are not forced to vanish on the shape $\ShapeEmbedding(\Manifold)$, which is needed to assemble combined gradients systems with $V$ acting as test functions.
This poses a difficulty in its derivation, the partitioning depends on the pre-shape $\ShapeEmbedding\in\operatorname{Emb}(\Manifold,\HoldAll)$, but not on $\VolumeEmbedding\in\operatorname{Diff}_{\partial\HoldAll\cup\ShapeEmbedding(\Manifold)}(\HoldAll)$.
Let us fix a $\ShapeEmbedding\in\operatorname{Emb}(\Manifold,\HoldAll)$ and compute on the outer domain $\HoldAll^\text{out}_\ShapeEmbedding$ with pre-shape calculus rules from \cite{luft2020pre}.
In this computation we write $\HoldAll^\text{out}$ instead of   $\HoldAll^\text{out}_\ShapeEmbedding$ for readability.
\begin{align}
\begin{split}
\PrShpDeriv_m(f^\HoldAll_{\ShapeEmbedding(\Manifold)})[V]_{\vert \HoldAll^\text{out}} 
=&\;
\PrShpDeriv_m\Bigg(\frac{\int_{\HoldAll^{\text{out}}}g^{\HoldAll}\;\diff x}{\int_{\HoldAll^{\text{out}}}\RieszEnergyExtForce^\HoldAll\;\diff x}\cdot \RieszEnergyExtForce^\HoldAll \Bigg)[V] \\
= &\;\frac{\RieszEnergyExtForce^\HoldAll}{\int_{\HoldAll^\text{out}}\RieszEnergyExtForce^\HoldAll\;\diff x}
\cdot
\int_{\HoldAll^\text{out}}\big(\PrShpDeriv(g^\HoldAll)[V] + \nabla (g^\HoldAll)^T V
+ \operatorname{div}(V)\cdot g^\HoldAll\big)\;\diff x  \\
& -\frac{\int_{\HoldAll^\text{out}}g^{\HoldAll}\;\diff x}{\big(\int_{\HoldAll^\text{out}}\RieszEnergyExtForce^\HoldAll\;\diff x\big)^2}
\cdot\RieszEnergyExtForce^\HoldAll\cdot
\int_{\HoldAll^\text{out}}\big(\PrShpDeriv(\RieszEnergyExtForce^\HoldAll)[V] + \nabla (\RieszEnergyExtForce^\HoldAll)^T V
+ \operatorname{div}(V)\cdot \RieszEnergyExtForce^\HoldAll\big)\;\diff x  \\
&+ 
\frac{\int_{\HoldAll^\text{out}}g^{\HoldAll}\;\diff x}{\int_{\HoldAll^\text{out}}\RieszEnergyExtForce^\HoldAll\;\diff x} 
\cdot 
\Big(\PrShpDeriv(\RieszEnergyExtForce^\HoldAll)[V] + \nabla (\RieszEnergyExtForce^\HoldAll)^T V\Big)	\\
= &\;\frac{\RieszEnergyExtForce^\HoldAll}{\int_{\HoldAll^\text{out}}\RieszEnergyExtForce^\HoldAll\;\diff x}
\cdot
\int_{\HoldAll^\text{out}}\operatorname{div}\big(g^\HoldAll \cdot V\big)\;\diff x  \\
& -\frac{\int_{\HoldAll^\text{out}}g^{\HoldAll}\;\diff x}{\big(\int_{\HoldAll^\text{out}}\RieszEnergyExtForce^\HoldAll\;\diff x\big)^2}
\cdot\RieszEnergyExtForce^\HoldAll\cdot
\int_{\HoldAll^\text{out}}\operatorname{div}\big(\RieszEnergyExtForce^\HoldAll\cdot V\big)\;\diff x  \\
&+ 
\frac{\int_{\HoldAll^\text{out}}g^{\HoldAll}\;\diff x}{\int_{\HoldAll^\text{out}}\RieszEnergyExtForce^\HoldAll\;\diff x} 
\cdot 
\nabla (\RieszEnergyExtForce^\HoldAll)^T V \\
=&\; \frac{\RieszEnergyExtForce^\HoldAll}{\int_{\HoldAll^\text{out}}\RieszEnergyExtForce^\HoldAll\;\diff x}
\cdot \int_{\partial\HoldAll\cup\ShapeEmbedding(\Manifold)} \Big( g^\HoldAll - \frac{\int_{\HoldAll^{\text{out}}}g^{\HoldAll}\;\diff x}{\int_{\HoldAll^{\text{out}}}\RieszEnergyExtForce^\HoldAll\;\diff x}\cdot \RieszEnergyExtForce^\HoldAll  \Big) \cdot \big\langle V, n_{\HoldAll^\text{out}}\big\rangle \;\diff s \\
&+ 
\frac{\int_{\HoldAll^\text{out}}g^{\HoldAll}\;\diff x}{\int_{\HoldAll^\text{out}}\RieszEnergyExtForce^\HoldAll\;\diff x} 
\cdot 
\nabla (\RieszEnergyExtForce^\HoldAll)^T V \\
=&\; -\frac{\RieszEnergyExtForce^\HoldAll}{\int_{\HoldAll^\text{out}}\RieszEnergyExtForce^\HoldAll\;\diff x}
\cdot \int_{\ShapeEmbedding(\Manifold)} \Big( g^\HoldAll - \frac{\int_{\HoldAll^{\text{out}}}g^{\HoldAll}\;\diff x}{\int_{\HoldAll^{\text{out}}}\RieszEnergyExtForce^\HoldAll\;\diff x}\cdot \RieszEnergyExtForce^\HoldAll  \Big) \cdot \big\langle V, n_{\ShapeEmbedding(\Manifold)}\big\rangle \;\diff s \\
&+ 
\frac{\int_{\HoldAll^\text{out}}g^{\HoldAll}\;\diff x}{\int_{\HoldAll^\text{out}}\RieszEnergyExtForce^\HoldAll\;\diff x} 
\cdot 
\nabla (\RieszEnergyExtForce^\HoldAll)^T V.
\end{split}
\end{align}
Here, $n_{\HoldAll^{\text{out}}}$ is the outer unit normal vector field on $\partial\HoldAll^\text{out} = \partial\HoldAll\cup\ShapeEmbedding(\Manifold)$, and $n_{\ShapeEmbedding(\Manifold)}$ is the outer unit normal vector field on $\ShapeEmbedding(\Manifold)$.
In particular, we used that $g^\HoldAll$ and $\RieszEnergyExtForce^\HoldAll$ do neither depend on $\VolumeEmbedding$ nor on $\ShapeEmbedding$, which lets their pre-shape derivatives vanish.
Also, we have applied Gauss' theorem and used  $V_{\partial\HoldAll}=0$.
Notice the change of sign for the first summand of the last equality, due to $n_{\HoldAll^{\text{out}}} = -n_{\ShapeEmbedding(\Manifold)}$ on $\ShapeEmbedding(\Manifold)$.
Analogous computation on the interior $\HoldAll^{\text{in}}$ with boundary $\partial\HoldAll^{\text{in}} = \ShapeEmbedding(\Manifold)$ give us the pre-shape material derivative 
\begin{equation}\label{Eq_Numerics_MaterialDerivVolTarg}
	\PrShpDeriv_m(f^\HoldAll_{\ShapeEmbedding(\Manifold)})[V] = \begin{cases}
		\frac{\RieszEnergyExtForce^\HoldAll}{\int_{\HoldAll^\text{in}}\RieszEnergyExtForce^\HoldAll\;\diff x}
		\cdot \int_{\ShapeEmbedding(\Manifold)} \Big( g^\HoldAll - \frac{\int_{\HoldAll^{\text{in}}}g^{\HoldAll}\;\diff x}{\int_{\HoldAll^{\text{in}}}\RieszEnergyExtForce^\HoldAll\;\diff x}\cdot \RieszEnergyExtForce^\HoldAll  \Big) \cdot \big\langle V, n_{\ShapeEmbedding(\Manifold)}\big\rangle \;\diff s \\
		\qquad\qquad\qquad\qquad\qquad\qquad+\;
		\frac{\int_{\HoldAll^\text{in}}g^{\HoldAll}\;\diff x}{\int_{\HoldAll^\text{in}}\RieszEnergyExtForce^\HoldAll\;\diff x} 
		\cdot 
		\nabla (\RieszEnergyExtForce^\HoldAll)^T V
		\qquad \;\;\,\text{ for } x \in \HoldAll^{\text{in}}\\
		-\frac{\RieszEnergyExtForce^\HoldAll}{\int_{\HoldAll^\text{out}}\RieszEnergyExtForce^\HoldAll\;\diff x}
		\cdot \int_{\ShapeEmbedding(\Manifold)} \Big( g^\HoldAll - \frac{\int_{\HoldAll^{\text{out}}}g^{\HoldAll}\;\diff x}{\int_{\HoldAll^{\text{out}}}\RieszEnergyExtForce^\HoldAll\;\diff x}\cdot \RieszEnergyExtForce^\HoldAll  \Big) \cdot \big\langle V, n_{\ShapeEmbedding(\Manifold)}\big\rangle \;\diff s \\
		\qquad\qquad\qquad\qquad\qquad\qquad+\; 
		\frac{\int_{\HoldAll^\text{out}}g^{\HoldAll}\;\diff x}{\int_{\HoldAll^\text{out}}\RieszEnergyExtForce^\HoldAll\;\diff x} 
		\cdot 
		\nabla (\RieszEnergyExtForce^\HoldAll)^T V
		\qquad\;\, \text{ for } x \in \overline{\HoldAll^{\text{out}}}.
	\end{cases}
\end{equation}

This pre-shape material derivative is interesting from a theoretical perspective, since it is an example of a derivative depending on the shape of a submanifold $\ShapeEmbedding(\Manifold)\subset\HoldAll$, where the actual pre-shape at hand $\HoldAll$ is of different dimension.
Also, we see that the sign of boundary integral on $\ShapeEmbedding(\Manifold)$ depends on whether the inside or outside of $\HoldAll$ is regarded.
This nicely reflects that changing $\ShapeEmbedding(\Manifold)$ adds volume on one side and takes it away from the other.
We remind the reader that normal directions $n_{\ShapeEmbedding(\Manifold)}$ are not normal directions corresponding to the shape of $\HoldAll$.
They rather lie in the interior of $\HoldAll$, and hence are part of the fiber or tangential component of $\operatorname{Diff}(\HoldAll)=\operatorname{Emb}(\HoldAll, \HoldAll)$.

\subsubsection{Pre-Shape Gradient Systems}\label{SubSubSection_NumericsGradientSystems}
To compute pre-shape gradients $U$ we need suitable bilinear forms $a(.,.)$.
The systems for our gradients are always of form
\begin{align}\label{Eq_Numerics_GradientSystemAbstract}
\begin{split}
	a(U,V) &= \operatorname{RHS}(\ShapeEmbedding, \VolumeEmbedding)[V] \quad \forall H^1_{\partial\HoldAll}(\HoldAll, \R^{n+1}) \\
	U &= \operatorname{BC} \qquad\quad \text {on } \partial\HoldAll
\end{split}
\end{align}
In our numerical implementations, we test two bilinear forms and four different right hand sides.
We abbreviate the right hand sides by $\operatorname{RHS}(\ShapeEmbedding, \VolumeEmbedding)[V]$  depending on pre-shapes $\ShapeEmbedding\in\operatorname{Emb}(\Manifold, \HoldAll), \VolumeEmbedding\in\operatorname{Diff}_{\partial\HoldAll\cup\ShapeEmbedding(\Manifold)}(\HoldAll)$ and test functions $V$, and boundary conditions by $\operatorname{BC}$.
First, we consider the weak formulation of the linear elasticity equation with zero first Lam\'e parameter as found in \cite{SchulzSiebenborn}
\begin{align}\label{Eq_MetricLinElas}
\begin{split}
\int_{\HoldAll}\mu\cdot\epsilon(U):\epsilon(V)\;\diff x &= \operatorname{RHS}(\ShapeEmbedding, \VolumeEmbedding)[V] \qquad \forall V\in H^1_0(\HoldAll, \mathbb{R}^{n+1})\\
\epsilon(U) &= \frac{1}{2}(\nabla U^T + \nabla U)\\
\epsilon(V) &= \frac{1}{2}(\nabla V^T + \nabla V)\\
U &= 0 \qquad \text{ on } \partial\HoldAll.
\end{split}	
\end{align}
As the second bilinear form, we consider the weak formulation of the vector valued $p$-Laplacian equation.
Since systems stemming from the $p$-Laplacian have the issue to be indefinite, we employ a standard regularization by adding a parameter $\varepsilon>0$.
To make a comparison with the linear elasticity \cref{Eq_MetricLinElas} viable, we use a local weighting $\mu: \HoldAll \rightarrow (0,\infty)$ in the bilinear form, which then is 
\begin{align}\label{Eq_MetricPLaplacian}
	\int_{\HoldAll}\mu\cdot\Big(\varepsilon^2 + \nabla U : \nabla U \Big)^{\frac{p}{2}-1}\cdot \nabla U:\nabla V\;\diff x &= \operatorname{RHS}(\ShapeEmbedding, \VolumeEmbedding)[V] \qquad \forall V\in H^1_0(\HoldAll, \mathbb{R}^{n+1}).
\end{align}
We chose the local weighting $\mu$ as the solution of Poisson problem
\begin{equation}\label{Eq_Numerics_LamePoisson}
\begin{split}
-\Delta \mu &= 0 \qquad \;\;\; \text{in } \HoldAll \\
\mu &= \mu_{\text{max}} \quad\, \text{on } \ShapeEmbedding(\Manifold) \\
\mu &= \mu_{\text{min}} \quad\; \text{on } \partial\HoldAll
\end{split}
\end{equation}
for $\mu_{\text{max}}, \mu_{\text{min}} > 0$.
In the context of linear elasticity \cref{Eq_MetricLinElas} it can be interpreted as the so-called second Lam\'e parameter.

\begin{remark}[Sufficiency of Linear Elements for Pre-Shape Regularization]
	In order to apply pre-shape regularization approaches presented in this work, it is completely sufficient to use continuous linear elements to represent involved functions.
	As we can see in the pre-shape derivative formulas  \cref{Eq_PreShapeDerTrackingTangential} and  \cref{Eq_PreShapeDerTrackingTangentialVol}, the highest order of featured derivatives is one.
	This is important for application in practice, since existing shape gradient systems do not require higher order elements for volume and shape mesh quality tracking.
	In particular, all following systems are built by using continuous first order elements in FEniCS.
\end{remark}

Next, we need the shape derivative of the PDE constrained tracking type shape optimization objective $\TargetShp$.
It can be derived by a Lagrangian approach using standard shape or pre-shape calculus rules, giving
\begin{align}
\begin{split}
\ShpDeriv\TargetShp(\ProjectionCanonical(\ShapeEmbedding))[V] = &
\int_{\HoldAll}-(y-\bar{y})\nabla \bar{y}^TV 
- \nabla y^T(\nabla V^T + \nabla V)\nabla p
+ \operatorname{div}(V)\Big(\frac{1}{2}(y-\bar{y})^2 + \nabla y^T \nabla p - r_{\ShapeEmbedding(\Manifold)} p\Big)\;\diff x.
\end{split}
\end{align}
Here, $p$ is the adjoint solving the adjoint system
\begin{align}\label{Eq_Numerics_AdjointPoisson}
\begin{split}
-\Delta p &= -(y-\bar{y}) \quad \text{in } \,\HoldAll \\
p &= 0 \qquad\qquad \text{ on } \partial\HoldAll.
\end{split}
\end{align}
It is straight forward to derive the shape derivative of the perimeter regularization $\TargetShp^{\text{Perim}}$, which takes the form
\begin{align}
\ShpDeriv\TargetShp^{\text{Perim}}(\ProjectionCanonical(\ShapeEmbedding))[V] = \int_{\ShapeEmbedding(\Manifold)}\operatorname{div}_\Gamma(V)\; \diff s,
\end{align}
where $\operatorname{div}_\Gamma(V)$ is the tangential divergence of $V$ on $\ShapeEmbedding(\Manifold)$.

In the following we give four right hand sides representing various (un-)regularized approaches to calculate pre-shape gradients.
They correspond to the unregularized shape gradient, the shape parameterization tracking regularized pre-shape gradient, the volume and parameterization tracking regularized pre-shape gradient, and the volume and parameterization tracking regularized pre-shape gradient with free tangential outer boundary.

For the \emph{unregularized shape gradient}, the right hand side of the gradient system \cref{Eq_Numerics_GradientSystemAbstract} takes the standard form
\begin{align}\label{Eq_Numerics_RHS_Unregularized}
\begin{split}
\operatorname{RHS}(\ShapeEmbedding, \VolumeEmbedding)[V]
=\;&
 \ShpDeriv\TargetShp(\ProjectionCanonical(\ShapeEmbedding))[V] + \nu\cdot\ShpDeriv\TargetShp^{\text{Perim}}(\ProjectionCanonical(\ShapeEmbedding))[V] \\
=\;& 
\int_{\HoldAll}-(y-\bar{y})\nabla \bar{y}^TV 
- \nabla y^T(\nabla V^T + \nabla V)\nabla p
+ \operatorname{div}(V)\Big(\frac{1}{2}(y-\bar{y})^2 + \nabla y^T \nabla p - r_{\ShapeEmbedding(\Manifold)} p\Big)\;\diff x \\
&+
\nu\cdot \int_{\ShapeEmbedding(\Manifold)}\operatorname{div}_\Gamma(V)\; \diff s
\qquad \forall V\in H^1_{\partial\HoldAll}(\HoldAll, \mathbb{R}^{n+1}).
\end{split}	
\end{align}
In this case, the respective boundary condition for the gradient system is simply a Dirichlet zero condition $\operatorname{BC} = 0$.

Next, we give the right hand side for the \emph{shape parameterization regularized pre-shape gradient}.
For shape parameterization tracking, we employ a target $f_\ShapeEmbedding$ given by a globally defined function $\RieszEnergyExtForce: \HoldAll \rightarrow (0,\infty)$ (cf. \cref{Eq_Numerics_FTargetShape}), which in combination yields
\begin{align}\label{Eq_Numerics_RHS_ShapeRegu}
\begin{split}
\operatorname{RHS}(\ShapeEmbedding, \VolumeEmbedding)[V]
=\;&
\ShpDeriv\TargetShp(\ProjectionCanonical(\ShapeEmbedding))[V] + \nu\cdot\ShpDeriv\TargetShp^{\text{Perim}}(\ProjectionCanonical(\ShapeEmbedding))[V] + \alpha^\tau\cdot\langle g_\ShapeEmbedding^\TangentSpaceShape, V\rangle \\
=\;& 
\int_{\HoldAll}-(y-\bar{y})\nabla \bar{y}^TV 
- \nabla y^T(\nabla V^T + \nabla V)\nabla p
+ \operatorname{div}(V)\Big(\frac{1}{2}(y-\bar{y})^2 + \nabla y^T \nabla p - r_{\ShapeEmbedding(\Manifold)} p\Big)\;\diff x \\
&+
\nu\cdot \int_{\ShapeEmbedding(\Manifold)}\operatorname{div}_\Gamma(V)\; \diff s \\
&-
\int_{\ShapeEmbedding(\Manifold)} \frac{1}{2}\cdot\Bigg(\big(g^\Manifold\circ\ShapeEmbedding^{-1}\cdot \det D^\tau \ShapeEmbedding^{-1}\big)^2 - \Big(\frac{\int_{\Manifold}g^{\Manifold}\;\diff s}{\int_{\ShapeEmbedding(\Manifold)}\RieszEnergyExtForce\;\diff s}\cdot \RieszEnergyExtForce\Big)^2\Bigg)
\cdot\operatorname{div}_\Shape (V - \langle V, n \rangle \cdot n) 
\\
&\qquad\qquad\quad+
\Big(g^\Manifold\circ\ShapeEmbedding^{-1}\cdot \det D^\tau \ShapeEmbedding^{-1} - \frac{\int_{\Manifold}g^{\Manifold}\;\diff s}{\int_{\ShapeEmbedding(\Manifold)}\RieszEnergyExtForce\;\diff s}\cdot \RieszEnergyExtForce\Big)
\cdot
\frac{\int_{\Manifold}g^{\Manifold}\;\diff s}{\int_{\ShapeEmbedding(\Manifold)}\RieszEnergyExtForce\;\diff s}\cdot\nabla_\Shape \RieszEnergyExtForce^T V \;\diff s
\qquad \forall V\in H^1_{\partial\HoldAll}(\HoldAll, \mathbb{R}^{n+1}).
\end{split}	
\end{align}
The boundary condition is Dirichlet zero $\operatorname{BC} = 0$.
In order to assemble the shape regularization $\langle g_\ShapeEmbedding^\TangentSpaceShape, V\rangle$, it is necessary to compute the tangential Jacobian $\operatorname{det}D^\tau \ShapeEmbedding^{-1}$.
In applications, this means storing the vertex coordinates becomes of the initial shape is necessary.
Then $\ShapeEmbedding^{-1}$ can be calculated simply as the difference of current shape node coordinates to the initial ones.
Hence there is no need to invert matrices to calculate $D^\tau\ShapeEmbedding^{-1}$.
We give a strong reminder that $D^\tau$ is the covariant derivative, and must not be confused with the tangential derivative (cf. \cite[Ex. 2]{luft2020pre}).
In the case of an $n$-dimensional manifold $\ShapeEmbedding(\Manifold$, the covariant derivative is a $n\times n$-matrix, whereas the tangential derivative is $(n+1)\times (n+1)$.
The use of covariant derivatives requires to calculate local orthonormal frames, which can be done by standard Gram-Schmidt algorithms.
Knowing this, the computation of Jacobian determinants is inexpensive, since matrices from applications are of size smaller $3\times 3$.

Building on \cref{Eq_Numerics_RHS_ShapeRegu}, we can construct the right hand side for the \emph{shape and volume parameterization regularized pre-shape gradient}.
For this, we use a volume tracking target $f^\HoldAll_{\ShapeEmbedding(\Manifold)}$ defined by a field $\RieszEnergyExtForce^\HoldAll: \HoldAll \rightarrow (0,\infty)$ (\cref{Eq_Numerics_FTargetVol}). 
This finally gives
\begin{align}\label{Eq_Numerics_RHS_VolShapeRegu}
\hspace{-3.7cm}
\begin{split}
&\qquad\qquad\qquad\operatorname{RHS}(\ShapeEmbedding, \VolumeEmbedding)[V]
=\;
\ShpDeriv\TargetShp(\ProjectionCanonical(\ShapeEmbedding))[V] + \nu\cdot\ShpDeriv\TargetShp^{\text{Perim}}(\ProjectionCanonical(\ShapeEmbedding))[V] + \alpha^\tau\cdot\langle g_\ShapeEmbedding^\TangentSpaceShape, V\rangle 
+
\alpha^\HoldAll \PrShpDeriv\TargetPrShp^\HoldAll(\VolumeEmbedding)[V]\\
=\;& 
\int_{\HoldAll}-(y-\bar{y})\nabla \bar{y}^TV 
- \nabla y^T(\nabla V^T + \nabla V)\nabla p
+ \operatorname{div}(V)\Big(\frac{1}{2}(y-\bar{y})^2 + \nabla y^T \nabla p - r_{\ShapeEmbedding(\Manifold)} p\Big)\;\diff x \\
&+
\nu\cdot \int_{\ShapeEmbedding(\Manifold)}\operatorname{div}_\Gamma(V)\; \diff s \\
&-
\int_{\ShapeEmbedding(\Manifold)} \frac{1}{2}\cdot\Bigg(\big(g^\Manifold\circ\ShapeEmbedding^{-1}\cdot \det D^\tau \ShapeEmbedding^{-1}\big)^2 - \Big(\frac{\int_{\Manifold}g^{\Manifold}\;\diff s}{\int_{\ShapeEmbedding(\Manifold)}\RieszEnergyExtForce\;\diff s}\cdot \RieszEnergyExtForce\Big)^2\Bigg)
\cdot\operatorname{div}_\Shape (V - \langle V, n \rangle \cdot n) 
\\
&\qquad\qquad\quad+
\Big(g^\Manifold\circ\ShapeEmbedding^{-1}\cdot \det D^\tau \ShapeEmbedding^{-1} - \frac{\int_{\Manifold}g^{\Manifold}\;\diff s}{\int_{\ShapeEmbedding(\Manifold)}\RieszEnergyExtForce\;\diff s}\cdot \RieszEnergyExtForce\Big)
\cdot
\frac{\int_{\Manifold}g^{\Manifold}\;\diff s}{\int_{\ShapeEmbedding(\Manifold)}\RieszEnergyExtForce\;\diff s}\cdot\nabla_\Shape \RieszEnergyExtForce^T V \;\diff s \\
&-
\int_{\HoldAll^\text{out}} \frac{1}{2}\cdot\Bigg(\big(g^\HoldAll\circ\VolumeEmbedding^{-1}\cdot \det D\VolumeEmbedding^{-1}\big)^2 - \Big(\frac{\int_{\HoldAll^\text{out}}g^\HoldAll\circ\VolumeEmbedding^{-1}\cdot \det D \VolumeEmbedding^{-1}\;\diff x}{\int_{\HoldAll^\text{out}}\RieszEnergyExtForce^\HoldAll\;\diff x} 
\cdot 
\RieszEnergyExtForce^\HoldAll\Big)^2\Bigg)
\cdot\operatorname{div}(\operatorname{Pr}_{H^1_{\partial\Manifold\cup\ShapeEmbedding(\Manifold)}}(V)) 
\\
&\qquad\qquad\quad+
\Bigg(g^\HoldAll\circ\VolumeEmbedding^{-1}\cdot \det D \VolumeEmbedding^{-1} - \frac{\int_{\HoldAll^\text{out}}g^\HoldAll\circ\VolumeEmbedding^{-1}\cdot \det D \VolumeEmbedding^{-1}\;\diff x}{\int_{\HoldAll^\text{out}}\RieszEnergyExtForce^\HoldAll\;\diff x} 
\cdot 
\RieszEnergyExtForce^\HoldAll\Bigg)
\cdot
\frac{\int_{\HoldAll^\text{out}}g^\HoldAll\circ\VolumeEmbedding^{-1}\cdot \det D \VolumeEmbedding^{-1}\;\diff x}{\int_{\HoldAll^\text{out}}\RieszEnergyExtForce^\HoldAll\;\diff x} 
\cdot 
\nabla (\RieszEnergyExtForce^\HoldAll)^T \operatorname{Pr}_{H^1_{\partial\Manifold\cup\ShapeEmbedding(\Manifold)}}(V) \;\diff x \\
&-
\int_{\HoldAll^\text{in}} \frac{1}{2}\cdot\Bigg(\big(g^\HoldAll\circ\VolumeEmbedding^{-1}\cdot \det D\VolumeEmbedding^{-1}\big)^2 - \Big(\frac{\int_{\HoldAll^\text{in}}g^\HoldAll\circ\VolumeEmbedding^{-1}\cdot \det D \VolumeEmbedding^{-1}\;\diff x}{\int_{\HoldAll^\text{in}}\RieszEnergyExtForce^\HoldAll\;\diff x} 
\cdot 
\RieszEnergyExtForce^\HoldAll\Big)^2\Bigg)
\cdot\operatorname{div}(\operatorname{Pr}_{H^1_{\partial\Manifold\cup\ShapeEmbedding(\Manifold)}}(V)) 
\\
&\qquad\qquad\quad+
\Bigg(g^\HoldAll\circ\VolumeEmbedding^{-1}\cdot \det D \VolumeEmbedding^{-1} - \frac{\int_{\HoldAll^\text{in}}g^\HoldAll\circ\VolumeEmbedding^{-1}\cdot \det D \VolumeEmbedding^{-1}\;\diff x}{\int_{\HoldAll^\text{in}}\RieszEnergyExtForce^\HoldAll\;\diff x} 
\cdot 
\RieszEnergyExtForce^\HoldAll\Bigg)
\cdot
\frac{\int_{\HoldAll^\text{in}}g^\HoldAll\circ\VolumeEmbedding^{-1}\cdot \det D \VolumeEmbedding^{-1}\;\diff x}{\int_{\HoldAll^\text{in}}\RieszEnergyExtForce^\HoldAll\;\diff x} 
\cdot 
\nabla (\RieszEnergyExtForce^\HoldAll)^T \operatorname{Pr}_{H^1_{\partial\Manifold\cup\ShapeEmbedding(\Manifold)}}(V) \;\diff x \\
&\hspace{15cm}\qquad \forall V\in H^1_{\partial\HoldAll}(\HoldAll, \mathbb{R}^{n+1}).
\end{split}	
\end{align}
The last two integrals correspond to the regularizer for volume parameterization tracking.
As in previous cases, the corresponding Dirichlet condition is given by $\operatorname{BC} = 0$.
All previous remarks on assembling the right and side are still valid.
Additionally, it is necessary to store coordinates of the entire initial hold-all domain.
With these, the volume pre-shape $\VolumeEmbedding^{-1}$ can be calculated as the difference of initial to current coordinates the volume mesh.
For volume regularization, calculation of Jacobian determinants $\operatorname{det}D\VolumeEmbedding^{-1}$ does not require local orthonormal frames via Gram-Schmidt algorithms, as no covariant derivatives are used.
It is very important to use a correct normalization for $\RieszEnergyExtForce^\HoldAll$ to ensure existence of solutions.
This is necessary, since in practical applications an optimization step leads to change of the underlying shape, and thus inner and outer components of $\HoldAll$.
Hence it is not enough to simply estimate $g^\HoldAll$ once in the beginning.
Either, $g^\HoldAll$ needs to be estimated by \cref{Eq_Numerics_gEstimation} in every iteration in which the shape of $\ShapeEmbedding(\Manifold)$ changes.
Or $g^\HoldAll$ is replaced by $g^\HoldAll\circ\VolumeEmbedding^{-1}\cdot \det D \VolumeEmbedding^{-1}$, which is motivated by the transformation rule.
We have decided for the latter, which can be seen in the last two terms of \cref{Eq_Numerics_RHS_VolShapeRegu}.
This also needs to be taken into account when calculating $\TargetPrShp^\HoldAll$, e.g. for line search.
As explained in \cref{SubSubSection_VolReguConsistency}, it is necessary to use $\operatorname{Pr}_{H^1_{\partial\Manifold\cup\ShapeEmbedding(\Manifold)}}(V)$ as directions for the volume regularization, if shapes are enforced to stay invariant.
The projection can be realized by setting the degrees of freedom of the finite element representation of $V$ to zero on the shape $\ShapeEmbedding(\Manifold)$.
This leads to vanishing of the first term of $\PrShpDeriv(f_{\ShapeEmbedding(\Manifold)}^\HoldAll)[V]$ (cf. \cref{Eq_Numerics_MaterialDerivVolTarg}), which does not occur in \cref{Eq_Numerics_RHS_VolShapeRegu}.

Lastly, the right hand side for a \emph{volume and parameterization tracking regularized pre-shape gradient with free tangential outer boundary} is given by \cref{Eq_Numerics_RHS_VolShapeRegu} as well.
However, instead of employing Dirichlet zero boundary conditions, we permit the boundary $\partial\HoldAll$ to move tangentially.
For this, we set 
\begin{equation}\label{Eq_Numerics_BCFreeTangOuter}
	\operatorname{BC}_\VolumeEmbedding = \alpha^{\partial\HoldAll} \cdot U_{L^2} \quad \text{ on } \partial\HoldAll,
\end{equation}
for a scaling factor $\alpha^{\partial\HoldAll}>0$.
Here, $U_{L^2}$ is the $L^2$-representation of tangential components of $\PrShpDeriv\TargetPrShp^\HoldAll(\VolumeEmbedding)$, i.e
\begin{equation}
	\int_{\partial\HoldAll}\langle U_{L^2},V\rangle\;\diff s = \PrShpDeriv\TargetPrShp^\HoldAll(\VolumeEmbedding)[V - \langle V, n_{\partial\HoldAll} \rangle \cdot V] \qquad \forall V\in L^2(\partial\HoldAll, \R^{n+1}).
\end{equation}
Notice that in practice, this does not require solution of a PDE on $\partial\HoldAll$, since the tangential values of $\PrShpDeriv\TargetPrShp^\HoldAll(\VolumeEmbedding)$ can be extracted directly from its finite element representation.
We remind the reader that this is more a heuristic approach, which we will refine in further works.

\subsection{Numerical Results and Comparison of Algorithms}\label{SubSubSection_NumericsResults}
In this subsection we explore computational results of employing unregularized and various pre-shape regularized gradient descents for shape optimization problem \cref{Eq_NumericalPoissonTrClassical}.
We propose an algorithm \ref{Algo_MainAlgoPreShape}, which is a modified gradient descent method with a backtracking line search featuring regularized gradients.
We present 7 implementations of pre-shape gradient descent methods.
The first 4 feature the linear elasticity metric \cref{Eq_MetricLinElas} with unregularized, shape regularized, volume and shape regularized, and volume and shape regularized free tangential outer boundary right hand sides.
The other 3 feature the regularized $p$-Laplacian metric \cref{Eq_MetricPLaplacian} with unregularized, shape regularized, volume and shape regularized right hand sides.
For the $p$-Laplacian metric we dismiss the free tangential outer boundary regularization, since solving it requires a modified Newton's method and slightly complicates our approach.
Both the linear elasticity metric \cref{Eq_MetricLinElas} and the regularized $p$-Laplacian metric \cref{Eq_MetricPLaplacian} involve a local weighting function $\mu$ stemming from \cref{Eq_Numerics_LamePoisson} inspired by \cite{SchulzSiebenborn}.
The two approaches for these metrics without any type of pre-shape regularization are denoted as their 'Vanilla' versions.
For implementations we use the open-source finite-element software FEniCS (cf. \cite{LoggMardalEtAl2012a, AlnaesBlechta2015a}).
Construction of meshes is done via the free meshing software Gmsh (cf. \cite{geuzaine2007gmsh}).
We perform our calculations using a single Intel(R) Core(TM) i5-3210M CPU @ 2.50GHz featuring 6 GB of RAM.

Algorithm \ref{Algo_MainAlgoPreShape} is essentially a steepest descent with a backtracking line search.
The regularization procedures for shape and volume mesh quality take place by modifying the right hand sides as described in \cref{SubSubSection_NumericsGradientSystems}.
However we want to pinpoint some important differences of algorithm \ref{Algo_MainAlgoPreShape} compared to a standard gradient descent for shape optimization.
First, notice that the initial mesh coordinates are stored in order to calculate $\ShapeEmbedding_k^{-1}$ and $\VolumeEmbedding_k^{-1}$.
This corresponds to setting initial pre-shapes $\VolumeEmbedding_0 = \operatorname{id}_{\HoldAll_0}$ and $\ShapeEmbedding_0 = \operatorname{id}_{\Manifold}$.
Since the current mesh coordinates are necessarily stored in a standard gradient descent, $\ShapeEmbedding_k^{-1}$ and $\VolumeEmbedding_k^{-1}$ are calculated as mesh coordinate differences.
Calculating these inverse embeddings amounts to a matrix difference operation, and therefore is of negligible computational burden.
Estimating initial vertex distributions $g^\Manifold$ and $g^\HoldAll$ needs to be done only once at the beginning of our routine.
Hence it does not contribute to computational cost in a significant way.
If shape regularization is partaking in the gradient system, it is necessary to compute and store local tangential orthonormal frames of the initial shape $\TangentVector_0$.
Together with calculation of local tangential orthonormal frames $\TangentVector_k$ for the current shape $\ShapeEmbedding_k(\Manifold)$, these are used to assemble the covariant Jacobian determinant for the regularized right-hand side of the gradient systems.
Since this need to be done for each new iterate $\ShapeEmbedding_k$, it indeed increases computational cost.
If required, this can be mitigated by parallel computing, since tangential orthonormal bases can be calculated simultaneously for all points $p\in \ShapeEmbedding_k(\Manifold)$.
Another difference to standard steepest descent methods concerns the condition of convergence in line $6$ of algorithm \ref{Algo_MainAlgoPreShape}.
It features two conditions, namely sufficient decrease in either the absolute or relative norm of the pre-shape gradient, and sufficient decrease of relative values for the original shape objetive $\TargetShp$.
We use this approach, since several objective functionals participate simultaneously in formation of pre-shape gradients $U_k$.
If shape or volume regularization take place, they influence the size of gradients depending on the mesh quality.
In order to compare different (un-)regularized gradient systems, we use this criterion to guarantee the same decrease of the original problem's objective for all strategies.
For the same reason, the line search checks for a sufficient decrease of the combined objective functionals matching the gradient regularizations.
In some sense, this is a weighted descent for multi criterial optimization, where the objectives are $\TargetShp$ and regularizations $\TargetPrShp^\tau$ and $\TargetPrShp^\HoldAll$.

Furthermore, we mention the difference of our two tested metrics $a(.,.)$ acting as left hand sides.
The linear elasticity metric \cref{Eq_MetricLinElas} leads to a linear system, which is solvable by use of standard techniques such the CG-method.
It is reported in \cite{hinze2021pLap}, that the $p$-Laplacian metric has particular advantages in resolution of sharp edges or kinks of optimal shapes.
Illustration of this is not the goal of this paper.
However, the $p$-Laplacian system \cref{Eq_MetricPLaplacian} is increasingly non-linear for larger $p\geq2$.
This significantly increases computational cost and burden of implementation, since Newton's method requires multiple linear system solves.
Also, systems are possibly indefinite if regularization parameter $\varepsilon>0$ is too small.
If chosen too large, we pay for positive definiteness by overregularizing the gradient systems.
In order to achieve convergence of Newton's method for the $p$-Laplacian, we use gradients from the previous shape optimization step as an initial guess.

\begin{remark}[Integrating Shape and Volume Regularization in Existing Solvers]
	Implementing shape and volume regularization with the pre-shape approach does not require a big overhead, if an existing solver for the shape optimization problem of concern is available.
	It solely requires accessibility of gradient systems \cref{Eq_Numerics_GradientSystemAbstract} and mesh morphing to update meshes and shapes.
	With this, adding regularization terms in style of \cref{Eq_Numerics_RHS_ShapeRegu} or \cref{Eq_Numerics_RHS_VolShapeRegu} to existing right hand sides is all that needs to be done.
	This does not affect the user's choice of preferred metrics $a(.,.)$ to represent gradients.
	We highlight this by implementing and comparing our regularizations for the linear elasticity and the non-linear $p$-Laplacian metrics. 
	From this perspective, algorithm \ref{Algo_MainAlgoPreShape} is only an in-depth explanation how right-hand side modifications of gradient systems are assembled.
\end{remark}

\DontPrintSemicolon
\LinesNumbered
\SetAlCapSkip{10pt}
\begin{algorithm2e}\caption{Simultaneous Shape and Volume Regularized Shape Optimization	\label{Algo_MainAlgoPreShape}}
	Set starting domain $\HoldAll_0$ and shape $\ShapeEmbedding_0(\Manifold)=\ProjectionCanonical(\ShapeEmbedding_0)$ and save according vertex coordinates for future computations\;
	Choose pre-shape regularizations by setting $\alpha^\tau, \alpha^{\HoldAll}\geq0$ \;
	Set shape and volume targets $\RieszEnergyExtForce, \RieszEnergyExtForce^\HoldAll: \HoldAll \rightarrow (0,\infty)$\;
	Estimate initial infinitesimal point distributions $g^\Manifold$ for $\ShapeEmbedding_0(\Manifold)$ and  $g^\HoldAll$ for $\HoldAll_0$ according to \cref{Eq_Numerics_gEstimation}\;
	Calculate local orthonormal tangential bases $\TangentVector_0(p)$ for each vertex of $p\in\ShapeEmbedding_0(\Manifold)$ using  Gram-Schmidt orthonormalization, and save them for future iterations\;
	\textbf{While} $\Big(\Vert U_k \Vert > \varepsilon_{\text{abs}}$ \text{ and } $\frac{\Vert U_k \Vert}{\Vert U_0 \Vert} > \varepsilon_{\text{rel}}\Big)$ \text{ or } $\frac{\TargetShp(\ProjectionCanonical(\ShapeEmbedding_k))}{\TargetShp(\ProjectionCanonical(\ShapeEmbedding_0))} > \varepsilon_{\text{rel}}^\TargetShp$ \textbf{do:} \;
	\Indp
	Assemble right-hand-side of pre-shape gradient system \cref{Eq_Numerics_GradientSystemAbstract}: \;
	\Indp
	solve for state solution $y_k$ via \cref{Eq_NumericalPoissonTrClassical}\;
	solve for adjoint solution $p_k$ via \cref{Eq_Numerics_AdjointPoisson} \;
	Calculate local orthonormal tangential bases $\TangentVector^{\ShapeEmbedding_k}$ for each vertex of $\ShapeEmbedding_k(\Manifold)$ with same orientation as $\TangentVector_0$ using  Gram-Schmidt orthonormalization \;
	\textbf{if $\;\;\;\alpha^\tau=0, \alpha^\HoldAll=0$:} 
	Assemble $\operatorname{RHS}(\ShapeEmbedding_k, \VolumeEmbedding_k)$ according to \cref{Eq_Numerics_RHS_Unregularized} \;
	\textbf{elif $\alpha^\tau\neq0, \alpha^\HoldAll=0$:} 
	Assemble $\operatorname{RHS}(\ShapeEmbedding_k, \VolumeEmbedding_k)$ according to \cref{Eq_Numerics_RHS_ShapeRegu} \;
	\textbf{elif $\alpha^\tau\neq0, \alpha^\HoldAll\neq0$:}  	Assemble $\operatorname{RHS}(\ShapeEmbedding_k, \VolumeEmbedding_k)$ according to \cref{Eq_Numerics_RHS_VolShapeRegu} \;
	\Indm
	Solve for pre-shape gradient $U_k$:\;
	\Indp
	Calculate local weighting parameters $\mu$ by solving  \cref{Eq_Numerics_LamePoisson} \;
	\textbf{if } linear elasticity: \;
	\Indp 
	Assemble left-hand-side $a(.,.)$ by \cref{Eq_MetricLinElas} and solve by preconditioned CG-method\;
	\Indm
	\textbf{elif } $p$-Laplacian: \;
	\Indp 
	Use preconditioned Newton's method to solve \cref{Eq_Numerics_GradientSystemAbstract} with left-hand-side $a(.,.)$ by \cref{Eq_MetricPLaplacian}\;	
	\Indm
	\Indm
	Perform a linesearch to get a sufficient descent direction $\tilde{U}_k$:\;
	\Indp
		$\tilde{U}_k \gets \frac{1}{\Vert U_k \Vert}\cdot U_k$ \\
		\textbf{while }$\TargetShp(\ProjectionCanonical(\ShapeEmbedding_k + \tilde{U}_k\circ\ShapeEmbedding_k)) + \alpha^\tau\cdot\TargetPrShp^\tau(\ShapeEmbedding_k + \tilde{U}_k\circ\ShapeEmbedding_k) + \alpha^\HoldAll\cdot\TargetPrShp^\HoldAll(\VolumeEmbedding_k + \tilde{U}_k\circ\VolumeEmbedding_k)$ \\
		$\qquad\qquad\geq  
			\TargetShp(\ProjectionCanonical(\ShapeEmbedding_k)) + \alpha^\tau\cdot\TargetPrShp^\tau(\ShapeEmbedding_k) + \alpha^\tau\cdot\TargetPrShp^\HoldAll(\VolumeEmbedding_k)$ \textbf{do:}\;
	\Indp
			$\tilde{U}_k \gets 0.5\cdot\tilde{U}_k$\;
	\Indm		
	\Indm
	Perform updates: \;
	\Indp
	$\ShapeEmbedding_{k+1} \gets \ShapeEmbedding_k + \tilde{U}_k\circ\ShapeEmbedding_k$ \;
	$\VolumeEmbedding_{k+1} \gets \VolumeEmbedding_k + \tilde{U}_k\circ\VolumeEmbedding_k$ \;
	\Indm
	\Indm
\end{algorithm2e} 

For a meaningful comparison of the $7$ mentioned approaches, we use the same parameters for the problem throughout.
Parameters for source term $r_{\ShapeEmbedding(\Manifold)}$ of the PDE constraint in \cref{Eq_Numerics_AdjointPoisson} are chosen as $r_1 = -1000$ and $r_2 = 1000$.
The scaling factor for perimeter regularization is $\nu = 0.00001$.
Parameters for calculating local weightings $\mu$ via \cref{Eq_Numerics_LamePoisson} are $\mu_{\text{max}} = 1$ and $\mu_\text{min} = 0.05$ for all approaches.
The stopping criteria for all routines tested remain the same.
Specifically, the tolerance for relative decrease of gradient norms is $\varepsilon_{\text{rel}} = 0.001$, absolute decrease of gradient norms $\varepsilon_{\text{abs}} = 0.00001$ and relative main objective decrease $\varepsilon^\TargetShp_{\text{rel}} = 0.0005$.
If shape regularization is employed, it is weighted with $\alpha^\tau = 1000$ and uses a constant target $\RieszEnergyExtForce \equiv 1$.
This targets a uniform distribution of surface cell volume of shapes.
For volume regularization, the weighting is $\alpha^\HoldAll = 100$ with a constant $\RieszEnergyExtForce^\HoldAll \equiv 1$ targeting uniform volume cells of the hold-all domain.
If we permit free tangential movement of the outer boundary \cref{Eq_Numerics_BCFreeTangOuter}, we choose a weighting parameter $\alpha^{\partial\HoldAll} = 250$.
In case of the $p$-Laplacian metric, we chose a parameter $p=6$.
Its regularization parameter is chosen as $\varepsilon=8$ for the unregularized, and shape and volume regularized routines.
If shape without volume regularization takes place, we had to increase regularization to $\varepsilon=9.5$.
This was necessary, since at some point lower values for $\varepsilon$ resulted in indefinite systems during descent with $p$-Laplacian gradients. 

We compare relative values of $\TargetShp$, $\TargetPrShp^\tau$ and $\TargetPrShp^\HoldAll$, which are illustrated in \cref{Fig_RPlots}.
Here, $\TargetPrShp^\tau$ is interpretable as the deviation of the shape mesh from a surface mesh with equidistant edges.
Similarly, $\TargetPrShp^\HoldAll$ can be understood as the deviation of the volume mesh from a volume mesh with uniform cell volumes.
Since a change of mesh coordinates leads to different qualities of solutions to the PDE constraint of \cref{Eq_NumericalPoissonTrClassical}, the regularizations do affect the original objective $\TargetShp$.
Hence, we also measure distance of shapes $\ShapeEmbedding_k(\Manifold)$ and the target shape by a Hadamard like distance function
\begin{equation}\label{Eq_Numerics_HadamardDistance}
	\operatorname{dist}(\ShapeEmbedding_k(\Manifold), N) := \int_{\ShapeEmbedding_k(\Manifold)} \underset{p\in N}{\operatorname{max}}\;\Vert s - p\Vert \; \diff s.
\end{equation}
This gives us a geometric value for convergence of our algorithms, complementing the value of objective functionals for our results.

\begin{figure}[h]
		\centering
	\begin{minipage}{.5\linewidth}
			\includegraphics[width=0.9\linewidth, height=5cm]{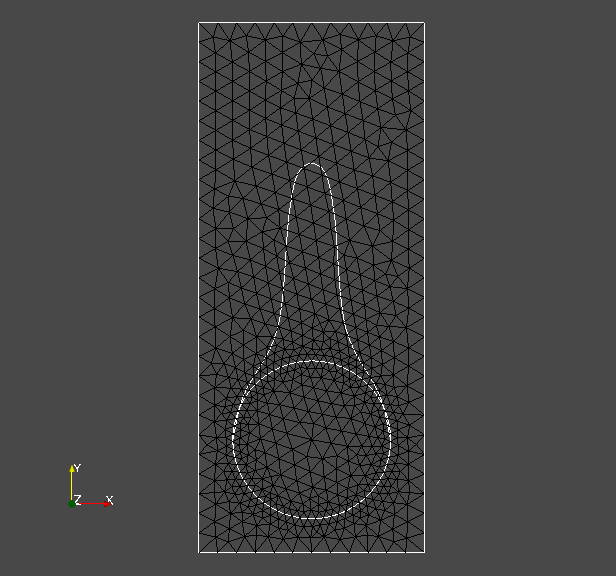}
	\end{minipage}
	\caption{\label{Fig_NumStartTargetBump3}
	Initial mesh $\HoldAll = [0, 1]\times[0, 2.35]$ with embedded initial shape $\Manifold = S^{0.35}_{(0.5, 0.5)}$. The bottle like target shape is included as well.}	
\end{figure}

\begin{table}
\hspace{-2.5cm}
\begin{tabular}{c|ccccccc}
	\toprule
	& LE Vanilla & LE Tang & LE VolTang & LE VolTang Free & $p$-L Vanilla & $p$-L Tang & $p$-L VolTang \\ \midrule
	total time & 49.0s & 345.4s & 199.9s & 320.7s & 135.4s & 316.5s & 325.8s \\[.3cm]
	avg. time step & 1.2s & 2.5s & 2.6s & 2.8s & 2.4s & 2.0s & 4.6s   \\[.3cm]
	number steps & 41 & 137 & 77 & 114 & 55 & 155 & 70   \\[.3cm]
	\bottomrule
\end{tabular}
	\caption{Total times, averaged times per step and number of steps for all 7 methods (for uniform stopping criteria see in the text above).}
\label{Table_NumericsTimes}
\end{table}

From \cref{Fig_RPlots} $(a)$ and $(b)$, we see that all $7$ methods are converging.
They all minimize the original shape objective $\TargetShp$, and the geometric mesh distance to the target shape (cf. \cref{Fig_NumStartTargetBump3}).
Seeing that mesh distance is minimized for all methods confirms that the optimal shape of original problem \cref{Eq_NumericalPoissonTrClassical} is left invariant by our pre-shape regularizations.
Also, convergence to the optimal shape is not affected by the choice of regularization and metric $a(.,.)$.
In \cref{Fig_RPlots} $(a)$ we also see that, given a fixed metric $a(.,.)$, the values of original target $\TargetShp$ for regularized routines vary only slightly from the unregularized one.
This means that intermediate shapes $\ShapeEmbedding_k(\Manifold)$ are left nearly invariant by all regularization approaches as well.
We witness that the $p$-Laplacian metric \cref{Eq_MetricPLaplacian} gives a pre-shape gradient with slightly slower convergence if compared to the linear elasticity \cref{Eq_MetricLinElas} for unregularized and all regularized variants. 
However, one should keep in mind that the shapes considered here have rather smooth boundary.
Notice that all gradients are normed in the line search of algorithm \ref{Algo_MainAlgoPreShape}, which permits this comparison.

In \cref{Table_NumericsTimes} we present the times for all optimization runs.
We see that the fastest method in both time and step count featured the unregularized approach with the linear elasticity metric.
Regularized approaches all need more steps for convergence, since the convergence condition features sufficient minimization of the gradient norms.
As shape and volume tracking objectives $\TargetPrShp^\tau$ and $\TargetPrShp^\HoldAll$ participate in this condition, the optimization routine continues to optimize for mesh quality, despite a sufficient reduction of the original target $\TargetShp$.
This can be verified in \cref{Fig_RPlots}.
Notice that additional volume regularization did not considerably increase average computational time per step for the linear elasticity approach.
The times for approaches featuring shape regularization can be improved by computing tangential orthonormal bases in parallel.
We relied on a rather inefficient but convenient calculation of these solving several projection problems using FEniCS.

From \cref{Table_NumericsTimes}, we see that the unregularized $p$-Laplacian approach for $p=6$ needs more steps to convergence compared to a linear elasticity gradient.
Average time per step is higher too, since Newton's method needs to be applied to solve the non-linear gradient system.
This approach needs careful selection of regularization parameter $\varepsilon>0$ for \cref{Eq_MetricPLaplacian}, since the mesh quality degrades quickly for our problem.
This makes calculation of gradients by Newton's method difficult, since conditioning of systems and indefiniteness at some point of the shape optimization routine are an issue.
If the shape regularized $p$-Laplacian gradient is compared to the unregularized $p$-Laplacian gradient, the computational times were slightly faster on average.
We amount this to faster convergence of the Newton method, since we needed to employ a higher regularization parameter $\varepsilon$ for the shape regularized routine.
This also explains longer computational times for the volume regularized $p$-Laplacian, since the same regularization parameter $\varepsilon$ as in the unregularized approach is permissible, but more Newton iterations are necessary.
Since the shape regularization takes place simultaneously with volume regularization, a lower permissible regularization $\varepsilon$ indeed shows that volume regularization improves condition of linear systems.

\begin{figure}[h]
	\begin{tabular}{cr}
		\begin{subfigure}{0.5\textwidth}
			\includegraphics[width=0.9\linewidth, height=5cm]{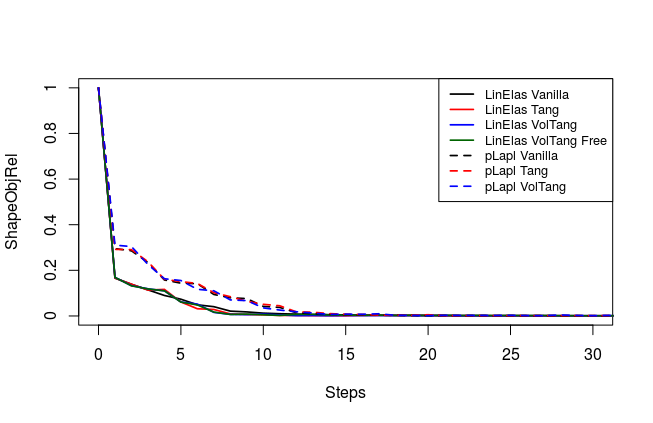}
			\subcaption{relative shape objective value $\frac{\TargetShp(\ProjectionCanonical(\ShapeEmbedding_i))}{\TargetShp(\ProjectionCanonical(\ShapeEmbedding_0))}$}
		\end{subfigure} 
		
		&\begin{subfigure}{0.5\textwidth}
			\includegraphics[width=0.9\linewidth, height=5cm]{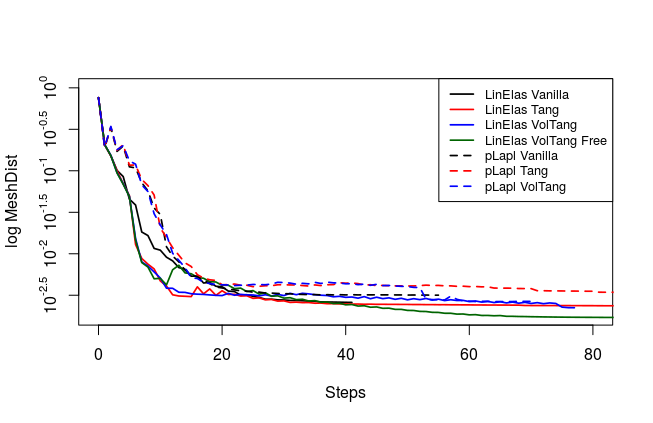}
			\subcaption{$\operatorname{log}$-mesh distance of $\ShapeEmbedding_i(\Manifold)$ and target shape}
		\end{subfigure} \\
		
		\begin{subfigure}{0.5\textwidth}
			\includegraphics[width=0.9\linewidth, height=5cm]{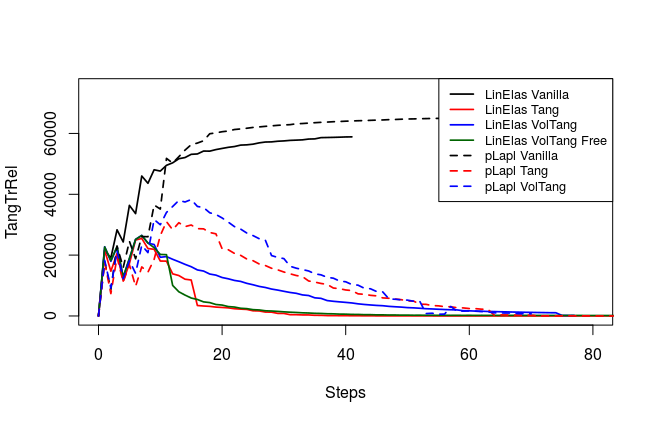}
			\subcaption{relative shape tracking value $\frac{\TargetPrShp^\tau(\ShapeEmbedding_i)}{\TargetPrShp^\tau(\ShapeEmbedding_0)}$}
		\end{subfigure}
		
		&\begin{subfigure}{0.5\textwidth}
			\includegraphics[width=0.9\linewidth, height=5cm]{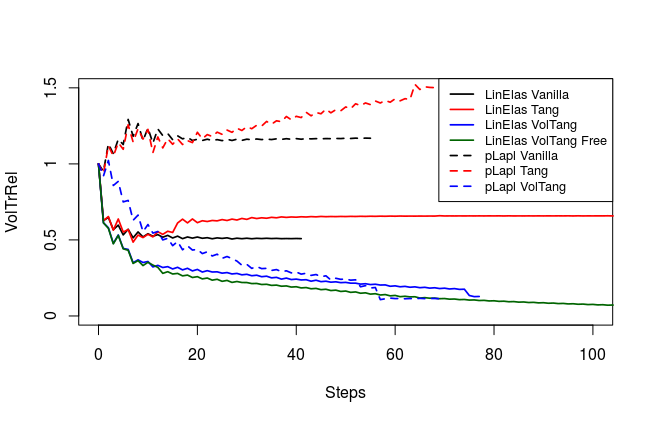}
			\subcaption{relative volume tracking value $\frac{\TargetPrShp^\HoldAll(\VolumeEmbedding_i)}{\TargetPrShp^\HoldAll(\VolumeEmbedding_0)}$}
		\end{subfigure}
	\end{tabular}
	\caption{\label{Fig_RPlots} Relative values for three objective functionals $\Target$, $\TargetPrShp^\tau$ and $\TargetPrShp^\HoldAll$ and logged mesh distance to target shape for gradient descents using 7 different (un-)regularized pre-shape gradients and metrics.}	
\end{figure}

To analyze quality of the shape mesh for all routines, we provide the relative value of the shape parameterization tracking target $\TargetPrShp^\tau$ in \cref{Table_NumericsTimes} $(c)$, as well as $g^\Manifold\circ\ShapeEmbedding^{-1}_k\circ\operatorname{det}D^\tau\ShapeEmbedding_k^{-1}$ for final shapes in \cref{Fig_gshape_LinElas} and \cref{Fig_gshape_pLapl}.
The relative value of the shape parameterization tracking target $\TargetPrShp^\tau$ in \cref{Table_NumericsTimes} $(c)$ measures deviation of the current shape mesh $\ShapeEmbedding_k(\Manifold)$ from a uniform surface mesh.
This means larger values indicate more non-uniformity of shape meshes.
The colors in \cref{Fig_gshape_LinElas} and \cref{Fig_gshape_pLapl} highlight variation of node densities on the shape meshes, where a uniform color indicates approximately equidistant surface nodes.
As the starting mesh seen in \cref{Fig_NumStartTargetBump3} is constructed via Gmsh, it features an approximately uniform surface mesh.
However, both for unregularized linear elasticity and $p$-Laplacian approaches, we see in \cref{Fig_RPlots} $(c)$ that $\TargetPrShp^\tau$ increases during optimization.
This means surface mesh quality deteriorates if no regularization takes place.
For final shapes, this is visualized in \cref{Fig_gshape_LinElas} $(a)$ and \cref{Fig_gshape_pLapl} $(a)$.
There we clearly see an expansion of cell volumes for the targeted bump at the top.
All other routines involve a shape quality regularization by $\TargetPrShp^\tau$.
In \cref{Table_NumericsTimes} $(c)$ it is visible that also for these routines, the deviation $\TargetPrShp^\tau$ from uniform surface meshes increases initially.
Once surface mesh quality becomes sufficiently bad, the shape parameterization takes effect and corrects quality until approximate uniformity is achieved.
We can clearly see this by convergence of $\TargetPrShp^\tau$ for all shape regularized methods in \cref{Table_NumericsTimes} $(c)$.
Also, we see an approximately uniform color of $g^\Manifold\circ\ShapeEmbedding^{-1}_k\circ\operatorname{det}D^\tau\ShapeEmbedding_k^{-1}$ for final shapes in \cref{Fig_gshape_LinElas} and \cref{Fig_gshape_pLapl}, which indicates a nearly equidistant surface mesh.
As a caveat, we see in \cref{Fig_gvol_LinElas} $(b)$ and \cref{Fig_gvol_pLapl} $(c)$ that shape without volume regularization decreases quality of the surrounding volume mesh.
This happens, since surface vertices are transported from areas with low volume at the bottom to areas with high volume at the top.
In case no volume regularization takes place, node coordinates from the hold-all domain are not corrected for this change.
Nevertheless, if a remeshing strategy is employed for shape optimization including shape regularization, the improved surface mesh quality leads to a superior remeshed domain. 
Such routines are an interesting subject for further works.

\begin{figure}[h]
	\begin{tabular}{cr}
		\begin{subfigure}{0.5\textwidth}
			\includegraphics[width=0.9\linewidth, height=5cm]{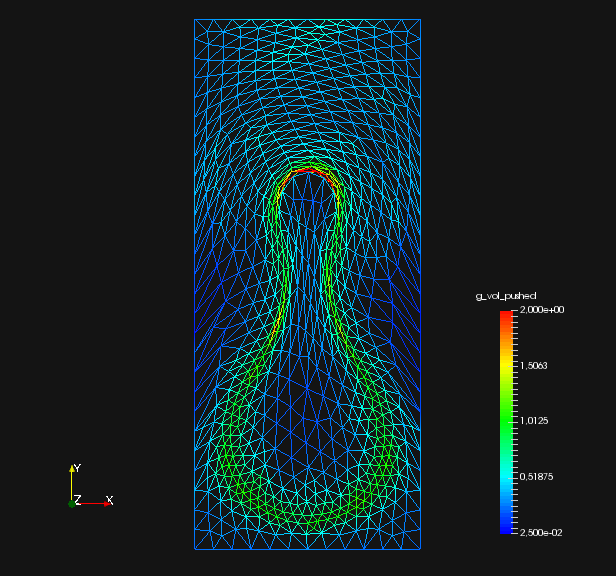}
			\subcaption{Linear elasticity without regularization}
		\end{subfigure} 
		
		&\begin{subfigure}{0.5\textwidth}
			\includegraphics[width=0.9\linewidth, height=5cm]{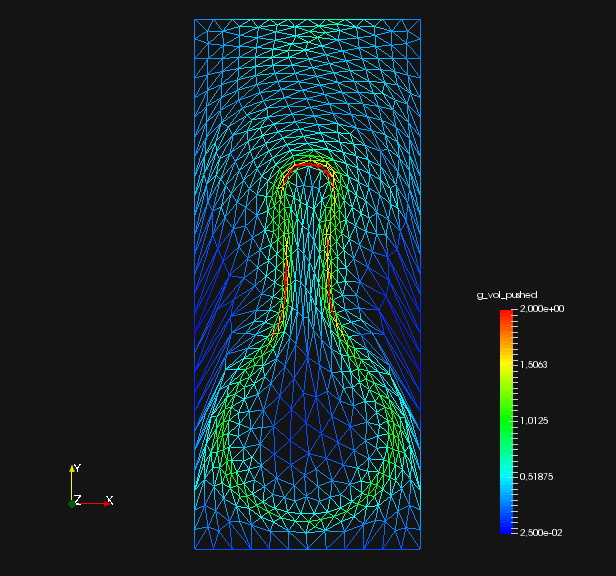}
			\subcaption{Linear elasticity with tangential parameterization tracking}
		\end{subfigure} \\
		
		\begin{subfigure}{0.5\textwidth}
			\includegraphics[width=0.9\linewidth, height=5cm]{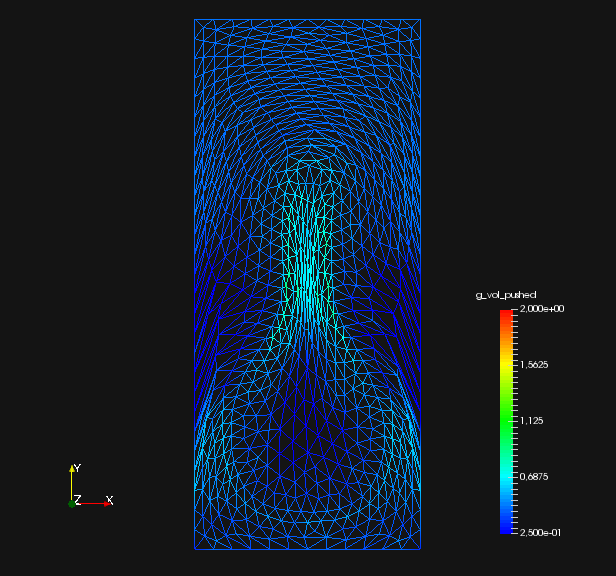}
			\subcaption{Linear elasticity with tangential and volume parameterization tracking}
		\end{subfigure}
		
		&\begin{subfigure}{0.5\textwidth}
			\includegraphics[width=0.9\linewidth, height=5cm]{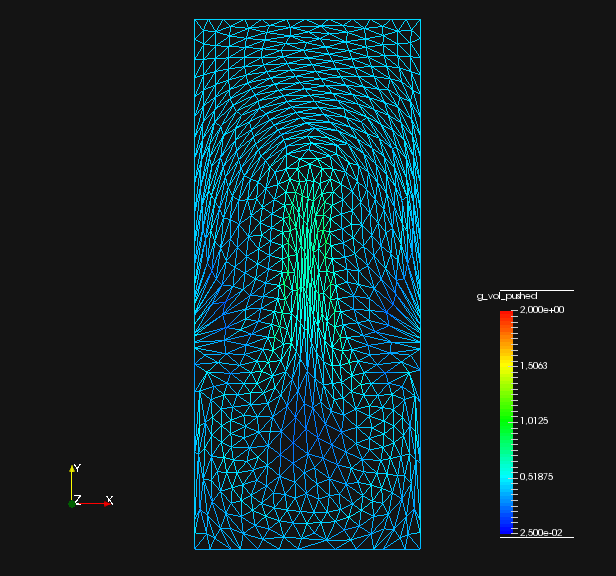}
			\subcaption{Linear elasticity with tangential and free outer boundary volume parameterization tracking}
		\end{subfigure}
	\end{tabular}
	\caption{\label{Fig_gvol_LinElas}Meshes of final steps of respective algorithms. Color depicts the value of $g^{\HoldAll}\circ \ShapeEmbedding^{-1}\cdot \det D\ShapeEmbedding^{-1}$, which is interpretable as the density of allocated volume mesh vertices. A more constant value corresponds to better volume mesh quality.}	
\end{figure}

\begin{figure}[h]
	\begin{tabular}{cr}
		\begin{subfigure}{0.5\textwidth}
			\includegraphics[width=0.9\linewidth, height=5cm]{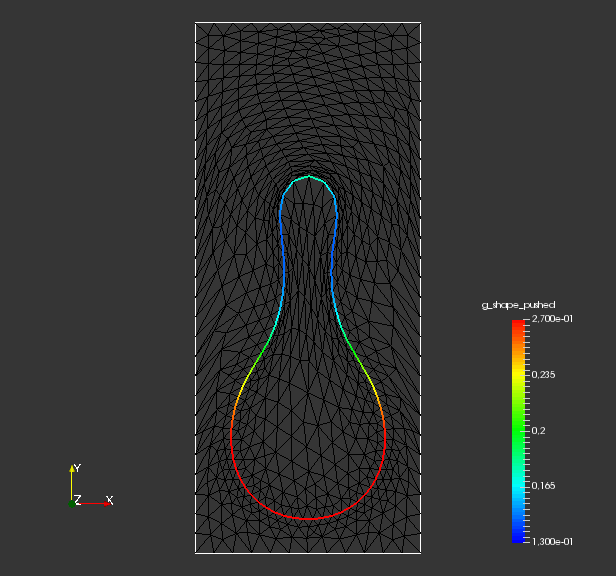}
			\subcaption{Linear elasticity without regularization}
		\end{subfigure} 
		
		&\begin{subfigure}{0.5\textwidth}
			\includegraphics[width=0.9\linewidth, height=5cm]{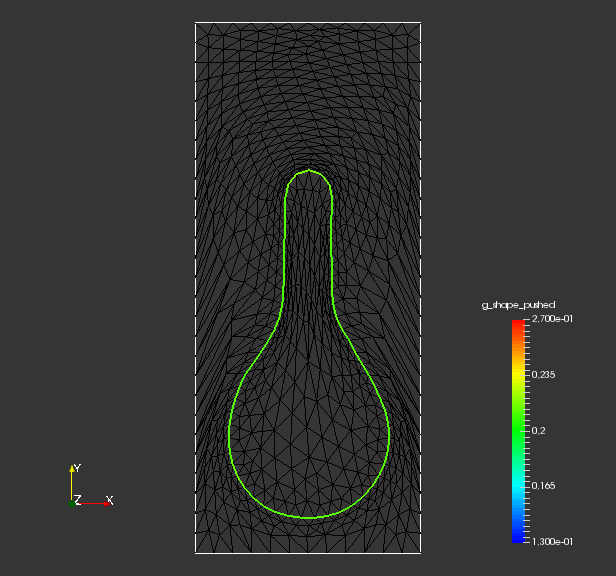}
			\subcaption{Linear elasticity with tangential parameterization tracking}
		\end{subfigure} \\
		
		\begin{subfigure}{0.5\textwidth}
			\includegraphics[width=0.9\linewidth, height=5cm]{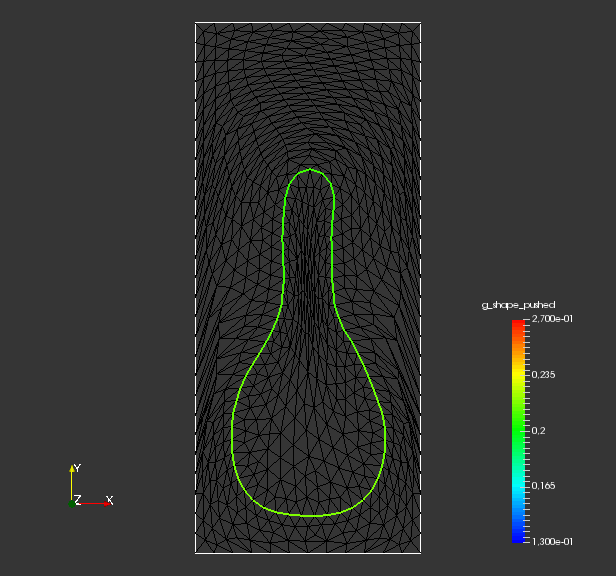}
			\subcaption{Linear elasticity with tangential and volume parameterization tracking}
		\end{subfigure}
		
		&\begin{subfigure}{0.5\textwidth}
			\includegraphics[width=0.9\linewidth, height=5cm]{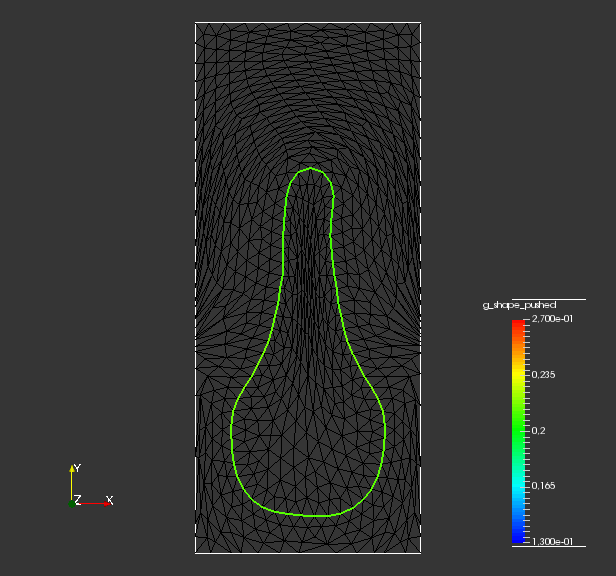}
			\subcaption{Linear elasticity with tangential and free outer boundary volume parameterization tracking}
		\end{subfigure}
	\end{tabular}
	\caption{\label{Fig_gshape_LinElas}Shape meshes of final steps of respective algorithms. Color depicts the value of $g^{\Manifold}\circ \ShapeEmbedding^{-1}\cdot \det D^ \tau\ShapeEmbedding^{-1}$, which is interpretable as the density of allocated shape mesh vertices. A more constant value corresponds to better shape mesh quality.}	
\end{figure}

In \cref{Fig_RPlots} $(d)$ relative values of the volume parameterization tracking functional $\TargetPrShp^\HoldAll$ are depicted for each routine and step.
We interpret these values as a measure for non-uniformity of the volume mesh $\HoldAll$.
The local density of volume vertices $g^\HoldAll\circ\VolumeEmbedding^{-1}\circ\operatorname{det}D\VolumeEmbedding^{-1}$ are visualizing this, and are depicted \cref{Fig_gvol_LinElas} and \cref{Fig_gvol_pLapl}.
For pictures zoomed at the upper tip of final shapes, we refer the reader to \cref{Fig_gvol_LinElas_zoomed} and \cref{Fig_gvol_pLapl_zoomed}.
From \cref{Fig_RPlots} $(d)$ we see that, both for linear elasticity and $p$-Laplacian metrics, non-volume regularized approaches have significantly higher value of $\TargetPrShp^\HoldAll$.
Values even increase for the $p$-Laplacian metric, while there is a slight decrease for the linear elasticity.
Notice that the initial mesh is locally refined near the shape $\Manifold$, which naturally increases the initial value of $\TargetPrShp^\HoldAll$ for a uniform target.
As already discussed, we see in \cref{Fig_RPlots} $(d)$ that shape regularized approaches reduce quality of the volume mesh even further compared to unregularized approaches.
The decrease of mesh quality especially visible in zoomed pictures \cref{Fig_gvol_LinElas_zoomed} and \cref{Fig_gvol_pLapl_zoomed} $(a)$ and $(b)$.
We see that for these approaches, volume cells near the shape are compressed to such an extent that their volumes nearly vanish.
Also, the cell volume distribution for unregularized and shape regularized approaches varies dramatically, which can be seen in \cref{Fig_gvol_LinElas} and \cref{Fig_gvol_pLapl} $(a)$ and $(b)$.
If volume regularization $\TargetPrShp^\HoldAll$ is applied, we see in \cref{Fig_RPlots} $(d)$ that convergence for $\TargetPrShp^\HoldAll$ takes place independent of the metric $a(.,.)$ being used.
This is apparent when looking at the volume mesh quality in \cref{Fig_gvol_LinElas} $(c)$ and $(d)$ and \cref{Fig_gvol_pLapl} $(c)$.
Further notice that in \cref{Fig_gvol_LinElas_zoomed} $(c)$ and $(d)$ and \cref{Fig_gvol_pLapl_zoomed} $(c)$ severe compression of cells neighboring the top of final shapes is avoided.
Volume cells inside the neck of final shapes are still more or less compressed for all approaches.
The interior cell volume cannot be transported through the shape, since it is forced to stay invariant.
Since the mesh topology is not changed during the optimization routine, there is also limited possibility to redistribute the cell volumes inside the shape. 
This situation could be remedied by cell fusion, edge swapping or remeshing strategies, which is beyond the scope of this article.
Finally, we want to highlight the difference of volume regularizations with and without free tangential outer boundary $\partial\HoldAll$.
If \cref{Fig_gvol_LinElas} $(c)$ and $(d)$ are compared, we see that the nodes on the outer boundary $\partial\HoldAll$ changed position for routine $(d)$.
Indeed, the cell volume distribution is more uniform for free outer boundary routine $(d)$, which is visualized by less variation of color.
This leads to even further increase of volume mesh quality, which can be pinpointed in \cref{Fig_RPlots} $(d)$.

\begin{figure}[h]
	\begin{tabular}{cr}
		\begin{subfigure}{0.5\textwidth}
			\includegraphics[width=0.9\linewidth, height=5cm]{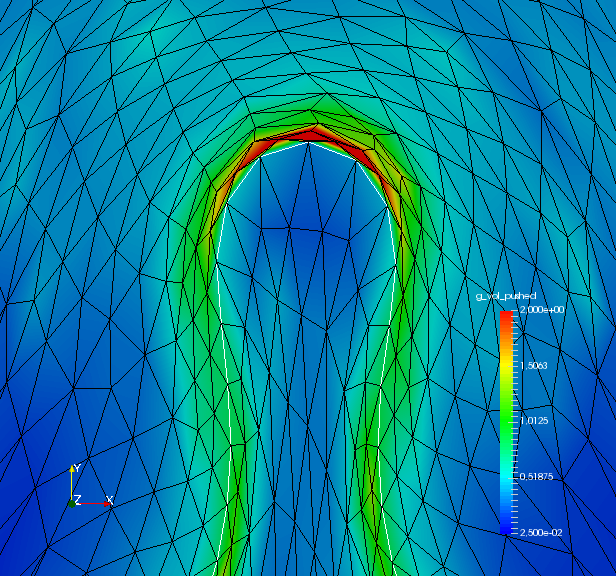}
			\subcaption{Linear elasticity without regularization}
		\end{subfigure} 
		
		&\begin{subfigure}{0.5\textwidth}
			\includegraphics[width=0.9\linewidth, height=5cm]{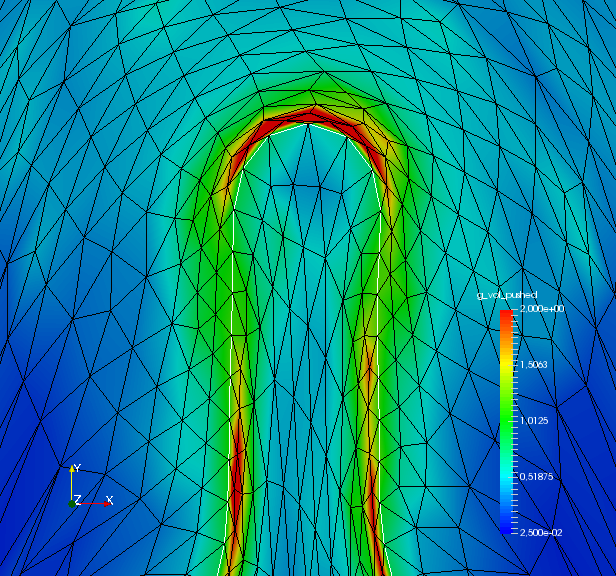}
			\subcaption{Linear elasticity with tangential parameterization tracking}
		\end{subfigure} \\
		
		\begin{subfigure}{0.5\textwidth}
			\includegraphics[width=0.9\linewidth, height=5cm]{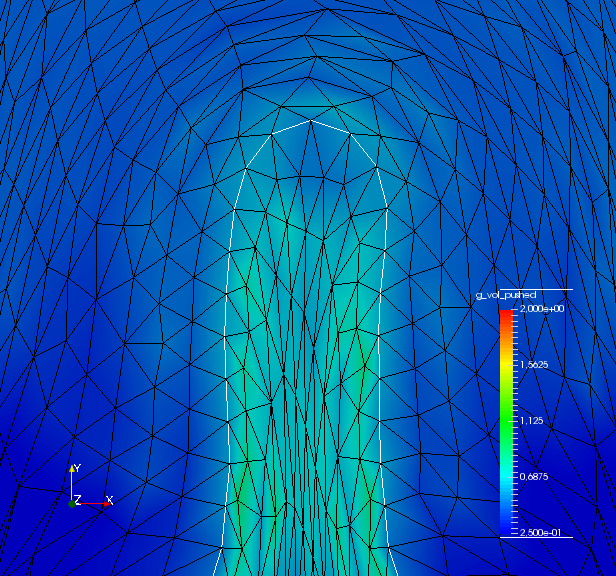}
			\subcaption{Linear elasticity with tangential and volume parameterization tracking}
		\end{subfigure}
		
		&\begin{subfigure}{0.5\textwidth}
			\includegraphics[width=0.9\linewidth, height=5cm]{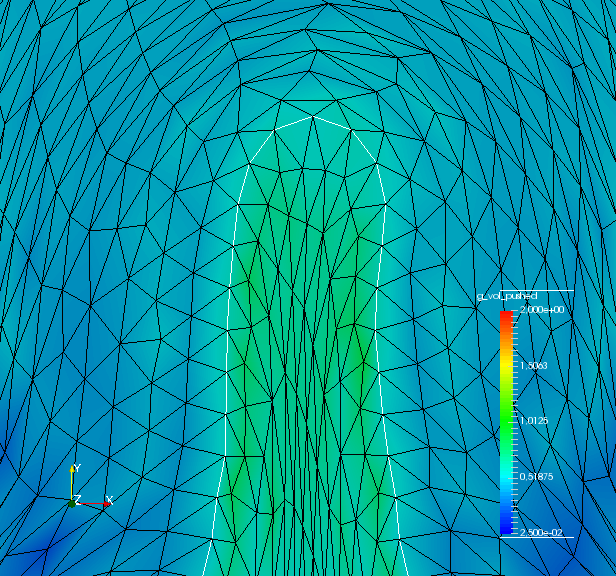}
			\subcaption{Linear elasticity with tangential and free outer boundary volume parameterization tracking}
		\end{subfigure}
	\end{tabular}
	\caption{\label{Fig_gvol_LinElas_zoomed}Meshes of final steps of respective algorithms. Color depicts the value of $g^{\HoldAll}\circ \ShapeEmbedding^{-1}\cdot \det D\ShapeEmbedding^{-1}$, which is interpretable as the density of allocated volume mesh vertices. A more constant value corresponds to better volume mesh quality.}	
\end{figure}

\begin{figure}[h]
	\begin{minipage}{0.5\linewidth}
		\centering
		\begin{subfigure}{1.\textwidth}
			\includegraphics[width=0.9\linewidth, height=5cm]{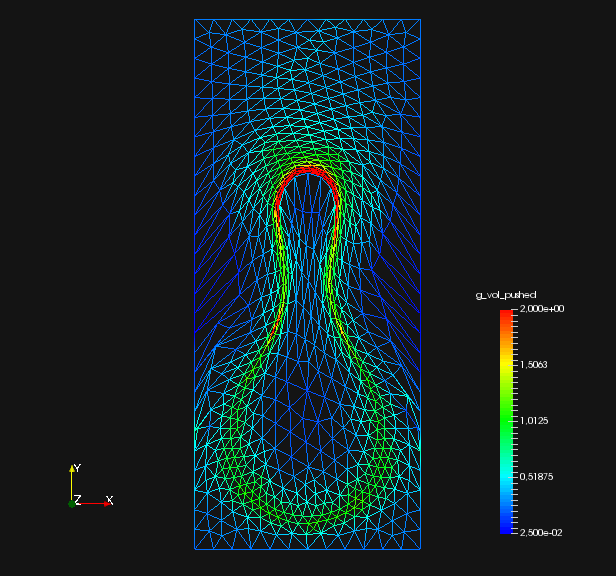}
			\subcaption{$p$-Laplacian without regularization}
		\end{subfigure} 
	\end{minipage}	
	\begin{minipage}{0.5\linewidth}
		\centering
		\begin{subfigure}{1.\textwidth}
			\includegraphics[width=0.9\linewidth, height=5cm]{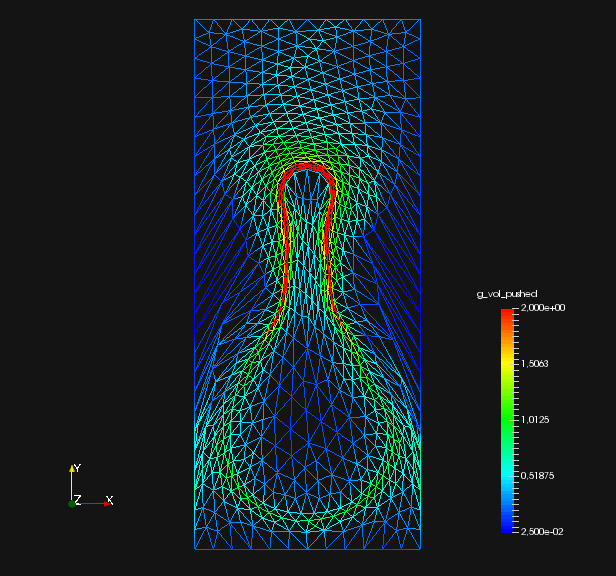}
			\subcaption{$p$-Laplacian with tangential parameterization tracking}
		\end{subfigure} 
	\end{minipage}\\[1ex]
	\begin{minipage}{\linewidth}
		\centering
		\begin{subfigure}{0.5\textwidth}
			\includegraphics[width=0.9\linewidth, height=5cm]{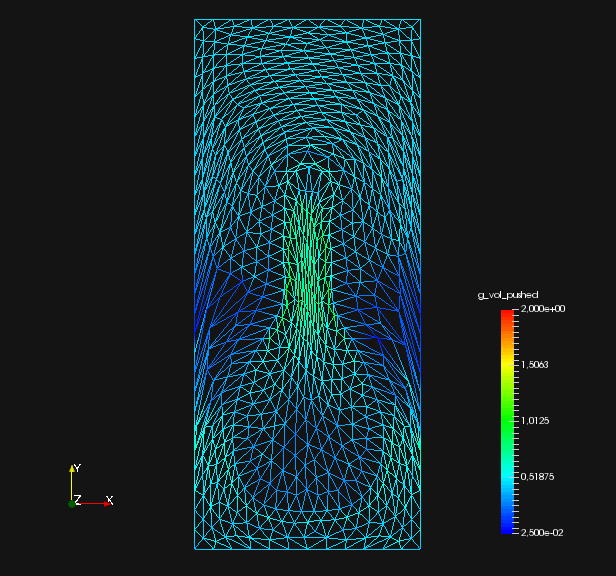}
			\subcaption{$p$-Laplacian with tangential and volume parameterization tracking}
		\end{subfigure}
	\end{minipage}
	\caption{\label{Fig_gvol_pLapl}Meshes of final steps of respective algorithms. Color depicts the value of $g^{\HoldAll}\circ \ShapeEmbedding^{-1}\cdot \det D\ShapeEmbedding^{-1}$, which is interpretable as the density of allocated volume mesh vertices. A more constant value corresponds to better volume mesh quality.}	
\end{figure}

\begin{figure}[h]
	\begin{minipage}{0.5\linewidth}
		\centering
		\begin{subfigure}{1.\textwidth}
			\includegraphics[width=0.9\linewidth, height=5cm]{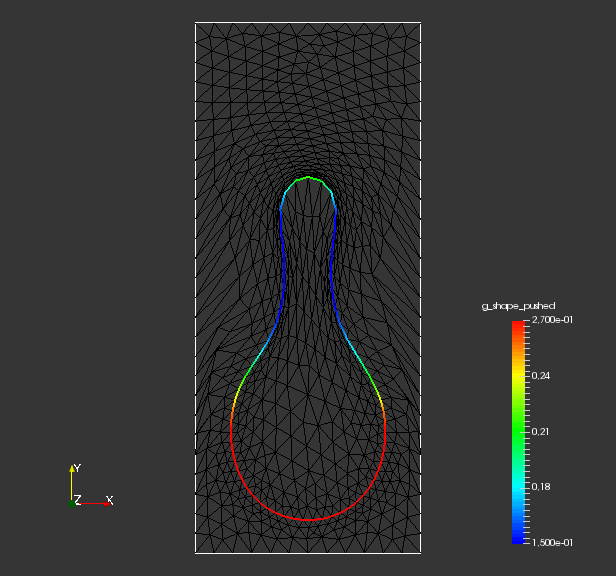}
			\subcaption{$p$-Laplacian without regularization}
		\end{subfigure} 
	\end{minipage}	
	\begin{minipage}{0.5\linewidth}
		\centering
		\begin{subfigure}{1.\textwidth}
			\includegraphics[width=0.9\linewidth, height=5cm]{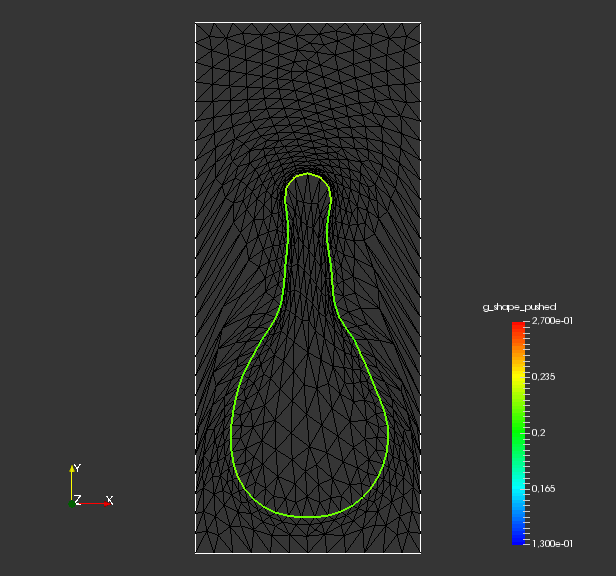}
			\subcaption{$p$-Laplacian with tangential parameterization tracking}
		\end{subfigure} 
	\end{minipage}\\[1ex]
	\begin{minipage}{\linewidth}
		\centering
		\begin{subfigure}{0.5\textwidth}
			\includegraphics[width=0.9\linewidth, height=5cm]{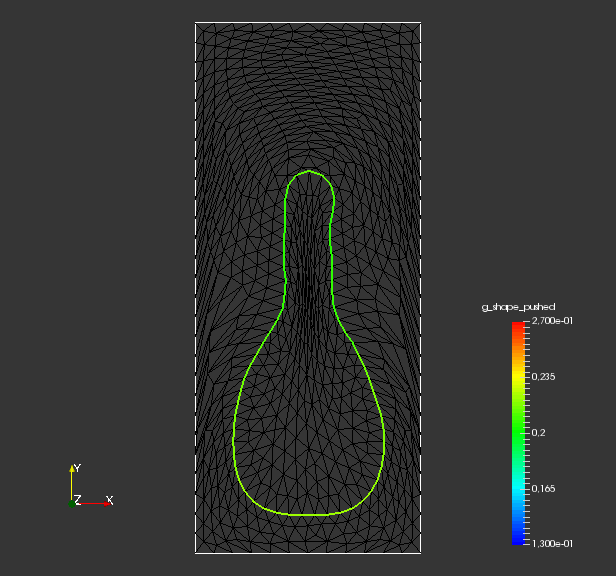}
			\subcaption{$p$-Laplacian with tangential and volume parameterization tracking}
		\end{subfigure}
	\end{minipage}
	\caption{\label{Fig_gshape_pLapl}Shape meshes of final steps of respective algorithms. Color depicts the value of $g^{\Manifold}\circ \ShapeEmbedding^{-1}\cdot \det D^ \tau\ShapeEmbedding^{-1}$, which is interpretable as the density of allocated shape mesh vertices. A more constant value corresponds to better shape mesh quality.}
\end{figure}

\begin{figure}[h]
	\begin{minipage}{0.5\linewidth}
		\centering
		\begin{subfigure}{1.\textwidth}
			\includegraphics[width=0.9\linewidth, height=5cm]{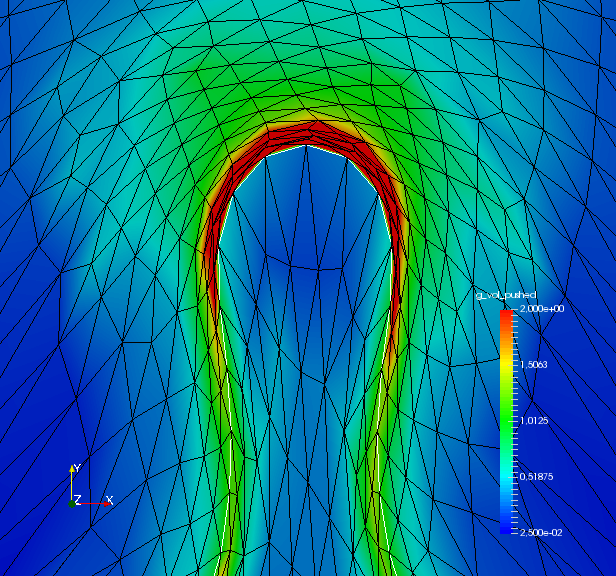}
			\subcaption{$p$-Laplacian without regularization}
		\end{subfigure} 
	\end{minipage}	
	\begin{minipage}{0.5\linewidth}
		\centering
		\begin{subfigure}{1.\textwidth}
			\includegraphics[width=0.9\linewidth, height=5cm]{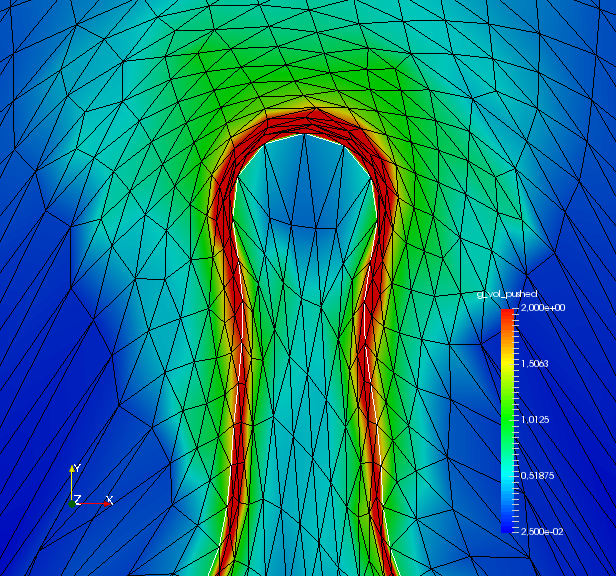}
			\subcaption{$p$-Laplacian with tangential parameterization tracking}
		\end{subfigure} 
	\end{minipage}\\[1ex]
	\begin{minipage}{\linewidth}
		\centering
		\begin{subfigure}{0.5\textwidth}
			\includegraphics[width=0.9\linewidth, height=5cm]{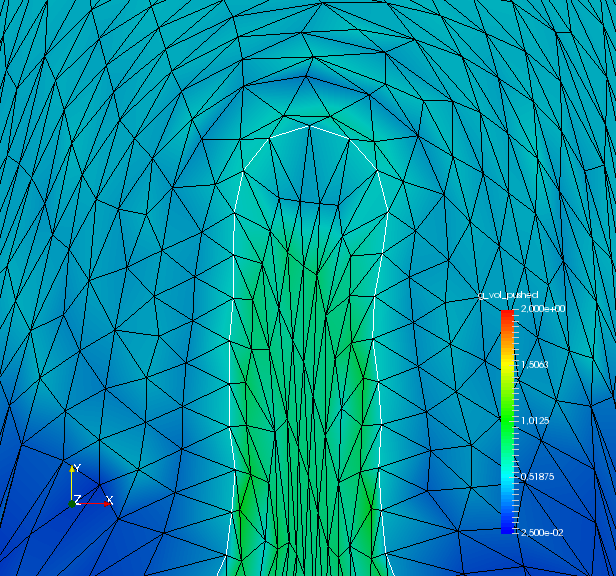}
			\subcaption{$p$-Laplacian with tangential and volume parameterization tracking}
		\end{subfigure}
	\end{minipage}
	\caption{\label{Fig_gvol_pLapl_zoomed}Meshes of final steps of respective algorithms. Color depicts the value of $g^{\HoldAll}\circ \ShapeEmbedding^{-1}\cdot \det D\ShapeEmbedding^{-1}$, which is interpretable as the density of allocated volume mesh vertices. A more constant value corresponds to better volume mesh quality.}	
\end{figure}

\section{Conclusion and Outlook}
In this work, we have provided several approaches to regularize general shape optimization problems to increase shape and volume mesh quality using pre-shape calculus.
Existence of regularized solutions and consistency of modified pre-shape gradient systems is guaranteed by several results for simultaneous shape and volume tracking.
With the presented gradient system modifications, our goal of leaving optimal shapes to the original problem invariant was achieved.
The computational burden is limited, since no additional solution of linear systems for regularized pre-shape gradients is necessary.
We also successfully implemented and compared our pre-shape gradient regularization approaches for linear elasticity and non-linear $p$-Laplacian metrics.

There are several possibilities to further develop pre-shape regularization approaches.
For one, non-constant targets $f$ can be designed to adapt mesh refinement non-uniformly.
In particular, mesh quality targets increasing solution quality of PDE constraints can be envisioned.
Also, we did not touch the topic of pre-shape Hessian, which could be of use to further increase effectiveness of regularization approaches.
Furthermore, a combination with discrete techniques, such as remeshing and edge swapping are possible as well.

\section*{Acknowledgements}
The authors would like to thank Michael Hinze (Koblenz University) and Martin Siebenborn (Universit\"at Hamburg) for a helpful and interesting discussion on the $p$-Laplacian metric.
This work has been supported by the BMBF (Bundesministerium f\"{u}r Bildung und Forschung) within the collaborative project GIVEN (FKZ: 05M18UTA).
Further, the authors acknowledge the support of the DFG research training group 2126 on algorithmic optimization.

\bibliographystyle{plain}
\bibliography{citations_ArXiv.bib} 

\end{document}